\title{On collection schemes and Gaifman's splitting theorem}
\author{Taishi Kurahashi\footnote{Email: kurahashi@people.kobe-u.ac.jp}
\footnote{Graduate School of System Informatics,
Kobe University,
1-1 Rokkodai, Nada, Kobe 657-8501, Japan.}
 and Yoshiaki Minami\footnote{Graduate School of System Informatics,
Kobe University,
1-1 Rokkodai, Nada, Kobe 657-8501, Japan.}}
\date{}
\theoremstyle{plain}
\newtheorem{thm}{Theorem}[section]
\newtheorem{lem}[thm]{Lemma}
\newtheorem{prop}[thm]{Proposition}
\newtheorem{cor}[thm]{Corollary}
\newtheorem{prob}[thm]{Problem}
\theoremstyle{definition}
\newtheorem{defn}[thm]{Definition}
\newtheorem{rem}[thm]{Remark}
\newcommand{\PA}{\mathbf{PA}}
\newcommand{\Coll}{\mathbf{Coll}}
\newcommand{\CS}{\Coll_{\mathrm{s}}}
\newcommand{\CSm}{\CS^-}
\newcommand{\Cm}{\Coll^-}
\newcommand{\Cd}{\Coll^{\mathrm{d}}}
\newcommand{\CW}{\Coll_{\mathrm{w}}}
\newcommand{\CWm}{\CW^-}
\newcommand{\CWd}{\CW^{\mathrm{d}}}
\newcommand{\LA}{\mathcal{L}_A}
\newcommand{\ElemDiag}{\mathrm{ElemDiag}}
\newcommand{\ee}{\subseteq_{\mathrm{end}}}
\newcommand{\cf}{\subseteq_{\mathrm{cof}}}
\newcommand{\dcf}{\prec_{\Delta_0}^{\mathrm{cof}}}
\newcommand{\cof}[1]{\mathsf{cof}_{#1}}
\newcommand{\COF}[1]{\mathsf{COF}_{#1}}
\newcommand{\eex}[1]{\mathsf{end}_{#1}}
\newcommand{\cofm}[1]{\mathsf{cof}_{#1}^{\equiv}}
\newcommand{\COFm}[1]{\mathsf{COF}_{#1}^{\equiv}}
\newcommand{\eexm}[1]{\mathsf{end}_{#1}^{\equiv}}
\newcommand{\coff}[1]{\mathsf{cof}_{#1}^{<}}
\newcommand{\COFf}[1]{\mathsf{COF}_{#1}^{<}}
\newcommand{\eexf}[1]{\mathsf{end}_{#1}^{<}}
\newcommand{\mc}[1]{\mathcal{#1}}
\begin{document}

\maketitle

\begin{abstract}
We study model theoretic characterizations of various collection schemes over $\mathbf{PA}^-$ from the viewpoint of Gaifman's splitting theorem. 
Among other things, we prove that for any $n \geq 0$ and $M \models \PA^-$, the following are equivalent: 
\begin{enumerate}
	\item $M$ satisfies the collection scheme for $\Sigma_{n+1}$ formulas.  
	\item For any $K, N \models \PA^-$, if $M \cf K$, $M \prec_{\Delta_0} K$ and $M \prec N$, then $M \prec_{\Sigma_{n+2}} K$ and $\sup_N(M) \prec_{\Sigma_n} N$.  
	\item For any $N \models \PA^-$, if $M \prec N$, then $M \prec_{\Sigma_{n+2}} \sup_N(M) \prec_{\Sigma_{n}} N$. 
\end{enumerate}
Here, $\sup_N(M)$ is the unique $K$ satisfying $M \cf K \ee N$. 
We also investigate strong collection schemes and parameter-free collection schemes from the similar perspective. 
\end{abstract}

\section{Introduction}

The language $\LA$ of first-order arithmetic consists of constant symbols $0$ and $1$, binary function symbols $+$ and $\times$, and binary relation symbol $<$. 
The $\LA$-theory of the non-negative parts of commutative discretely ordered rings is denoted by $\PA^-$ (Kaye \cite[Chapter 2]{Kaye}).

Let $\vec{v}$ denote a finite sequence of variables allowing the empty sequence. 
The following definition introduces some variations of the collection scheme, which have appeared in the literature so far.

\begin{defn}\label{colls}Let $\Gamma$ be a class of $\LA$-formulas. 
\begin{itemize}
	\item $\Coll(\Gamma)$ is the scheme
	\[
		\forall \vec{z} \, \forall \vec{u}\, \bigl(\forall \vec{x} < \vec{u}\, \exists \vec{y}\, \varphi(\vec{x}, \vec{y}, \vec{z}) \to \exists \vec{v}\, \forall \vec{x} < \vec{u}\, \exists \vec{y} < \vec{v}\, \varphi(\vec{x}, \vec{y}, \vec{z}) \bigr), \quad \varphi \in \Gamma. 
	\]
	\item $\Cd(\Gamma)$ is the scheme
	\[
		\forall \vec{u}\, \bigl(\forall \vec{z}\, \forall \vec{x} < \vec{u}\, \exists \vec{y}\, \varphi(\vec{x}, \vec{y}, \vec{z}) \to \forall \vec{z} \, \exists \vec{v}\, \forall \vec{x} < \vec{u}\, \exists \vec{y} < \vec{v}\, \varphi(\vec{x}, \vec{y}, \vec{z}) \bigr), \quad \varphi \in \Gamma. 
	\]
	\item $\Cm(\Gamma)$ is the scheme
	\[
		\forall \vec{u}\, \bigl(\forall \vec{x} < \vec{u}\, \exists \vec{y}\, \varphi(\vec{x}, \vec{y}) \to \exists \vec{v}\, \forall \vec{x} < \vec{u}\, \exists \vec{y} < \vec{v}\, \varphi(\vec{x}, \vec{y}) \bigr), \quad \varphi \in \Gamma. 
	\]
	\item $\CW(\Gamma)$ is the scheme
	\[
		\forall \vec{z} \, \bigl(\forall \vec{x} \, \exists \vec{y}\, \varphi(\vec{x}, \vec{y}, \vec{z}) \to \forall \vec{u}\, \exists \vec{v}\, \forall \vec{x} < \vec{u}\, \exists \vec{y} < \vec{v}\, \varphi(\vec{x}, \vec{y}, \vec{z}) \bigr), \quad \varphi \in \Gamma. 
	\]
	\item $\CWd(\Gamma)$ is the scheme
	\[
		\forall \vec{z}\, \forall \vec{x} \, \exists \vec{y}\, \varphi(\vec{x}, \vec{y}, \vec{z}) \to \forall \vec{z} \, \forall \vec{u}\, \exists \vec{v}\, \forall \vec{x} < \vec{u}\, \exists \vec{y} < \vec{v}\, \varphi(\vec{x}, \vec{y}, \vec{z}), \quad \varphi \in \Gamma. 
	\]
	\item $\CWm(\Gamma)$ is the scheme
	\[
		\forall \vec{x} \, \exists \vec{y}\, \varphi(\vec{x}, \vec{y}) \to \forall \vec{u}\, \exists \vec{v}\, \forall \vec{x} < \vec{u}\, \exists \vec{y} < \vec{v}\, \varphi(\vec{x}, \vec{y}), \quad \varphi \in \Gamma. 
	\]
	\item $\CS(\Gamma)$ is the scheme
	\[
		\forall \vec{z} \, \forall \vec{u}\, \exists \vec{v}\, \forall \vec{x} < \vec{u}\, \bigl(\exists \vec{y}\, \varphi(\vec{x}, \vec{y}, \vec{z}) \to \exists \vec{y} < \vec{v}\, \varphi(\vec{x}, \vec{y}, \vec{z}) \bigr), \quad \varphi \in \Gamma. 
	\]
	\item $\CSm(\Gamma)$ is the scheme
	\[
		\forall \vec{u}\, \exists \vec{v}\, \forall \vec{x} < \vec{u}\, \bigl(\exists \vec{y}\, \varphi(\vec{x}, \vec{y}) \to \exists \vec{y} < \vec{v}\, \varphi(\vec{x}, \vec{y}) \bigr), \quad \varphi \in \Gamma. 
	\]
\end{itemize}
\end{defn}

In the literature, the collection schemes have been usually considered together with some induction scheme. 
For a class $\Gamma$ of $\LA$-formulas, let $\mathbf{I}\Gamma$ denote the $\LA$-theory obtained from $\PA^-$ by adding the scheme of induction for formulas in $\Gamma$. 
Peano arithmetic $\PA$ is defined as the union of $\{\mathbf{I}\Sigma_n \mid n \geq 0\}$. 

The purpose of the present paper is to study model theoretic characterizations of these collection schemes over $\PA^-$ from the viewpoint of Gaifman's splitting theorem. 
For each $M, N \models \PA^-$ with $M \subseteq N$, let $\sup_N(M)$ be the unique $K \models \PA^-$ such that $M \cf K \ee N$. 
Here, $M \cf K$ and $K \ee N$ stand for `$K$ is a cofinal extension of $M$' and `$N$ is an end-extension of $K$', respectively. 
Also, let $M \prec K$ stand for that $K$ is an elementary extension of $M$. 
Gaifman's splitting theorem \cite[Theorem 4]{Gaif} is a basic result concerning the structure $\sup_N(M)$, which states that if $M, N \models \PA$ and $M \subseteq N$, then $M \prec \sup_N(M)$. 
Relating to this theorem, Kaye \cite[Theorem 3.2]{Kaye91} proved that if $M, N \models \mathbf{I}\Sigma_n$ and $M \prec_{\Sigma_n} N$, then $\sup_N(M) \prec_{\Sigma_n} N$. 
Here, $M \prec_\Gamma N$ stands for that $N$ is a $\Gamma$-elementary extension of $M$. 
It immediately follows from these results that if $M$ is a model of $\PA$, then for any $N \models \PA^-$ with $M \prec N$, we have $M \prec \sup_N(M) \prec N$. 
Kaye also proved that the converse implication of the last statement also holds, that is, models of $\PA$ can be characterized by considering $\sup_N(M)$ as follows:  

\begin{thm}[Kaye {\cite[Theorem 1.4]{Kaye91}}]\label{Kaye2}
Suppose $M \models \mathbf{I}\Delta_{0}$.
Then, $M \models \PA$ if and only if for any $N \models \PA^-$, if $M \prec N$, then $M \prec \sup_N(M) \prec N$. 
\end{thm}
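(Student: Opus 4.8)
The plan is to prove the two directions separately, with the left-to-right implication being essentially a repackaging of the results already assembled in the introduction. Assume $M \models \PA$ and let $N \models \PA^-$ with $M \prec N$. Since $\PA$ is a set of sentences preserved by $\prec$, also $N \models \PA$, so $M, N \models \PA$ with $M \subseteq N$, and Gaifman's splitting theorem \cite[Theorem 4]{Gaif} gives $M \prec \sup_N(M)$. For the upper half, fix $n$; then $M, N \models \mathbf{I}\Sigma_n$ and $M \prec_{\Sigma_n} N$, so \cite[Theorem 3.2]{Kaye91} yields $\sup_N(M) \prec_{\Sigma_n} N$, and letting $n$ range over all naturals gives $\sup_N(M) \prec N$. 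Hence $M \prec \sup_N(M) \prec N$.

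For the converse I would show that the stated property forces $M \models \Coll(\Sigma_{n+1})$ for every $n$. Since $M \models \mathbf{I}\Delta_0$ and $\mathbf{I}\Delta_0 + \{\Coll(\Sigma_{n+1}) : n \geq 0\}$ axiomatises $\PA$ (each $\Coll(\Sigma_{n+1})$ implies $\mathbf{I}\Sigma_n$ over $\mathbf{I}\Delta_0$, by Paris--Kirby), this is enough. So suppose for contradiction that $\Coll(\Sigma_{n+1})$ fails in $M$ for some $n$: there are a $\Sigma_{n+1}$ formula $\varphi(x,y)$ (with parameters from $M$ suppressed) and $a \in M$ with $M \models \forall x < a\, \exists y\, \varphi(x,y)$, while no bound exists, i.e. for every $b \in M$ there is $x < a$ with $\forall y < b\, \neg\varphi(x,y)$.

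I would turn this unboundedness into a bad extension. Consider the type over $M$ given by
\[ p(c) = \{\, c < a \,\} \cup \{\, \forall y < b\ \neg\varphi(c,y) : b \in M \,\}. \]
The displayed failure of collection is exactly what makes every finite subset of $p$ satisfiable in $M$, so by compactness there is $N \models \PA^-$ with $M \prec N$ realising $p$ by some $c \in N$. As $c < a \in M$ we have $c \in \sup_N(M)$, and writing $K = \sup_N(M)$ the assumed property gives $M \prec K \prec N$. From $M \prec K$ and $M \models \forall x < a\, \exists y\, \varphi(x,y)$ we obtain $K \models \exists y\, \varphi(c,y)$, so there is $y^\ast \in K$ with $K \models \varphi(c,y^\ast)$; since $K \prec N$ this lifts to $N \models \varphi(c,y^\ast)$. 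But $y^\ast \in K = \sup_N(M)$ means $y^\ast \leq m$ for some $m \in M$, hence $y^\ast < m+1$ with $m+1 \in M$, and $p$ forces $N \models \neg\varphi(c,y^\ast)$ --- a contradiction.

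The two halves are of quite different character. The forward direction is immediate once the cited splitting results are in hand, requiring essentially no new work. The whole weight of the argument sits in the converse, and the step I expect to be delicate is the construction: one must choose the type $p(c)$ so that the realised element $c$ lands strictly inside the cut $\sup_N(M)$ while its $\varphi$-witness is driven above the cut, and then deploy \emph{only} the full elementarity of $M \prec \sup_N(M)$ and $\sup_N(M) \prec N$ to force a witness back down into the cut. The one external ingredient is the equivalence of $\PA$ with $\mathbf{I}\Delta_0$ plus all $\Coll(\Sigma_{n+1})$, which allows this single uniform construction, run against an arbitrary failing collection instance, to deliver the full conclusion $M \models \PA$.
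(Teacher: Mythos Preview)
Your proof is correct and follows essentially the same approach as the paper: the paper does not prove Theorem~\ref{Kaye2} directly (it is cited from Kaye) but derives a refinement of it as a corollary of Theorems~\ref{MT1} and~\ref{MT2}, whose proofs use exactly the compactness/type-realization construction you give for the converse direction. The only cosmetic differences are that the paper, in its proof of $(3\Rightarrow 1)$ of Theorem~\ref{MT2}, first passes to an arbitrary proper elementary extension and locates the bad $c$ there via elementarity (rather than pinning $c$ down by a type as you do), and that the paper's route to the full result goes through $\CS(\Sigma_{n+1})$ and the condition $\eex{n+1}$ rather than $\Coll(\Sigma_{n+1})$ directly; neither difference is substantive for the unstratified statement you are proving.
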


Our main aim is to improve Theorem \ref{Kaye2} from two points of view.
First, we stratify Theorem \ref{Kaye2} based on arithmetical hierarchy. 
This stratification shows that the various collection schemes are characterized by properties concerning $\sup_N(M)$.
Second, we weaken $\mathbf{I}\Delta_0$ in the statement of Theorem \ref{Kaye2} to $\PA^-$. 
This weakening shows that induction axioms are not directly involved in our characterization results.

Among other things, we actually prove the following equivalences: For any $n \geq 0$ and $M \models \PA^-$, 
\begin{itemize}
	\item $M \models \CS(\Sigma_{n+1})$ if and only if for any $N \models \PA^-$, if $M \prec N$, then $\sup_N(M) \prec_{\Sigma_{n+1}} N$.  \hfill (Theorem \ref{MT1})
	\item $M \models \Coll(\Sigma_{n+1})$ if and only if for any $N \models \PA^-$, if $M \prec N$, then $M \prec_{\Sigma_{n+2}} \sup_N(M) \prec_{\Sigma_{n}} N$. \hfill (Theorem \ref{MT2})
\end{itemize}

In addition to these characterization results, we also prove the following results on $\Delta_0$-elementary cofinal extensions: For any $n \geq 0$ and $M, K \models \PA^-$ with $M \prec_{\Delta_0} K$ and $M \cf K$, 
\begin{itemize}
	\item If $M \models \Coll(\Sigma_{n+1})$, then $M \prec_{\Sigma_{n+2}} K$. \hfill (Theorem \ref{Appl1}.1)
	\item If $M \models \CS(\Sigma_{n+1})$, then $K \models \CS(\Sigma_{n+1})$.  \hfill (Theorem \ref{Appl1}.2)
\end{itemize}

Our proofs of these results are mostly based on the compactness argument, which is not so deep, but we believe that our results provide some insight into the relationship between collection schemes, Gaifman's splitting theorem, and cofinal extensions.

The organization of the present paper is as follows. 
Section \ref{Sec:pre} is devoted to preliminaries. 
We provide a detailed background of our work and introduce some notions and their basic properties.
In Section \ref{Sec:SC}, we prove our characterization theorem for $\CS(\Sigma_{n+1})$. 
We also prove the above mentioned results concerning $\Delta_0$-elementary cofinal extensions. 
In Section \ref{Sec:Coll}, we prove our characterization theorem for $\Coll(\Sigma_{n+1})$. 
Sections \ref{Sec:WColl-} and \ref{Sec:Coll-} are devoted to similar investigations of the parameter-free collection schemes $\CWm(\Sigma_{n+1})$ and $\Cm(\Sigma_{n+1})$, respectively. 
Finally, in Section \ref{Sec:versus}, we discuss the equivalence between the property $\cof{n}$ concerning $\sup_N(M)$ and the property $\COF{n}$ concerning more general cofinal extensions.

\section{Preliminaries}\label{Sec:pre}

This section consists of two subsections. 
The first subsection provides the sources of the various collection schemes introduced in Definition \ref{colls}. 
In the second subsection, we provide a detailed background of our study, introduce some notions and give their basic properties.

\subsection{Variations of the collection scheme}

The classes $\Delta_0$, $\Sigma_n$, and $\Pi_n$ of $\LA$-formulas are introduced in the usual way (cf.~\cite[Chapter 7]{Kaye}). 
It is clear that each scheme of Definition \ref{colls} with $\Gamma = \Pi_n$ for $n \geq 0$ is deductively equivalent to the scheme of the same type with $\Gamma = \Sigma_{n+1}$. 
For instance, $\CS(\Pi_n)$ is equivalent to $\CS(\Sigma_{n+1})$. 
So, this paper deals with only the collection schemes of Definition \ref{colls} with $\Gamma = \Sigma_{n}$. 

We begin with a brief review of the sources of the various collection schemes.
\begin{itemize}
	\item Parsons \cite{Pars} studied the scheme $\Coll(\Sigma_n)$ over theories of arithmetic having some induction scheme and proved that the theory $\mathbf{I}\Sigma_n$ proves $\Coll(\Sigma_n)$ (cf.~\cite[Lemmas 2 and 3]{Pars}). 
Paris and Kirby \cite{PK} introduced the theory $\mathbf{B}\Sigma_n : = \mathbf{I}\Delta_0 + \Coll(\Sigma_n)$ and investigated the properties of the theory from a model theoretic point of view. 

 \item For the scheme $\CW(\Sigma_n)$, the subscript `w' stands for `weak', but it is easy to see that $\Coll(\Sigma_n)$ and $\CW(\Sigma_n)$ are equivalent over $\PA^-$ (Proposition \ref{OProp2}). 
 For example, $\mathbf{I}\Delta_0 + \CW(\Sigma_n)$ is denoted by $\mathbf{B}\Sigma_n$ in Kaye, Paris and Dimitracopoulos' paper \cite[p.~1082]{KPD}. 

 \item The main purpose of the paper \cite{KPD} was to analyze the strength of the parameter-free versions of the induction and collection schemes. 
	In that paper, the theory $\mathbf{B}\Sigma_n^- : = \mathbf{I}\Delta_0 + \CWm(\Sigma_n)$ was introduced and it is shown that $\mathbf{B}\Sigma_{n+1}^- \vdash \mathbf{I}\Sigma_n$ (cf.~\cite[Proposition 1.2]{KPD}). 
It is not known if the theories $\mathbf{I}\Delta_0 + \Cm(\Sigma_n)$ and $\mathbf{B}\Sigma_n^-$ are deductively equivalent (cf.~\cite[p.~1097]{KPD} and \cite[Problem 2.1]{CFL}). 
    The theory $\mathbf{I}\Delta_0 + \Cm(\Sigma_n)$ is denoted by $\mathbf{B}_s(\Sigma_n)$ in Cord\'on-Franco et al.~\cite{CFL}, but we do not adopt this notation to avoid confusion with the notation for strong collection schemes. 

	\item Of course the parameter-free version of a scheme is weaker than the original one and the scheme having the superscript $\mathrm{d}$ is intermediate between them. 
That is, $\Coll(\Sigma_n) \vdash \Cd(\Sigma_n) \vdash \Cm(\Sigma_n)$ and $\CW(\Sigma_n) \vdash \CWd(\Sigma_n) \vdash \CWm(\Sigma_n)$ hold. 
	The superscript $\mathrm{d}$ here stands for `distributed' because $\Cd(\Sigma_n)$ and $\CWd(\Sigma_n)$ are respectively obtained from $\Coll(\Sigma_n)$ and $\CW(\Sigma_n)$ by distributing the quantifiers $\forall \vec{z}$ in the schemes. 
	The scheme $\CWd(\Sigma_n)$ was considered in \cite[Exercise 10.3]{Kaye}, where the theory $\mathbf{I}\Delta_0 + \CWd(\Sigma_n)$ is denoted by $\mathbf{B}\Sigma_n^-$. 
 
	\item The scheme $\CS$ is known as the strong collection scheme because $\CS(\Gamma)$ is stronger than $\Coll(\Gamma)$ (Proposition \ref{OProp1}). 
	The theory $\mathbf{S}\Sigma_n : = \mathbf{I}\Delta_0 + \CS(\Sigma_n)$ was considered in H\'ajek and Pudl\'ak \cite{HP}, and interestingly, it is known that $\mathbf{S}\Sigma_{n+1}$ is deductively equivalent to $\mathbf{I}\Sigma_{n+1}$ (cf.~\cite[Theorem 2.23]{HP} and \cite[Lemma 10.6 and Exercise 10.6]{Kaye}). 
    It is easy to see that $\CS(\Sigma_n)$ is equivalent to its parameter-free version (Proposition \ref{OProp3}). 
\end{itemize}

We list some easily verifiable facts on collection schemes. 

\begin{prop}\label{OProp1}
For any $n \geq 0$, $\PA^- + \CS(\Sigma_{n}) \vdash \Coll(\Sigma_{n})$. 
\end{prop}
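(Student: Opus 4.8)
The plan is to reason inside an arbitrary $M \models \PA^- + \CS(\Sigma_n)$ and verify each instance of $\Coll(\Sigma_n)$ directly, exploiting the fact that the two schemes share the same formula $\varphi \in \Sigma_n$, the same parameters $\vec{z}$, and the same bound $\vec{u}$. Because of this matching shape, I expect the derivation to be a short logical manipulation with no genuine arithmetic input beyond what the $\CS$ instance already supplies.

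First I would fix $\varphi \in \Sigma_n$ together with values for $\vec{z}$ and $\vec{u}$ in $M$, and assume the antecedent of the relevant $\Coll(\Sigma_n)$ instance, namely $\forall \vec{x} < \vec{u}\, \exists \vec{y}\, \varphi(\vec{x}, \vec{y}, \vec{z})$. Applying the instance of $\CS(\Sigma_n)$ for this same $\varphi$, $\vec{z}$, and $\vec{u}$ produces a witness $\vec{v}$ with
\[
	\forall \vec{x} < \vec{u}\, \bigl(\exists \vec{y}\, \varphi(\vec{x}, \vec{y}, \vec{z}) \to \exists \vec{y} < \vec{v}\, \varphi(\vec{x}, \vec{y}, \vec{z}) \bigr).
\]
Fixing this $\vec{v}$ and taking any $\vec{x} < \vec{u}$, the assumed antecedent delivers $\exists \vec{y}\, \varphi(\vec{x}, \vec{y}, \vec{z})$, which is exactly the hypothesis of the implication provided by $\CS$ for this $\vec{x}$; hence $\exists \vec{y} < \vec{v}\, \varphi(\vec{x}, \vec{y}, \vec{z})$. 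Since $\vec{x} < \vec{u}$ was arbitrary, I obtain $\forall \vec{x} < \vec{u}\, \exists \vec{y} < \vec{v}\, \varphi(\vec{x}, \vec{y}, \vec{z})$, and therefore $\exists \vec{v}\, \forall \vec{x} < \vec{u}\, \exists \vec{y} < \vec{v}\, \varphi(\vec{x}, \vec{y}, \vec{z})$, which is the consequent of $\Coll(\Sigma_n)$. Discharging the assumption on $\vec{z}$ and $\vec{u}$ then yields the full instance.

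There is no real obstacle here: the content of $\CS$ is precisely that a single bound $\vec{v}$ can be chosen uniformly in $\vec{x} < \vec{u}$ so that whenever a witness exists at all it already exists below $\vec{v}$, while the antecedent of $\Coll$ guarantees that a witness does exist for each $\vec{x} < \vec{u}$; combining the two is immediate. The only point requiring a modicum of care is the bookkeeping of the vector quantifiers and the bounded quantifier $\exists \vec{y} < \vec{v}$, but the identical shapes of the two schemes make even this routine.
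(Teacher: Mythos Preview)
Your proposal is correct and is exactly the straightforward verification one would expect; the paper itself lists this proposition among ``easily verifiable facts on collection schemes'' and omits the proof entirely, so your argument is precisely the intended one.
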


\begin{prop}\label{OProp2}
For any $n \geq 0$, $\PA^- + \Coll(\Sigma_{n})$ is deductively equivalent to $\PA^- + \CW(\Sigma_{n})$. 
\end{prop}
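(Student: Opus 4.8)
The plan is to establish the two directions of the claimed deductive equivalence separately over $\PA^-$, for a fixed $n \geq 0$ and a fixed $\varphi \in \Sigma_n$. The whole point is the asymmetry between the hypotheses of the two schemes: $\CW(\Sigma_n)$ assumes the totality statement $\forall \vec{x}\, \exists \vec{y}\, \varphi(\vec{x},\vec{y},\vec{z})$, whereas $\Coll(\Sigma_n)$ assumes only its bounded restriction $\forall \vec{x} < \vec{u}\, \exists \vec{y}\, \varphi(\vec{x},\vec{y},\vec{z})$.

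First I would prove $\PA^- + \Coll(\Sigma_n) \vdash \CW(\Sigma_n)$, which is immediate. Reasoning inside a model, fix $\vec{z}$ and assume $\forall \vec{x}\, \exists \vec{y}\, \varphi(\vec{x},\vec{y},\vec{z})$. Then for every $\vec{u}$ the weaker statement $\forall \vec{x} < \vec{u}\, \exists \vec{y}\, \varphi(\vec{x},\vec{y},\vec{z})$ holds a fortiori, so the corresponding instance of $\Coll(\Sigma_n)$ yields $\exists \vec{v}\, \forall \vec{x} < \vec{u}\, \exists \vec{y} < \vec{v}\, \varphi(\vec{x},\vec{y},\vec{z})$; since $\vec{u}$ was arbitrary, this is exactly the conclusion of $\CW(\Sigma_n)$.

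The converse $\PA^- + \CW(\Sigma_n) \vdash \Coll(\Sigma_n)$ is where a small device is needed: I would absorb the bound $\vec{u}$ into the matrix so as to manufacture a totality hypothesis out of the bounded one. Given $\varphi$, set $\psi(\vec{x},\vec{y},\vec{z},\vec{u}) := (\vec{x} < \vec{u} \to \varphi(\vec{x},\vec{y},\vec{z}))$, treating $\vec{u}$ as an extra parameter. Reasoning in a model of $\CW(\Sigma_n)$, fix $\vec{z}, \vec{u}$ and assume the $\Coll$-hypothesis $\forall \vec{x} < \vec{u}\, \exists \vec{y}\, \varphi(\vec{x},\vec{y},\vec{z})$. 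Then $\forall \vec{x}\, \exists \vec{y}\, \psi(\vec{x},\vec{y},\vec{z},\vec{u})$ holds: for $\vec{x} < \vec{u}$ any witness for $\varphi$ works, and for $\vec{x} \not< \vec{u}$ the implication $\psi$ is vacuously true (say with $\vec{y} = \vec{0}$). Applying the relevant instance of $\CW(\Sigma_n)$ to $\psi$ (whose parameters are $\vec{z}$ and $\vec{u}$) and then instantiating the resulting outer universal bound at $\vec{u}$ itself yields $\exists \vec{v}\, \forall \vec{x} < \vec{u}\, \exists \vec{y} < \vec{v}\, \psi(\vec{x},\vec{y},\vec{z},\vec{u})$. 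Finally, for $\vec{x} < \vec{u}$ the antecedent of $\psi$ holds, so $\psi$ collapses to $\varphi$; this gives $\exists \vec{v}\, \forall \vec{x} < \vec{u}\, \exists \vec{y} < \vec{v}\, \varphi(\vec{x},\vec{y},\vec{z})$, the desired conclusion of $\Coll(\Sigma_n)$.

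The only step requiring a moment's care — it is not really an obstacle — is checking that $\psi$ is (equivalent to) a $\Sigma_n$ formula, so that the cited instance of $\CW(\Sigma_n)$ is legitimate: since $\vec{x} < \vec{u}$ is $\Delta_0$, the implication $\vec{x} < \vec{u} \to \varphi$ is logically equivalent to $\neg(\vec{x} < \vec{u}) \vee \varphi$, and $\Sigma_n$ is closed up to logical equivalence under disjunction with $\Delta_0$ formulas for every $n \geq 0$ (for $n = 0$ everything is already $\Delta_0$). I would also note that no instance of induction is invoked anywhere, so that the argument is genuinely carried out over $\PA^-$.
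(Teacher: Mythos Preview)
Your argument is correct and is the standard one; the paper itself does not give a proof, listing this proposition among ``some easily verifiable facts on collection schemes,'' so there is nothing to compare against beyond noting that your device of absorbing the bound $\vec{u}$ into the matrix via $\psi(\vec{x},\vec{y},\vec{z},\vec{u}) := (\vec{x} < \vec{u} \to \varphi(\vec{x},\vec{y},\vec{z}))$ is exactly the expected trick. The complexity check that $\psi$ remains $\Sigma_n$ is handled correctly.
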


\begin{prop}\label{OProp3}
For any $n \geq 0$, $\PA^- + \CS(\Sigma_{n})$ is deductively equivalent to $\PA^- + \CSm(\Sigma_{n})$. 
\end{prop}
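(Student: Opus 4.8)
The plan is to observe that one direction is immediate and that the other follows from a simple reindexing trick turning the parameters $\vec z$ into bounded variables. Since the empty sequence of variables is allowed, every instance of $\CSm(\Sigma_n)$ is literally an instance of $\CS(\Sigma_n)$ in which the parameter block $\vec z$ is taken to be empty; hence $\PA^- + \CS(\Sigma_n) \vdash \CSm(\Sigma_n)$ with nothing further to prove. The whole content of the proposition therefore lies in the converse, $\PA^- + \CSm(\Sigma_n) \vdash \CS(\Sigma_n)$.

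For that converse I would fix $\varphi(\vec x, \vec y, \vec z) \in \Sigma_n$ and argue (either syntactically or inside an arbitrary $M \models \PA^- + \CSm(\Sigma_n)$) that the corresponding instance of $\CS(\Sigma_n)$ holds. The idea is to absorb the parameters $\vec z = (z_1,\dots,z_m)$ into the block of bounded variables. Introduce a fresh block $\vec w = (w_1,\dots,w_m)$ and set $\psi(\vec x, \vec w, \vec y) := \varphi(\vec x, \vec y, \vec w)$; this $\psi$ is again $\Sigma_n$, as it arises from $\varphi$ by a mere renaming of variables. Now fix values $\vec z, \vec u$ and apply the parameter-free scheme $\CSm(\Sigma_n)$ to $\psi$, with the bounded block played by the combined tuple $(\vec x, \vec w)$ and the bound taken to be $(\vec u, z_1 + 1, \dots, z_m + 1)$.

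This yields a single $\vec v$ such that, using that bounded quantification over a tuple is componentwise, for all $\vec x < \vec u$ and all $\vec w$ with $w_i < z_i + 1$ for each $i$,
\[
\exists \vec y\, \psi(\vec x, \vec w, \vec y) \to \exists \vec y < \vec v\, \psi(\vec x, \vec w, \vec y).
\]
The only place $\PA^-$ is used is the discreteness of the ordering, giving $\PA^- \vdash w_i < z_i + 1 \leftrightarrow w_i \le z_i$; in particular the choice $\vec w = \vec z$ meets the bound. Specializing the displayed implication to $\vec w = \vec z$ and recalling $\psi(\vec x, \vec z, \vec y) = \varphi(\vec x, \vec y, \vec z)$ produces
\[
\forall \vec x < \vec u\, \bigl(\exists \vec y\, \varphi(\vec x, \vec y, \vec z) \to \exists \vec y < \vec v\, \varphi(\vec x, \vec y, \vec z)\bigr),
\]
which is exactly the instance of $\CS(\Sigma_n)$ for $\varphi$, $\vec z$, $\vec u$; since $\vec z, \vec u$ were arbitrary, $\CS(\Sigma_n)$ follows.

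I do not expect a genuine obstacle: the essential point is that a universally quantified parameter $\vec z$ can be captured by the bounded quantifier $\vec w < \vec z + 1$ in the presence of discreteness, which is precisely the feature that makes strong collection indifferent to parameters. The remaining work is bookkeeping, namely checking that $\psi$ stays in $\Sigma_n$ under the reindexing, that the tuple bound unpacks componentwise as intended (so the combined block $(\vec x,\vec w) < (\vec u, z_1+1,\dots,z_m+1)$ splits into $\vec x < \vec u$ together with $w_i < z_i+1$), and that $\vec w = \vec z$ indeed falls under the bound by discreteness of $<$ in $\PA^-$.
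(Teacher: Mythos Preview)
Your argument is correct: absorbing the parameters $\vec z$ into the bounded block via $\vec w < \vec z + 1$ and then specializing $\vec w = \vec z$ is exactly the intended trick, and the bookkeeping you flag (closure of $\Sigma_n$ under renaming, componentwise reading of tuple bounds, discreteness of $<$ in $\PA^-$) all goes through. The paper does not supply a proof of this proposition---it is listed among the ``easily verifiable facts'' and left to the reader---so there is nothing to compare against; your write-up is essentially the standard justification the authors had in mind.
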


\begin{prop}[Cf.~Kaye {\cite[Proposition 7.1]{Kaye}}]
Let $n \geq 0$. 
\begin{enumerate}
    \item For any $\Sigma_{n+1}$ formula $\varphi(\vec{x}, \vec{y})$, the formula $\forall \vec{y} < \vec{z}\, \varphi(\vec{x}, \vec{y})$ is provably equivalent to some $\Sigma_{n+1}$ formula over $\PA^- + \Coll(\Sigma_{n+1})$. 
    \item For any $\Pi_{n+1}$ formula $\varphi(\vec{x}, \vec{y})$, the formula $\exists \vec{y} < \vec{z}\, \varphi(\vec{x}, \vec{y})$ is provably equivalent to some $\Pi_{n+1}$ formula over $\PA^- + \Coll(\Sigma_{n+1})$. 
\end{enumerate}
\end{prop}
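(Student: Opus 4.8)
The plan is to prove part (1) by induction on $n$ and to read off part (2) as its immediate De Morgan dual. Indeed, once we know that for every $\Sigma_{n+1}$ formula $\varphi$ the formula $\forall \vec{y} < \vec{z}\, \varphi$ is equivalent over $\PA^- + \Coll(\Sigma_{n+1})$ to some $\Sigma_{n+1}$ formula $\sigma$, then for an arbitrary $\Pi_{n+1}$ formula $\chi$, writing $\chi \equiv \neg\varphi$ with $\varphi \in \Sigma_{n+1}$, we obtain $\exists \vec{y} < \vec{z}\, \chi \equiv \neg(\forall \vec{y} < \vec{z}\, \varphi) \equiv \neg\sigma$, which is $\Pi_{n+1}$. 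Throughout I use that $\Coll(\Sigma_{n+1})$ is deductively equivalent to $\Coll(\Pi_n)$ and proves $\Coll(\Sigma_k)$ for every $k \leq n+1$, so collection for every class appearing below is available.

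For the inductive step of part (1), write the given $\Sigma_{n+1}$ formula as $\varphi(\vec{x}, \vec{y}) \equiv \exists \vec{w}\, \psi(\vec{x}, \vec{y}, \vec{w})$ with $\psi \in \Pi_n$. Then $\forall \vec{y} < \vec{z}\, \varphi \equiv \forall \vec{y} < \vec{z}\, \exists \vec{w}\, \psi$, and applying $\Coll(\Pi_n)$ (the converse implication being trivial over $\PA^-$) replaces this by the equivalent $\exists \vec{v}\, \forall \vec{y} < \vec{z}\, \exists \vec{w} < \vec{v}\, \psi$. Now the only unbounded quantifier is the leading $\exists \vec{v}$, so it suffices to bring the matrix $\forall \vec{y} < \vec{z}\, \exists \vec{w} < \vec{v}\, \psi$ down to a $\Pi_n$ formula; the whole formula is then $\Sigma_{n+1}$. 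To do so I first apply the induction hypothesis for part (2) at index $n-1$ --- the De Morgan dual of part (1) at that index --- to rewrite the bounded existential $\exists \vec{w} < \vec{v}\, \psi$ (a bounded existential over a $\Pi_n$ formula) as a $\Pi_n$ formula $\pi$, and then absorb the surviving bounded universal $\forall \vec{y} < \vec{z}$ into $\pi$: commuting it past the leading unbounded universal quantifiers of $\pi$ leaves a bounded universal over a lower-level $\Sigma$ formula, which is handled by the induction hypothesis for part (1) at a smaller index.

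The base case $n = 0$ is immediate, since there $\psi \in \Delta_0$, and as $\Delta_0$ is closed under bounded quantification the matrix obtained after collection is already $\Delta_0$, making the whole formula $\Sigma_1$. The point demanding care is that collapsing this all-bounded matrix back to level $n$ is not free: a bounded existential in front of a $\Pi_n$ formula --- dually, a bounded universal in front of a $\Sigma_n$ formula --- is itself equivalent to a $\Pi_n$ (resp.\ $\Sigma_n$) formula only once collection is present, so the reduction cannot be done in one shot and must be threaded through the induction on $n$. The single genuinely non-logical move is the collection step trading $\forall \vec{y} < \vec{z}\, \exists \vec{w}$ for $\exists \vec{v}\, \forall \vec{y} < \vec{z}\, \exists \vec{w} < \vec{v}$; every remaining manipulation --- commuting bounded quantifiers past like-typed unbounded ones and absorbing them into $\Delta_0$ matrices --- is purely logical and already valid over $\PA^-$.
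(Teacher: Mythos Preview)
The paper does not supply its own proof of this proposition; it merely states the result with a reference to Kaye's textbook. Your argument is correct and is exactly the standard one found there: apply collection once to bound the inner existential, then use the inductive hypothesis (together with its dual) to collapse the resulting block of bounded quantifiers back to level $n$.
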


It is proved in \cite[Proposition 1.2]{KPD} that $\mathbf{B}\Sigma_{n+1}^- \vdash \mathbf{I}\Sigma_n$ for each $n \geq 0$ and is improved as follows.  

\begin{prop}\label{w_to_s}
For each $n \geq 1$, $\PA^- + \CWm(\Sigma_{n+1}) \vdash \CS(\Sigma_{n})$. 
\end{prop}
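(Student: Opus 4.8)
The plan is to reduce to the parameter-free strong collection scheme and then apply parameter-free weak collection to a single, carefully chosen $\Sigma_{n+1}$ formula. First, by Proposition \ref{OProp3} the scheme $\CS(\Sigma_n)$ is deductively equivalent over $\PA^-$ to its parameter-free version $\CSm(\Sigma_n)$, so it suffices to derive $\CSm(\Sigma_n)$ from $\PA^- + \CWm(\Sigma_{n+1})$. This reduction is what makes the argument go through, because $\CWm$ is itself a parameter-free scheme and hence can only be applied to parameter-free formulas.

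Fix a parameter-free $\Sigma_n$ formula $\varphi(\vec{x}, \vec{y})$ and set
\[
\theta(\vec{x}, \vec{y}) \ :\equiv\ \varphi(\vec{x}, \vec{y}) \lor \forall \vec{y}'\, \neg \varphi(\vec{x}, \vec{y}').
\]
The role of the second disjunct is to supply a trivial witness exactly for those $\vec{x}$ admitting no genuine $\varphi$-witness. I would then check two properties of $\theta$. Complexity: since $n \geq 1$, the formula $\forall \vec{y}'\, \neg \varphi(\vec{x}, \vec{y}')$ is equivalent to a $\Pi_n$ formula and therefore lies in $\Sigma_{n+1}$, while $\varphi$ is $\Sigma_n \subseteq \Sigma_{n+1}$; hence the disjunction $\theta$ is equivalent to a $\Sigma_{n+1}$ formula. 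Totality: arguing in $\PA^-$, for every $\vec{x}$ either some $\vec{y}$ satisfies $\varphi(\vec{x}, \vec{y})$, which witnesses the first disjunct, or no such $\vec{y}$ exists, in which case $\vec{y} = \vec{0}$ witnesses the second disjunct; thus $\PA^- \vdash \forall \vec{x}\, \exists \vec{y}\, \theta(\vec{x}, \vec{y})$.

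Now I apply $\CWm(\Sigma_{n+1})$ to $\theta$: from the totality just established I obtain $\forall \vec{u}\, \exists \vec{v}\, \forall \vec{x} < \vec{u}\, \exists \vec{y} < \vec{v}\, \theta(\vec{x}, \vec{y})$. Given $\vec{u}$, fix such a bound $\vec{v}$. To verify $\CSm(\Sigma_n)$, take any $\vec{x} < \vec{u}$ with $\exists \vec{y}\, \varphi(\vec{x}, \vec{y})$; then the second disjunct of $\theta$ fails at $\vec{x}$, so the guaranteed witness $\vec{y} < \vec{v}$ for $\theta(\vec{x}, \vec{y})$ must actually satisfy $\varphi(\vec{x}, \vec{y})$, yielding $\exists \vec{y} < \vec{v}\, \varphi(\vec{x}, \vec{y})$. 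This is precisely $\CSm(\Sigma_n)$, and together with the opening reduction it gives $\CS(\Sigma_n)$. I expect the only delicate point to be the complexity bookkeeping of the second step: this is exactly where the hypothesis $n \geq 1$ is needed, since for $n = 0$ the disjunct $\forall \vec{y}'\, \neg \varphi$ is merely $\Pi_1$ and need not be $\Sigma_1$, so $\theta$ would fall outside the scope of $\CWm(\Sigma_1)$ and the argument would break down.
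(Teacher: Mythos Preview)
Your proof is correct and is essentially the same as the paper's. Your auxiliary formula $\theta(\vec{x},\vec{y}) \equiv \varphi(\vec{x},\vec{y}) \lor \forall \vec{y}'\,\neg\varphi(\vec{x},\vec{y}')$ is classically equivalent to the paper's choice $\exists \vec{y}\,\varphi(\vec{x},\vec{y}) \to \varphi(\vec{x},\vec{y})$, and both proofs proceed by the same reduction to $\CSm(\Sigma_n)$, the same totality-by-cases observation, and the same single application of $\CWm(\Sigma_{n+1})$.
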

\begin{proof}
By Proposition \ref{OProp3}, it suffices to prove $\PA^- + \CWm(\Sigma_{n+1}) \vdash \CSm(\Sigma_{n})$. 
Let $\varphi(\vec{x}, \vec{y})$ be any $\Sigma_{n}$ formula. 
By logic, we have
\[
    \vdash \forall \vec{x} \, \exists \vec{y} \bigl(\exists \vec{y}\, \varphi(\vec{x}, \vec{y}) \to \varphi(\vec{x}, \vec{y}) \bigr).  
\]
Since $\exists \vec{y}\, \varphi(\vec{x}, \vec{y}) \to \varphi(\vec{x}, \vec{y})$ is logically equivalent to some $\Sigma_{n+1}$ formula, $\PA^- + \CWm(\Sigma_{n+1})$ proves
\begin{align*}
     & \forall \vec{x} \, \exists \vec{y} \, \bigl(\exists \vec{y}\, \varphi(\vec{x}, \vec{y}) \to \varphi(\vec{x}, \vec{y}) \bigr) \\
    & \quad \quad \to \forall \vec{u}\, \exists \vec{v}\, \forall \vec{x} < \vec{u}\, \exists \vec{y} < \vec{v}\, \bigl(\exists \vec{y}\, \varphi(\vec{x}, \vec{y}) \to \varphi(\vec{x}, \vec{y}) \bigr).
\end{align*}
Thus,
\[
    \PA^- + \CWm(\Sigma_{n+1}) \vdash \forall \vec{u}\, \exists \vec{v}\, \forall \vec{x} < \vec{u}\, \exists \vec{y} < \vec{v}\, \bigl(\exists \vec{y}\, \varphi(\vec{x}, \vec{y}) \to \varphi(\vec{x}, \vec{y}) \bigr). 
\]
Equivalently, 
\[
    \PA^- + \CWm(\Sigma_{n+1}) \vdash \forall \vec{u}\, \exists \vec{v}\, \forall \vec{x} < \vec{u}\, \bigl(\exists \vec{y}\, \varphi(\vec{x}, \vec{y}) \to \exists \vec{y} < \vec{v}\, \varphi(\vec{x}, \vec{y}) \bigr). \qedhere
\]
\end{proof}

It is known that the theory $\PA^- + \bigcup_{n \in \omega} \Coll(\Sigma_n)$ having the full collection scheme does not prove $\mathbf{I}\Delta_0$ (cf.~\cite[Exercise 7.7]{Kaye}). 
Furthermore, it can be shown that $\PA^- + \bigcup_{n \in \omega} \Coll(\Sigma_n)$ is $\Pi_1$-conservative over $\PA^-$, and so even $\PA^- + \bigcup_{n \in \omega} \Coll(\Sigma_n) \nvdash \mathbf{IOpen}$ holds. 
In the study of the collection schemes, what properties of the collection schemes can be shown without using the induction axioms?
The right hand side of the dashed line of Figure \ref{Fig1} suggests the possibility of analyzing the situations of the collection schemes over the theory $\PA^-$ without induction axioms. 
In the present paper, we follow this suggestion and show relationships between several variants of collection schemes over the theory $\PA^-$. 

\begin{figure}[ht]
\centering
\begin{tikzpicture}
\node (ind) at (0,3) {$\mathbf{I}\Sigma_{n+1}$};
\node (col) at (0, 2) {$\mathbf{B}\Sigma_{n+1}$};
\node (colm) at (0,1) {$\mathbf{B}\Sigma_{n+1}^-$};
\node (ind-) at (0,0) {$\mathbf{I}\Sigma_{n}$};

\node at (1, 3) {$=$};
\node at (1, 2) {$=$};
\node at (1, 1) {$=$};
\node at (1, 0) {$=$};

\node (DS) at (3, 3) {$\mathbf{I}\Delta_0 + \CS(\Sigma_{n+1})$};
\node (DN) at (3, 2) {$\mathbf{I}\Delta_0 + \Coll(\Sigma_{n+1})$};
\node (DW) at (3, 1) {$\mathbf{I}\Delta_0 + \CWm(\Sigma_{n+1})$};
\node (Ds) at (3, 0) {$\mathbf{I}\Delta_0 + \CS(\Sigma_n)$};

\node (S) at (7, 3) {$\PA^- + \CS(\Sigma_{n+1})$};
\node (N) at (7, 2) {$\PA^- + \Coll(\Sigma_{n+1})$};
\node (W) at (7, 1) {$\PA^- + \CWm(\Sigma_{n+1})$};
\node (s) at (7, 0) {$\PA^- + \CS(\Sigma_n)$};

\draw [->, double] (ind)--(col);
\draw [->, double] (col)--(colm);
\draw [->, double] (colm)--(ind-);
\draw [->, double] (DS)--(S);
\draw [->, double] (DN)--(N);
\draw [->, double] (DW)--(W);
\draw [->, double] (Ds)--(s);
\draw [->, double] (S)--(N);
\draw [->, double] (N)--(W);
\draw [->, double] (W)--(s);

\draw [dashed] (5, -0.5)--(5, 3.5);

\end{tikzpicture}
\caption{The relationships between induction and collection schemes}\label{Fig1}
\end{figure}

\subsection{Model theoretic viewpoint}

\begin{defn}
Let $M, K \models \PA^-$ be such that $M \subseteq K$ and $\Gamma$ be a class of formulas. 
\begin{itemize}
    \item We say that $K$ is an \textit{end-extension} of $M$ (denoted by $M \ee K$) iff for any $a, b \in K$, if $b \in M$ and $K \models a < b$, then $a \in M$. 
    
    \item We say that $K$ is a \textit{cofinal extension} of $M$ (denoted by $M \cf K$) iff for any $a \in K$, there exists a $b \in M$ such that $K \models a < b$. 
    
    \item We say that $K$ is a \textit{$\Gamma$-elementary extension} of $M$ (denoted by $M \prec_{\Gamma} K)$ iff for any $\vec{a} \in M$ and any $\Gamma$ formula $\varphi(\vec{x})$, we have $M \models \varphi(\vec{a})$ if and only if $K \models \varphi(\vec{a})$. 

    \item We say that $K$ is a \textit{$\Gamma$-elementary cofinal extension} of $M$ (denoted by $M \prec_{\Gamma}^{\mathrm{cof}} K)$ iff $M \prec_{\Gamma} K$ and $M \cf K$. 
\end{itemize}
\end{defn}

Paris and Kirby \cite{PK} established the following model theoretic characterization of the collection scheme: 

\begin{thm}[Paris and Kirby {\cite[Theorem B]{PK}}]\label{PK}
Let $M$ be any model of $\PA^-$. 
\begin{enumerate}
	\item For $n \geq 1$, if $M$ has a proper $\Sigma_n$-elementary end-extension, then $M \models \Coll(\Sigma_n)$. 
	\item For $n \geq 2$, if $M$ is a countable model of $\mathbf{B}\Sigma_n$, then $M$ has a proper $\Sigma_n$-elementary end-extension. 
\end{enumerate}
\end{thm}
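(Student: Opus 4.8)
Let me sketch proofs for both parts.

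The plan is to prove both directions via constructing/analyzing $\Sigma_n$-elementary end-extensions, using a Skolem-type overspill and the semantic content of the collection scheme. I will treat the two parts separately since they go in opposite directions.

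---

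**Part 1** ($\Sigma_n$-elem. end-extension $\Rightarrow M \models \Coll(\Sigma_n)$).

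First I would suppose $M \ee_{\Sigma_n} K$ is a proper $\Sigma_n$-elementary end-extension (I write $\ee_{\Sigma_n}$ for an end-extension that is also $\prec_{\Sigma_n}$), and verify $M \models \Coll(\Sigma_n)$ directly from the definition. Fix parameters $\vec{z} \in M$, a bound $\vec{u} \in M$, and assume the hypothesis $M \models \forall \vec{x} < \vec{u}\, \exists \vec{y}\, \varphi(\vec{x}, \vec{y}, \vec{z})$ for a $\Sigma_n$ formula $\varphi$. The key point is that since $K$ is a \emph{proper} end-extension, there is some $c \in K \setminus M$, and every element of $M$ is $K$-below $c$ (this is exactly what a proper end-extension buys us: $M$ is an initial segment, so any $a \in M$ satisfies $K \models a < c$). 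The plan is then: for each $a < \vec{u}$ in $M$, the $\Sigma_n$ statement $\exists \vec{y}\,\varphi(a, \vec{y}, \vec{z})$ holds in $M$, hence in $K$ by $\Sigma_n$-elementarity; so $K$ sees a witness $\vec{y}_a$. The subtle step — and the main obstacle in Part 1 — is that a single fixed $c \in K \setminus M$ need not bound all these witnesses simultaneously in $K$. To fix this I would instead work inside $K$: the statement ``$\forall \vec{x} < \vec{u}\, \exists \vec{y}\, \varphi(\vec{x}, \vec{y}, \vec{z})$'' is $\Pi_{n+1}$, but I only need its instance relativized appropriately. The cleaner route is to use that $\forall \vec{x} < \vec{u}$ ranges over $M$-elements only (since $M \ee K$ and $\vec{u} \in M$), pick any fixed $c \in K \setminus M$ as the bound $\vec{v}$, and argue $K \models \forall \vec{x} < \vec{u}\,\exists \vec{y} < c\,\varphi$. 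For this I must show each witness can be taken below $c$: here one uses $n \ge 1$ and a boundedness/overspill argument — if no single $K$-bound worked, one could define in $M$ (via $\Sigma_n$-elementarity transferring the witness-existence back) an unbounded function, contradicting that $M$ is a proper initial segment of $K$. Then I pull the collected statement $\exists \vec{v}\, \forall \vec{x} < \vec{u}\, \exists \vec{y} < \vec{v}\, \varphi$, which is $\Sigma_n$, back down to $M$ by $\Sigma_n$-elementarity.

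**Part 2** ($M \models \mathbf{B}\Sigma_n$ countable, $n \ge 2$ $\Rightarrow$ proper $\Sigma_n$-elem. end-extension exists).

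For the harder existence direction, the plan is a standard omitting-types / Henkin construction over the $\Sigma_n$-elementary diagram. First I would form the theory
\[
T \;=\; \ElemDiag_{\Sigma_n}(M) \cup \{\, a < c \mid a \in M \,\} \cup \{\, c < b \to \textstyle\bigvee_{a \in M} b = a \;:\; b \text{ a new constant}\,\},
\]
where $c$ is a fresh constant forcing properness and the last scheme enforces the end-extension property (every new element below a named element must equal a named element). The core work is proving $T$ consistent by a compactness argument: finitely many conditions involve only finitely many $a \in M$, so I must realize a point $c$ above a finite set while keeping $\Sigma_n$-truth. The main obstacle here is controlling $\Pi_{n-1}$ (equivalently $\Sigma_n$) truth in the extension while adding the end-extension constraints, and this is exactly where $\Coll(\Sigma_n)$ (via $\mathbf{B}\Sigma_n$) and countability of $M$ enter: countability lets me enumerate the $\Pi_{n-1}$ types that must be omitted or controlled, and $\Coll(\Sigma_n)$ guarantees that the relevant $\Sigma_n$-definable ``traces'' on initial segments are bounded, so new elements can be inserted without disturbing $\Sigma_n$-elementarity. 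Concretely, I would build an increasing chain of finite conditions deciding each $\Sigma_n$ sentence with parameters, using $\Coll(\Sigma_n)$ at each stage to ensure that a bound exists below which all required witnesses already appear — this is the mechanism that simultaneously secures end-extension and $\Sigma_n$-elementarity. The hypothesis $n \ge 2$ is needed because the argument requires $\mathbf{B}\Sigma_n \supseteq \mathbf{B}\Sigma_2 \vdash \mathbf{I}\Sigma_1$ to have enough induction to run the coding of finite sequences and the overspill steps. I expect the delicate bookkeeping of the omitting-types step — ensuring the limit model is genuinely an end-extension and not merely a $\Sigma_n$-elementary extension — to be the crux, and I would cite the Paris–Kirby construction directly rather than reproducing it in full.
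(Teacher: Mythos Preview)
The paper does not prove this theorem; it is quoted as a cited background result from Paris and Kirby with no argument given, so there is no in-paper proof to compare against. I will therefore comment only on the soundness of your sketch.

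For Part~1, the overall strategy (use some $c \in K \setminus M$ as a universal bound and reflect down) is correct, but two steps need repair. First, the argument that witnesses lie below $c$ is much simpler than you suggest: since $M \models \forall \vec{x} < \vec{u}\, \exists \vec{y}\, \varphi$, for each $\vec{a} < \vec{u}$ there is a witness $\vec{b} \in M$, and $\Sigma_n$-elementarity makes $\vec{b}$ a witness in $K$ as well; since $M \ee K$ and $c \notin M$, every element of $M$ is below $c$. No overspill or ``unbounded function'' argument is needed. Second, and more seriously, your claim that $\exists \vec{v}\, \forall \vec{x} < \vec{u}\, \exists \vec{y} < \vec{v}\, \varphi$ is $\Sigma_n$ is wrong for $\varphi \in \Sigma_n$: the bounded universal $\forall \vec{x} < \vec{u}$ in front of a $\Sigma_n$ matrix is not $\Sigma_n$ in the absence of collection, which is precisely what you are trying to establish. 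The standard fix is to prove $\Coll(\Pi_{n-1})$ instead (equivalent to $\Coll(\Sigma_n)$) and proceed by induction on $n$: for $n = 1$ the matrix is $\Delta_0$ and the reflected formula is genuinely $\Sigma_1$; for $n \geq 2$ the induction hypothesis yields $M \models \Coll(\Sigma_{n-1})$, which lets you control the complexity of the bounded-quantifier block.

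For Part~2, the outline is too schematic to assess. The clause $c < b \to \bigvee_{a \in M} b = a$ is an infinitary disjunction, so your theory $T$ is not a first-order theory as written; the end-extension requirement must instead be enforced by omitting, for each new constant, the type expressing ``below some element of $M$ yet distinct from every element of $M$''. You correctly identify countability and $\Coll(\Sigma_n)$ as the inputs, but the actual work---showing each such type is non-isolated over the $\Sigma_n$-diagram, which is exactly where $\mathbf{B}\Sigma_n$ and $n \geq 2$ are used---is not carried out, and your final sentence concedes as much.
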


Also, the following sufficient condition for a model of $\PA^-$ to satisfy $\mathbf{B}\Sigma_{n}$ is known. 

\begin{thm}\label{Clote}
Let $M$ be any model of $\PA^-$. 
\begin{enumerate}
	\item \textup{(Wilkie and Paris {\cite[Theorem 1]{WP}})} If $M$ has a proper end-extension $N \models \mathbf{I}\Delta_0$, then $M \models \mathbf{B}\Sigma_1$. 
	\item \textup{(Clote {\cite[Proposition 3]{Clot}}; Paris and Kirby {\cite[Theorem B]{PK}} for $n = 1$)} For $n \geq 1$, if $M$ has a proper $\Sigma_n$-elementary end-extension $N \models \mathbf{I}\Sigma_{n-1}$, then $M \models \mathbf{B}\Sigma_{n+1}$. 
\end{enumerate}
\end{thm}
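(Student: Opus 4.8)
The plan is to prove both parts by an overspill argument carried out inside the given end-extension $N$, exploiting that a proper end-extension makes $M$ a proper initial cut of $N$ and that, below any parameter of $M$, the two models see exactly the same elements. Throughout I would use the two standing facts about end-extensions: since $M \ee N$, every $\Delta_0$ formula with parameters in $M$ is absolute between $M$ and $N$ (its bounded quantifiers range only over elements below $M$-parameters, which all lie in $M$), so $M \prec_{\Delta_0} N$; more generally, if $M \prec_{\Sigma_n} N$ then $\Sigma_n$- and $\Pi_n$-formulas with parameters in $M$ are absolute and $\Sigma_{n+1}$-formulas are preserved upward.

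For Part 1, I would first get $M \models \mathbf{I}\Delta_0$ from $N \models \mathbf{I}\Delta_0$ by the usual least-number argument: a putative $\Delta_0$-induction counterexample $b \in M$ would, via the $\Delta_0$-least $a \le b$ with $\neg\theta(a)$ computed in $N$ (which lies in $M$ by end-extension), already refute the induction step inside $M$ after pulling back by $\Delta_0$-absoluteness. For $\Coll(\Sigma_1)$ I would fix $c \in N \setminus M$ (so $c$ bounds $M$, making $M$ a proper cut), take an instance $M \models \forall x < u\, \exists y\, \varphi(x,y,\bar a)$ with $\varphi \in \Delta_0$, and assume for contradiction that no bound exists in $M$. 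Then the $\Delta_0$ predicate $\tau(w) := \exists x < u\, \forall y < w\, \neg\varphi(x,y,\bar a)$ holds at every $w \in M$ by absoluteness, so the $\Delta_0$-definable set $\{w : N \models \tau(w)\}$ contains the proper cut $M$; $\Delta_0$-overspill in $N$ yields $w^\ast > M$ with $N \models \tau(w^\ast)$. A witness $x_0 < u$ lies in $M$, and since $w^\ast > M$ we get $N \models \neg\varphi(x_0, y, \bar a)$ for every $y \in M$; absoluteness pushes this to $M \models \forall y\, \neg\varphi(x_0, y, \bar a)$, contradicting $M \models \exists y\, \varphi(x_0, y, \bar a)$.

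For Part 2, I would run the same template one level up, but the delicate point — and what I expect to be the main obstacle — is keeping the overspill formula within the induction $\mathbf{I}\Sigma_{n-1}$ that is available in $N$. Note first that by Theorem \ref{PK}.1 the mere existence of the proper $\Sigma_n$-elementary end-extension already gives $M \models \Coll(\Sigma_n)$, so over $M$ the class $\Sigma_n$ is closed under bounded quantification (Kaye \cite[Proposition 7.1]{Kaye}), while $N \models \mathbf{I}\Sigma_{n-1}$ gives only $N \models \Coll(\Sigma_{n-1})$ by Parsons \cite{Pars}. After normalizing an instance of $\Coll(\Sigma_{n+1})$ to $M \models \forall x < u\, \exists t\, \theta(x,t,\bar a)$ with $\theta \in \Pi_n$, and writing $\theta = \forall s\, \eta$ with $\eta \in \Sigma_{n-1}$, I would not overspill the genuine collection statement (whose complexity in $N$ is too high) but its truncated approximation $\beta(w) := \forall x < u\, \exists t < w\, \forall s < w\, \eta(x,t,s,\bar a)$. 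Using $N \models \Coll(\Sigma_{n-1})$ the three bounded quantifiers collapse, so $\beta$ is equivalent in $N$ to a $\Sigma_{n-1}$ formula — exactly the level at which $N$ has overspill.

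The contradiction should then be obtained as in Part 1, with (the negation of) $\beta$ playing the role of $\tau$: assuming no bound in $M$, overspill in $N$ produces a point above $M$ whose $\Sigma_{n-1}/\Pi_{n-1}$ matrix, being absolute between $M$ and $N$ (as $M \prec_{\Sigma_n} N$ entails $M \prec_{\Sigma_{n-1}} N$), refutes $M \models \forall x < u\, \exists t\, \theta$. The genuine difficulty — the main obstacle — is reconciling the truncation with the untruncated statement: one must verify that a truncated good bound above $M$ really does yield, after invoking absoluteness of $\eta$ on $M$ together with the closure properties granted by $\Coll(\Sigma_n)$ in $M$, a true witness or a true contradiction, and that the quantifier-compression is carried out at the correct level in each model ($\Coll(\Sigma_n)$ in $M$ but only $\Coll(\Sigma_{n-1})$ in $N$). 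This complexity bookkeeping, rather than the overspill itself, is where the argument is genuinely subtle; the case $n = 1$ reduces exactly to Part 1 and serves as the base sanity check.
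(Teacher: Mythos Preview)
The paper does not prove this theorem; it is cited from Wilkie--Paris and Clote without argument, so I assess your proposal on its own merits. Your Part~1 is correct and is the standard $\Delta_0$-overspill proof.

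Part~2 has a genuine gap in the choice of overspill formula. Your $\beta(w)=\forall x<u\,\exists t<w\,\forall s<w\,\eta$ bounds the inner variable $s$ by the \emph{same} variable $w$ on which you overspill, and this severs the link to $\theta=\forall s\,\eta$ at both ends of the argument. To launch the overspill you would need $N\models\neg\beta(w)$ for all $w\in M$, i.e.\ $\exists x<u\,\forall t<w\,\exists s<w\,\neg\eta$; but failure of collection in $M$ supplies only an \emph{unbounded} witness $s$ for $\neg\eta$, and neither $\Coll(\Sigma_n)$ in $M$ nor $\Coll(\Sigma_{n-1})$ in $N$ lets you force that witness below the given $w$. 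Dually, if you instead underspill $\beta$ to some $v^\ast\in M$, you obtain only $\forall s<v^\ast\,\eta$, which does not recover $\theta$. You explicitly flag this reconciliation as the ``main obstacle'' but do not carry it out, and with $\beta$ as written it cannot be carried out.

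The missing idea is to decouple the two bounds by introducing a fixed nonstandard parameter: choose $c\in N\setminus M$ once and for all and overspill on
\[
\tau_c(w)\;:=\;\exists x<u\,\forall t<w\,\exists s<c\,\neg\eta(x,t,s,\bar a),
\]
which is still $\Pi_{n-1}$ in $N$ over $\Coll(\Sigma_{n-1})$. For $w\in M$ one verifies $N\models\tau_c(w)$ pointwise: the $s$-witnesses to $\neg\eta$ coming from $M$ lie in $M$ and hence below $c$, and $\neg\eta\in\Pi_{n-1}$ transfers upward since $M\prec_{\Sigma_n}N$. Overspill then yields $w^\ast>M$ and some $x_0<u$ (hence $x_0\in M$) with $N\models\forall t<w^\ast\,\exists s<c\,\neg\eta(x_0,t,s)$; for each $t\in M$ this gives the $\Sigma_n$ statement $N\models\exists s\,\neg\eta(x_0,t,s)$, which reflects down to $M$ and contradicts $M\models\exists t\,\theta(x_0,t)$. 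With this single adjustment your outline goes through.
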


The theory $\mathbf{I}\Delta_0$ plays an essential role in these results. 
For example, Theorem \ref{Clote}.(1) is no longer true if we weaken the condition `$N \models \mathbf{I}\Delta_0$' to `$N \models \PA^-$' because every $M \models \PA^-$ has a proper end-extension $N \models \PA^-$ (cf.~\cite[Exercise 7.7]{Kaye}), and there exists a model of $\PA^-$ in which $\mathbf{B}\Sigma_1$ does not hold. 
Since we also want to analyze the properties of collection schemes in models that do not necessarily satisfy $\mathbf{I}\Delta_0$, we should consider phenomena in a different fashion from these results. 
We will therefore focus on cofinal extensions. 
Gaifman's splitting theorem is a basic result for the cofinal extensions of models of $\PA$. 

\begin{defn}
For $M, N \models \PA^-$ with $M \subseteq N$, let $\sup_N(M) : = \{a \in N \mid (\exists b \in M) \ N \models a \leq b\}$. 
\end{defn}

It is clear that $\sup_N(M)$ is the unique $K \models \PA^-$ such that $M \cf K \ee N$ for each $M, N \models \PA^-$ with $M \subseteq N$. 

\begin{thm}[Gaifman's splitting theorem {\cite[Theorem 4]{Gaif}}]\label{Splitting}
If $M, N \models \PA$ and $M \subseteq N$, then $M \prec \sup_N(M)$. 
\end{thm}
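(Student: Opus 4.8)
The plan is to prove Gaifman's splitting theorem by showing, via induction on formula complexity, that $M \prec \sup_N(M)$. Write $K = \sup_N(M)$, so that by construction $M \cf K \ee N$. Since $M \models \PA$ and $K$ is a cofinal extension of $M$, the crucial structural facts I would exploit are that collection (available from $\PA = \bigcup_n \mathbf{I}\Sigma_n$) holds in $M$ for every $\Sigma_n$, and that $K$ consists precisely of elements of $N$ bounded by elements of $M$. The heart of the argument is a simultaneous induction establishing, for all $n$, the statement $M \prec_{\Sigma_n} K$ together with $M \prec_{\Pi_n} K$; equivalently $M \prec K$.

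First I would treat the base case. For quantifier-free and $\Delta_0$ formulas, absoluteness between $M$ and $K$ follows from the fact that $M$ is a substructure of $K$ and all bounded quantifiers range, for parameters in $M$, only over elements of $M$ (this uses that $M \cf K$ in one direction and that $M$ is an initial segment up to its own elements). So $M \prec_{\Delta_0} K$ is essentially immediate. The inductive step is where the real content lies. Assume $M \prec_{\Sigma_n} K$ and $M \prec_{\Pi_n} K$; I want to deduce $M \prec_{\Sigma_{n+1}} K$. Take a $\Sigma_{n+1}$ formula $\exists \vec{y}\, \psi(\vec{x}, \vec{y})$ with $\psi \in \Pi_n$ and $\vec{a} \in M$. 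The upward direction is routine: if $M \models \exists \vec{y}\, \psi(\vec{a}, \vec{y})$, a witness in $M$ transfers to $K$ by the $\Pi_n$ induction hypothesis.

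The downward direction is the main obstacle, and it is exactly where cofinality and collection in $M$ must be combined. Suppose $K \models \exists \vec{y}\, \psi(\vec{a}, \vec{y})$, witnessed by some $\vec{b} \in K$. Because $M \cf K$, there is a bound $c \in M$ with $\vec{b} < c$ in $K$. Thus $K \models \exists \vec{y} < c\, \psi(\vec{a}, \vec{y})$, which is a $\Pi_n$-statement preceded by a bounded existential quantifier; I would either absorb the bounded quantifier into a $\Pi_n$ (or $\Sigma_{n+1}$) formula using the available collection in $K$, or argue directly at the $\Pi_n$ level. The key point is that $\exists \vec{y} < c\, \psi(\vec{a}, \vec{y})$ is, modulo collection, equivalent to a $\Sigma_n$ or $\Pi_n$ formula, so the induction hypothesis transfers it down to $M$, giving $M \models \exists \vec{y} < c\, \psi(\vec{a}, \vec{y})$ and hence $M \models \exists \vec{y}\, \psi(\vec{a}, \vec{y})$. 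The delicate part is confirming that the bounded-quantifier reshuffling stays within the hierarchy level for which the induction hypothesis applies; here I would invoke that $M, K \models \PA$ so that $\Coll(\Sigma_n)$ and the normal-form results (of the type in the cited Proposition following $\Coll(\Sigma_{n+1})$) are available in both models, letting $\exists \vec{y} < c\,(\cdots)$ be rewritten as a genuine $\Sigma_n$/$\Pi_n$ formula. The $\Pi_{n+1}$ case then follows by taking negations, completing the induction and hence the theorem.
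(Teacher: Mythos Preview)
Your overall strategy---prove $M \prec_{\Sigma_n} K$ for all $n$ by induction, exploiting cofinality of $M$ in $K$ and collection in $M$---is the approach underlying the paper's treatment: the paper derives the splitting theorem from Theorem~\ref{Gaifman1}, whose textbook proof is precisely this induction (and whose refinement appears as Theorem~\ref{Appl1}). However, two steps in your sketch do not go through as written.

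\textbf{Base case.} Your justification for $M \prec_{\Delta_0} K$ is incorrect. You assert that bounded quantifiers with parameters from $M$ range only over $M$; this is the statement $M \ee K$, which is false in general---$K = \sup_N(M)$ is a \emph{cofinal} extension of $M$ and typically contains new elements below elements of $M$. The standard route, implicit in the paper's reduction to Theorem~\ref{Gaifman1}, uses the MRDP theorem: since $M, N \models \PA$, every $\Delta_0$ formula is equivalent (in both models) to an existential one, so $M \subseteq N$ yields $M \prec_{\Delta_0} N$; combining this with $K \ee N$ (hence $K \prec_{\Delta_0} N$) gives $M \prec_{\Delta_0} K$.

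\textbf{Inductive step.} You write that you would ``invoke that $M, K \models \PA$'' so that collection and the bounded-quantifier normal forms are available in $K$. But $K \models \PA$ is a \emph{consequence} of $M \prec K$, not an assumption you may use while proving it. In the paper's argument (see the proof of Theorem~\ref{Appl1}) this circularity is broken by a \emph{simultaneous} induction: at each stage one proves both $M \prec_{\Sigma_{n+2}} K$ and that the relevant (strong) collection scheme transfers from $M$ to $K$, so that at the next stage a formula like $\exists \vec{y} < c\, \psi$ with $\psi \in \Pi_n$ can legitimately be treated as $\Pi_n$ in both models. Your sketch omits this, and without it the hierarchy-level bookkeeping you flag as ``delicate'' is not justified in $K$.
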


Gaifman's splitting theorem follows from the following theorem: 

\begin{thm}[Gaifman {\cite[Theorem 3]{Gaif}}]\label{Gaifman1}
Let $M, K \models \PA^-$. 
If $M \models \PA$ and $M \dcf K$, then $M \prec K$. 
\end{thm}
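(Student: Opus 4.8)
The plan is to prove, by induction on $n$, the stronger statement that $M \prec_{\Sigma_n} K$ for every $n \geq 0$; since $M \prec_{\Sigma_n} K$ is equivalent to $M \prec_{\Pi_n} K$ (a $\Pi_n$ formula being the negation of a $\Sigma_n$ one), this yields $M \prec K$. The base case $n = 0$ is exactly the hypothesis $M \prec_{\Delta_0} K$ contained in $M \dcf K$. For the inductive step I assume $M \prec_{\Sigma_n} K$ (equivalently $M \prec_{\Pi_n} K$) and aim at $M \prec_{\Sigma_{n+1}} K$, that is, $M \models \exists \vec{y}\, \psi(\vec{a}, \vec{y}) \iff K \models \exists \vec{y}\, \psi(\vec{a}, \vec{y})$ for every $\Pi_n$ formula $\psi$ and every $\vec{a} \in M$. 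The upward direction is immediate, since a witness $\vec{b} \in M$ with $M \models \psi(\vec{a}, \vec{b})$ transfers to $K$ by $M \prec_{\Pi_n} K$.

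First I would handle the downward direction by contraposition together with cofinality. Suppose $K \models \exists \vec{y}\, \psi(\vec{a}, \vec{y})$ with witness $\vec{c} \in K$, and use $M \cf K$ to fix some $d \in M$ with $K \models \vec{c} < d$, so that $K \models \exists \vec{y} < d\, \psi(\vec{a}, \vec{y})$. Assuming towards a contradiction that $M \models \forall \vec{y}\, \neg \psi(\vec{a}, \vec{y})$, it suffices to show $K \models \forall \vec{y} < d\, \neg \psi(\vec{a}, \vec{y})$. Writing $\neg \psi$ as a $\Sigma_n$ formula $\exists \vec{w}\, \gamma(\vec{a}, \vec{y}, \vec{w})$ with $\gamma \in \Pi_{n-1}$, I have $M \models \forall \vec{y} < d\, \exists \vec{w}\, \gamma(\vec{a}, \vec{y}, \vec{w})$. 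Here is where $M \models \PA$ enters: applying collection (e.g.\ $\Coll(\Sigma_n)$, available in $M$) I obtain a single bound $e \in M$ with $M \models \forall \vec{y} < d\, \exists \vec{w} < e\, \gamma$, and then, coding the choices into one sequence parameter $s$, I rewrite this as $M \models \exists s\, \Phi(\vec{a}, d, e, s)$, where $\Phi$ asserts that $s$ codes a sequence indexed by $\vec{y} < d$ with entries below $e$ and that $\forall \vec{y} < d\, \gamma(\vec{a}, \vec{y}, (s)_{\vec{y}})$ holds. Crucially $\Phi$ is equivalent to a genuine $\Pi_{n-1}$ formula, because the bounded quantifier $\forall \vec{y} < d$ merges with the leading universal block of $\gamma$ without raising complexity, so $\exists s\, \Phi$ is an honest $\Sigma_n$ formula. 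Hence $M \prec_{\Sigma_n} K$ carries it over: $K \models \exists s\, \Phi(\vec{a}, d, e, s)$, and decoding the resulting $s \in K$ gives, for every $\vec{y}_0 \in K$ with $\vec{y}_0 < d$, a witness $(s)_{\vec{y}_0}$ with $K \models \gamma(\vec{a}, \vec{y}_0, (s)_{\vec{y}_0})$; thus $K \models \forall \vec{y} < d\, \neg \psi(\vec{a}, \vec{y})$, contradicting $K \models \exists \vec{y} < d\, \psi(\vec{a}, \vec{y})$ and completing the step. For $n = 0$ no coding is needed, since $\forall \vec{y} < d\, \neg \psi$ is then already $\Delta_0$ and transfers directly by $M \prec_{\Delta_0} K$.

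The main obstacle is precisely this transfer of $\forall \vec{y} < d\, \neg \psi(\vec{a}, \vec{y})$ from $M$ to $K$: as written the formula is only $\Pi_{n+1}$, and although collection makes it equivalent to a low-complexity formula \emph{inside} $M$, that equivalence need not survive in $K$, which satisfies only $\PA^-$ and has no collection of its own. The device that overcomes this is the coding step above, which replaces the bounded quantifier $\forall \vec{y} < d$ by a single existential sequence quantifier, producing a genuinely $\Sigma_n$ sentence to which the induction hypothesis applies; the same device is what lets me conclude in $K$ for \emph{all} $\vec{y}_0 < d$, including elements of $K \setminus M$ lying below $d$, since the one decoded parameter $s \in K$ supplies witnesses uniformly across such new elements. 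I would take care that the coding and decoding of bounded sequences are available (guaranteed by $M \models \PA$) and that the decoding term $(s)_{\vec{y}}$ and the formula $\Phi$ are complexity-absolute enough to be carried by the $\Sigma_n$-elementarity; checking this complexity bookkeeping and the $\Delta_0$-definability of the sequence operations is the only genuinely technical part of the argument.
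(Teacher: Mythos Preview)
Your argument is essentially correct and is precisely the classical approach recorded in Kaye's textbook: use the finite axiom of choice (equivalently, sequence coding) to convert the bounded universal quantifier into a single existential one of the right complexity, so that the induction hypothesis $M\prec_{\Sigma_n}K$ applies. The paper itself does not give a proof of Theorem~\ref{Gaifman1}; it is merely cited, and the paper explicitly remarks (after Theorem~\ref{Kaye1}) that Kaye's proof proceeds via $\mathbf{FAC}(\Sigma_{n+1})$, which is exactly your coding step.

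Where the paper does something of its own is in Theorem~\ref{Appl1}, which sharpens Theorem~\ref{Kaye1} by dropping the $\mathbf{I}\Delta_0$ assumption. There the route is genuinely different from yours: instead of coding witnesses into a single sequence, the paper first shows (via Theorem~\ref{MT1}) that the cofinal extension $K$ \emph{itself} inherits $\CS(\Sigma_n)$ and hence $\Coll(\Sigma_n)$ from $M$. Once both $M$ and $K$ satisfy $\Coll(\Sigma_n)$, the problematic formula $\exists\vec{x}<\vec{b}\,\forall\vec{y}<\vec{c}\,\varphi$ is $\Sigma_n$ in both models simultaneously and can be transferred directly by the induction hypothesis, with no sequence coding required. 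This sidesteps exactly the worry you flag at the end, namely whether the $\Delta_0$ coding apparatus behaves well enough in a model of bare $\PA^-$; by contrast, your approach needs $M\models\PA$ (or at least $\mathbf{I}\Delta_0$) for the encoding step, which is of course available under the hypotheses of Gaifman's theorem but not under the weaker hypotheses of Theorem~\ref{Appl1}. So both arguments are sound for Theorem~\ref{Gaifman1}; the paper's technique buys the extra generality of working over $\PA^-$ alone.
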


The proof of Theorem \ref{Gaifman1} presented in the textbook of Kaye \cite{Kaye} actually proves the following hierarchical refinement.

\begin{thm}[Cf.~Kaye {\cite[Theorem 7.7]{Kaye}}]\label{Kaye1}
Let $M, K \models \PA^-$. 
\begin{enumerate}
    \item If $M \models \Coll(\Sigma_1)$ and $M \dcf K$, then $M \prec_{\Sigma_{2}} K$. 

    \item For $n \geq 1$, if $M \models \mathbf{B}\Sigma_{n+1}$ and $M \dcf K$, then $M \prec_{\Sigma_{n+2}} K$. 
\end{enumerate}
\end{thm}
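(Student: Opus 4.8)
The plan is to prove, in both parts, only the nontrivial inclusion: for $\vec{a} \in M$ and $\varphi$ of the relevant complexity, $K \models \varphi(\vec{a}) \Rightarrow M \models \varphi(\vec{a})$. The reverse implication is the easy half, obtained by witnessing the outermost existential block inside $M$ and pushing the resulting lower-complexity matrix up to $K$ along the elementarity already established. The engine throughout is the interplay of three ingredients: the hypothesis $M \prec_{\Delta_0} K$ transfers bounded formulas in both directions; cofinality $M \cf K$ lets me replace any $\vec{b} \in K$ by some larger $\vec{c} \in M$; and collection inside $M$ turns an unbounded search into a bounded one.

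First I would record the base step, which uses no collection. For a $\Sigma_1$ formula $\exists \vec{y}\, \theta(\vec{y}, \vec{a})$ with $\theta \in \Delta_0$: if $K$ satisfies it with witness $\vec{b}$, choose $\vec{c} \in M$ with $K \models \vec{b} < \vec{c}$ by cofinality, so $K \models \exists \vec{y} < \vec{c}\, \theta(\vec{y}, \vec{a})$; as this is $\Delta_0$ it descends to $M$. Hence $M \prec_{\Sigma_1} K$, and dually $M \prec_{\Pi_1} K$. For part (1), take a $\Sigma_2$ formula $\exists \vec{y}\, \psi(\vec{y}, \vec{a})$ with $\psi = \forall \vec{w}\, \neg\sigma \in \Pi_1$ and $\sigma \in \Delta_0$, and suppose $K \models \psi(\vec{b})$ with $\vec{b} < \vec{c}$, $\vec{c} \in M$. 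Aiming for a contradiction, assume $M \models \forall \vec{y} < \vec{c}\, \exists \vec{w}\, \sigma(\vec{w}, \vec{y}, \vec{a})$; since $\exists \vec{w}\, \sigma \in \Sigma_1$, the scheme $\Coll(\Sigma_1)$ supplies $\vec{d} \in M$ with
\[ M \models \forall \vec{y} < \vec{c}\, \exists \vec{w} < \vec{d}\, \sigma(\vec{w}, \vec{y}, \vec{a}). \]
This formula is $\Delta_0$, so it transfers to $K$; instantiating at $\vec{b} < \vec{c}$ yields $K \models \exists \vec{w} < \vec{d}\, \sigma(\vec{w}, \vec{b}, \vec{a})$, contradicting $K \models \psi(\vec{b})$. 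Thus $M \models \exists \vec{y}\, \psi(\vec{y}, \vec{a})$, giving $M \prec_{\Sigma_2} K$. The decisive point is that for this level the collected formula is \emph{genuinely bounded}, so only the given $\Delta_0$-elementarity, and no induction axiom, is consumed.

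Part (2) runs the same reduction iterated up the hierarchy. Assuming $\mathbf{B}\Sigma_{n+1}$, I would prove $M \prec_{\Sigma_{k+1}} K$ by induction on $k = 1, \dots, n+1$, the case $k=0$ being the base step. At stage $k$, for $\psi \in \Pi_k$ with $K \models \psi(\vec{b})$ and $\vec{b} < \vec{c} \in M$, I assume $M \models \forall \vec{y} < \vec{c}\, \neg\psi(\vec{y}, \vec{a})$; writing $\neg\psi = \exists \vec{w}\, \sigma$ with $\sigma \in \Pi_{k-1}$, the scheme $\Coll(\Sigma_k)$ (available since $k \leq n+1$) produces $\vec{d} \in M$ with $M \models \forall \vec{y} < \vec{c}\, \exists \vec{w} < \vec{d}\, \sigma(\vec{w}, \vec{y}, \vec{a})$. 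Now $\mathbf{I}\Delta_0$ intervenes: using the bounded-quantifier normal forms available over $\mathbf{B}\Sigma_{k-1}$ (cf.\ Kaye \cite[Proposition 7.1]{Kaye}), the inner $\exists \vec{w} < \vec{d}\, \sigma$ collapses to a $\Pi_{k-1}$ formula, and prefixing the bounded $\forall \vec{y} < \vec{c}$ keeps it $\Pi_{k-1}$ (equivalently $\Sigma_k$). This places the collected statement in a class for which elementarity is already secured, so the induction hypothesis $M \prec_{\Sigma_k} K$ carries it to $K$; instantiating at $\vec{b} < \vec{c}$ then contradicts $K \models \psi(\vec{b})$ exactly as before.

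The hard part will be the complexity bookkeeping in the inductive step, and specifically the legitimacy of transferring the collected formula to $K$. In part (1) this is free because the formula is $\Delta_0$ and $M \prec_{\Delta_0} K$ is assumed; that is precisely why part (1) needs neither $\mathbf{I}\Delta_0$ nor induction, and why it stops at $\Sigma_2$. In part (2), however, the collected formula is only $\Sigma_k$ \emph{modulo the normal-form equivalences of Kaye's proposition}, and those equivalences are theorems of $\mathbf{B}\Sigma_{k-1}$, which $M$ satisfies but $K$ is not assumed to. The crux is therefore to ensure these rewritings are valid on the $K$-side as well. I expect to handle this by exploiting that $\mathbf{I}\Delta_0$, whose instances are $\Pi_2$, is preserved upward once $\Sigma_2$-elementarity is in hand, and by checking that the induction simultaneously makes the particular collection instances used available in $K$; verifying exactly which instances $K$ inherits, and at which stage, is the delicate bookkeeping and the step most likely to require care.
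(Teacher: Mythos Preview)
Your proof of part~(1) is correct and is exactly the standard argument; note only that $\mathbf{I}\Delta_0$ is $\Pi_1$-axiomatized, not $\Pi_2$.

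For part~(2), the gap you flag at the end is real, and your proposed patch does not close it. At stage $k$ you obtain
\[
  M \models \forall \vec{y} < \vec{c}\ \exists \vec{w} < \vec{d}\ \sigma(\vec{w},\vec{y},\vec{a}), \qquad \sigma \in \Pi_{k-1},
\]
and replace it, over $\Coll(\Sigma_{k-1})$, by an equivalent $\Pi_{k-1}$ formula $\chi(\vec{c},\vec{d},\vec{a})$. The induction hypothesis $M \prec_{\Sigma_k} K$ does carry $\chi$ up to $K$, but the direction of the equivalence you then need in $K$, namely $\chi \to \forall \vec{y} < \vec{c}\ \exists \vec{w} < \vec{d}\ \sigma$, is precisely the direction whose proof consumes $\Coll(\Sigma_{k-1})$. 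Concretely, after instantiating $\vec{y} = \vec{b}$ you are left in $K$ with a statement of the shape $\forall \vec{v}'\, \exists \vec{w} < \vec{d}\, \forall \vec{v} < \vec{v}'\, \tau$, and to contradict $K \models \forall \vec{w}\, \exists \vec{v}\, \neg\tau$ you must collect the $\vec{v}$'s in $K$. But the relevant instances of $\Coll(\Sigma_{k-1})$ are $\Pi_{k+1}$, and $M \prec_{\Sigma_k} K$ only pushes $\Pi_k$ formulas up (and $\Pi_{k+1}$ down), so there is no stage of your induction at which they become available in $K$. Whether $\Coll$ is preserved under $\Delta_0$-elementary cofinal extensions is in fact left open in this paper (Problem~\ref{preserveProb}).

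The paper does not give its own proof of this theorem; it is quoted from Kaye's book, and the paper explicitly records that Kaye's argument uses $\mathbf{FAC}(\Sigma_{n+1})$ rather than bare collection. That is the missing idea: from $M \models \forall \vec{y} < \vec{c}\ \exists \vec{w}\ \sigma$ one extracts a single coded sequence $s \in M$ with $M \models \forall \vec{y} < \vec{c}\ \sigma\bigl((s)_{\vec{y}},\vec{y},\vec{a}\bigr)$. This formula is \emph{literally} $\Pi_{k-1}$, with parameters in $M$, so it passes to $K$ by the induction hypothesis; instantiating $\vec{y} = \vec{b}$ then yields the contradiction directly. The only requirement on $K$ is that it decode $(s)_{\vec{b}}$, and that follows from $K \models \mathbf{I}\Delta_0$, which you do get from $\Sigma_1$-elementarity. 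The paper's own refinement (Theorem~\ref{Appl1}) takes a different route still: it runs a simultaneous induction showing at each stage that $K \models \CS(\Sigma_j)$, so that $K$ genuinely has the collection needed for your normal-form manipulations. Either device works; your sketch, as it stands, supplies neither.
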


In the proof of the second clause of Theorem \ref{Kaye1} presented in \cite{Kaye}, the principle of the finite axiom of choice $\mathbf{FAC}(\Sigma_{n+1})$ for $\Sigma_{n+1}$ formulas is actually used instead of $\mathbf{B}\Sigma_{n+1}$, but it is known that $\mathbf{FAC}(\Sigma_{n+1})$ is equivalent to $\mathbf{B}\Sigma_{n+1}$ for $n \geq 1$ over $\mathbf{I}\Delta_0$ (cf.~H\'ajek and Pudl\'ak \cite{HP}).

These phenomena regarding cofinal extensions are clearly related to collection axioms, and indeed, these results are presented in the chapter on collection in Kaye's book \cite[Chapter 7]{Kaye}. 
Relating to Gaifman's splitting theorem, Mijajlovi{\'c} proved the following result concerning the relation between $\sup_N(M)$ and $N$. 

\begin{thm}[Mijajlovi{\'c} {\cite[Theorem 1.2]{Mija}}]\label{Mijajlovic1}
Let $M, N \models \PA$ and $n \geq 0$. 
If $M \prec_{\Sigma_n} N$, then $\sup_N(M) \prec_{\Sigma_{n}} N$. 
\end{thm}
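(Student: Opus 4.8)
The plan is to prove Theorem \ref{Mijajlovic1}, which asserts that for $M, N \models \PA$ with $M \prec_{\Sigma_n} N$, we have $\sup_N(M) \prec_{\Sigma_n} N$. Write $K := \sup_N(M)$, so $M \cf K \ee N$ by the uniqueness remark, and recall that $M \prec K$ by Gaifman's splitting theorem (Theorem \ref{Splitting}), which applies since $M, N \models \PA$ forces $K \models \PA$ as well. The proof goes by induction on $n$. The cases $n = 0$ and $n = 1$ are the base: since $M \ee K$ fails but $M \cf K$ holds, and $K \ee N$, one checks that $\Delta_0$ and then $\Sigma_1$ formulas with parameters from $K$ are absorbed correctly. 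Indeed $K \ee N$ already yields $K \prec_{\Delta_0} N$ by the standard end-extension preservation of bounded formulas, which handles $n \le 1$ once one observes that an existential witness in $N$ below a parameter in $K$ stays in $K$.

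The inductive step is where the real work lies. Assume the result for $n$ and suppose $M \prec_{\Sigma_{n+1}} N$; I want $K \prec_{\Sigma_{n+1}} N$. First I would note that $M \prec_{\Sigma_{n+1}} N$ implies $M \prec_{\Sigma_n} N$, so the induction hypothesis gives $K \prec_{\Sigma_n} N$; it remains to push this up one level. Take a $\Sigma_{n+1}$ formula $\exists \vec{y}\, \theta(\vec{a}, \vec{y})$ with $\theta \in \Pi_n$ and parameters $\vec{a} \in K$. The upward direction is immediate: a witness in $K$ remains a witness in $N$ because $K \prec_{\Sigma_n} N$ handles the $\Pi_n$ matrix (using that $K \prec_{\Sigma_n} N$ is equivalent to $K \prec_{\Pi_n} N$). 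The downward direction is the crux: given a witness $\vec{b} \in N$ with $N \models \theta(\vec{a}, \vec{b})$, I must produce a witness inside $K$.

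Here the idea is to exploit cofinality of $M$ in $K$ together with collection. Since $\vec{a} \in K = \sup_N(M)$, each $a_i$ lies below some element of $M$, so there is a single $c \in M$ with $\vec{a} < c$ (using $M \models \PA$). Now the key manoeuvre is to reflect the existence of the witness down to $M$. Because $M \prec K$ and $K \models \PA$ enjoys $\Coll(\Sigma_{n+1})$ (Parsons' theorem, as cited in the excerpt), I would argue that the $\Sigma_{n+1}$ statement ``for all $\vec{x} < c$, if a witness exists then one exists below some bound'' can be collected: strong collection / the equivalence $\mathbf{S}\Sigma_{n+1} = \mathbf{I}\Sigma_{n+1}$ noted after Proposition \ref{OProp3} lets me bound the witnesses uniformly by some $v \in M$. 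Applying $M \prec_{\Sigma_{n+1}} N$ to the $\Sigma_{n+1}$ sentence asserting the existence of a suitable bound $v$ (a bounded-quantifier block over a $\Pi_n$ matrix, provably $\Sigma_{n+1}$ over $\mathbf{B}\Sigma_{n+1}$ by Proposition~7.1 as quoted) transfers this bound from $N$ back to $M \subseteq K$, and then the witness $\vec{b}$ may be replaced by one below $v \in M \subseteq K$, placing it in $K$.

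The main obstacle I anticipate is making the collection step genuinely land the witness inside $K$ rather than merely inside $N$: the bound $v$ must be an element of $M$ (hence of $K$), which requires carefully setting up a $\Sigma_{n+1}$ statement with parameter $c \in M$ whose truth in $N$ is guaranteed by the given witness and whose truth then reflects to $M$ via $M \prec_{\Sigma_{n+1}} N$. The delicate points are (i) verifying that the relevant ``collected'' formula is provably $\Sigma_{n+1}$ so that $\Sigma_{n+1}$-elementarity applies, which is exactly what Proposition~7.1 over $\mathbf{B}\Sigma_{n+1}$ supplies, and (ii) ensuring the bounded witness search stays within $K$, using $M \cf K$ so that the bound $v \in M$ dominates the candidate witnesses in $K$. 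I would expect the argument to be clean once these formula-complexity bookkeeping issues are settled.
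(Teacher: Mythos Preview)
The paper does not prove Theorem~\ref{Mijajlovic1} directly (it is merely cited), but the implication $(2 \Rightarrow 3)$ of Theorem~\ref{MT1} contains precisely the argument that specializes to it, and your overall strategy matches that argument: pick $c \in M$ with $\vec{a} < c$, use $\CS(\Sigma_{n+1})$ in $M$ to find $v \in M$ with $M \models \forall \vec{x} < c\, (\exists \vec{y}\, \theta \to \exists \vec{y} < v\, \theta)$, transfer this to $N$, and conclude the witness lies below $v$, hence in $K = \sup_N(M)$. The paper packages your induction on $n$ into a single appeal to the hierarchical Tarski--Vaught test, and it never invokes Gaifman's splitting theorem or the fact $K \models \PA$; those are irrelevant to the argument, though not actually errors on your part.

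The one genuine confusion is in your transfer step. You propose to apply $M \prec_{\Sigma_{n+1}} N$ to ``the $\Sigma_{n+1}$ sentence asserting the existence of a suitable bound $v$'' so as to ``transfer this bound from $N$ back to $M$''. Both the complexity and the direction are wrong: the sentence $\exists v\, \forall \vec{x} < c\, (\exists \vec{y}\, \theta \to \exists \vec{y} < v\, \theta)$ is $\Sigma_{n+2}$, not $\Sigma_{n+1}$, so it cannot be reflected from $N$ down to $M$ via $\Sigma_{n+1}$-elementarity. The correct move is the one you already indicated one sentence earlier: having obtained $v \in M$ by strong collection \emph{in $M$}, observe that the formula $\forall \vec{x} < c\, (\exists \vec{y}\, \theta \to \exists \vec{y} < v\, \theta)$ with parameters $c, v \in M$ is $\Pi_{n+1}$ over $\Coll(\Sigma_n)$ (which both $M$ and $N$ satisfy), and push it \emph{from $M$ to $N$} using $M \prec_{\Pi_{n+1}} N$. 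Then $N \models \exists \vec{y} < v\, \theta(\vec{a}, \vec{y})$, and since $v \in K$ and $K \ee N$ the witness lies in $K$. With this correction the argument is complete and agrees with the paper's.
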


The proof of Theorem 3.2 of Kaye \cite{Kaye91} refines Mijajlovi{\'c}'s theorem as follows: 

\begin{thm}[Kaye {\cite[Theorem 3.2]{Kaye91}}]\label{Kaye3}
Let $M, N \models \mathbf{I}\Sigma_n$ and $n \geq 0$. 
If $M \prec_{\Sigma_n} N$, then $\sup_N(M) \prec_{\Sigma_{n}} N$.\footnote{For $n = 0$, the assumption $M, N \models \mathbf{I}\Delta_0$ can of course be weakened to $M, N \models \PA^-$.}
\end{thm}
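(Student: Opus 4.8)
The plan is to prove the statement by induction on $n$, the heart of the matter being to upgrade $\Sigma_{n-1}$-elementarity of $K:=\sup_N(M)$ in $N$ to $\Sigma_n$-elementarity. Recall that $M \cf K \ee N$. For the base case $n=0$ I would simply observe that an end-extension is automatically $\Delta_0$-elementary: if $K \ee N$, then for a parameter $b \in K$ the bounded quantifiers $\exists x < b$, $\forall x < b$ range over exactly the same elements in $K$ and in $N$, so $K \prec_{\Delta_0} N$; this uses only $\PA^-$, as the footnote records.

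For the inductive step I would assume the theorem at level $n-1$ and take $M, N \models \mathbf{I}\Sigma_n$ with $M \prec_{\Sigma_n} N$. Since $\mathbf{I}\Sigma_n \vdash \mathbf{I}\Sigma_{n-1}$ and $M \prec_{\Sigma_n} N$ entails $M \prec_{\Sigma_{n-1}} N$, the induction hypothesis (applied to the same pair $M,N$) gives $K \prec_{\Sigma_{n-1}} N$; in particular every $\Pi_{n-1}$ and every $\Sigma_{n-1}$ formula with parameters from $K$ is absolute between $K$ and $N$. It then remains to verify both directions of $\Sigma_n$-elementarity for a formula $\exists \vec y\, \psi(\vec x, \vec y)$ with $\psi \in \Pi_{n-1}$ and parameters $\vec a \in K$. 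The upward direction is immediate: a witness $\vec b \in K$ gives $K \models \psi(\vec a, \vec b)$, and $\Pi_{n-1}$-absoluteness yields $N \models \psi(\vec a, \vec b)$, hence $N \models \exists \vec y\, \psi(\vec a, \vec y)$.

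The downward direction is the crux, and here is where I would spend the effort. Suppose $N \models \exists \vec y\, \psi(\vec a, \vec y)$; the goal is to produce a witness lying in $K$, that is, bounded by an element of $M$. Fix $m \in M$ with $\vec a < m$, which exists because $M \cf K$. Working \emph{inside} $M$ (this choice is essential), apply strong collection: as $\mathbf{I}\Sigma_n$ proves $\CS(\Sigma_n)$ (because $\mathbf{I}\Sigma_n$ and $\mathbf{S}\Sigma_n$ are deductively equivalent), there is some $v_0 \in M$ with
\[
    M \models \forall \vec x < m\, \bigl(\exists \vec y\, \psi(\vec x, \vec y) \to \exists \vec y < v_0\, \psi(\vec x, \vec y)\bigr),
\]
a sentence I will call $\Theta(m, v_0)$. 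The key observation is that, although $\Theta$ superficially looks $\Pi_{n+1}$, it is provably equivalent to a $\Pi_n$ sentence over $\PA^- + \Coll(\Sigma_{n-1})$: its negation is $\exists \vec x < m\, \bigl(\exists \vec y\, \psi(\vec x, \vec y) \wedge \forall \vec y < v_0\, \neg\psi(\vec x, \vec y)\bigr)$, where $\forall \vec y < v_0\, \neg\psi$ is a bounded universal quantifier over the $\Sigma_{n-1}$ formula $\neg\psi$, hence provably $\Sigma_{n-1}$ by the bounded-quantifier collapse available over $\Coll(\Sigma_{n-1})$ (trivial when $n=1$). Thus $\neg\Theta$ is $\Sigma_n$ and $\Theta$ is $\Pi_n$. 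Since both $M$ and $N$ model $\mathbf{I}\Sigma_n$ and hence $\Coll(\Sigma_{n-1})$, this equivalence holds in both, so from $M \models \Theta(m, v_0)$ and $M \prec_{\Sigma_n} N$ (which transfers $\Pi_n$ sentences) I obtain $N \models \Theta(m, v_0)$. Instantiating at $\vec x = \vec a$ and using the hypothesis $N \models \exists \vec y\, \psi(\vec a, \vec y)$ gives $N \models \exists \vec y < v_0\, \psi(\vec a, \vec y)$; any witness $\vec y_0$ satisfies $\vec y_0 < v_0 \in M \subseteq K$, so $\vec y_0 \in K$ because $K \ee N$, and $\Pi_{n-1}$-absoluteness returns $K \models \psi(\vec a, \vec y_0)$. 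Hence $K \models \exists \vec y\, \psi(\vec a, \vec y)$, which closes the downward direction and the induction.

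I expect the main obstacle to be precisely the complexity bookkeeping just described. The naive uniform bounding statement $\Theta$ is a priori $\Pi_{n+1}$ and so cannot be pushed across $M \prec_{\Sigma_n} N$; the two ideas that rescue the argument are (i) applying strong collection inside $M$, so that the bound $v_0$ lands in $M$ rather than merely in $N$, and (ii) using $\Coll(\Sigma_{n-1})$ to collapse $\Theta$ to $\Pi_n$. Once these are in place, the remainder rests only on the standard facts that end-extensions are $\Delta_0$-elementary and that any element below a parameter of $K$ stays in $K$.
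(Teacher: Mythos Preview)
Your proof is correct. The paper does not give its own proof of this theorem (it is cited from Kaye), but the argument you present is essentially the one that appears in the paper's proof of the implication $(2 \Rightarrow 3)$ of Theorem~\ref{MT1}, which the paper explicitly notes refines Theorem~\ref{Kaye3}. The only difference is organizational: the paper invokes the Tarski--Vaught test directly, so it only needs to produce, for each $\Pi_{n-1}$ formula $\psi$ and $\vec a \in K$ with $N \models \exists \vec y\, \psi(\vec a, \vec y)$, a witness $\vec d \in K$ with $N \models \psi(\vec a, \vec d)$; this bypasses the need to separately establish $K \prec_{\Sigma_{n-1}} N$ and hence avoids the induction on $n$. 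Your inductive framing is equally valid and uses exactly the same two key ideas you identify---applying $\CS(\Sigma_n)$ inside $M$ so the bound lands in $M$, and collapsing the complexity of $\Theta$ to $\Pi_n$ via $\Coll(\Sigma_{n-1})$.
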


It follows from Theorems \ref{Gaifman1} and \ref{Mijajlovic1} that for any $M, N \models \PA^-$, if $M \models \PA$ and $M \prec N$, then $M \prec \sup_N(M) \prec N$. 
Kaye proved that the converse of this statement also holds in the following sense. 

\setcounter{section}{1}
\setcounter{thm}{1}

\begin{thm}[Kaye {\cite[Theorem 1.4]{Kaye91}}(restated)]
For $M \models \mathbf{I}\Delta_{0}$, the following are equivalent: 
\begin{enumerate}
    \item $M \models \PA$.
    \item For any $N \models \PA^-$, if $M \prec N$, then $M \prec \sup_N(M) \prec N$. 
\end{enumerate}
\end{thm}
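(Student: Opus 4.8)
The plan is to prove the two implications separately, with $(1)\Rightarrow(2)$ being essentially a repackaging of results already recorded above and $(2)\Rightarrow(1)$ being the substantive half, which I would handle by a single compactness argument via the contrapositive. Throughout, write $K=\sup_N(M)$, so that $M \cf K \ee N$.

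For $(1)\Rightarrow(2)$, assume $M\models\PA$ and fix $N\models\PA^-$ with $M\prec N$. Since an elementary extension preserves the complete theory, $N\models\PA$ as well, and $M\subseteq N$. Gaifman's splitting theorem (Theorem \ref{Splitting}) then gives $M\prec K$ directly; alternatively, $M\cf K$ together with $K\ee N$ yields $M\dcf K$ (as $\Delta_0$ formulas are absolute between $K$ and its end-extension $N$), whence $M\prec K$ by Theorem \ref{Gaifman1}. For $K\prec N$, note that $M\prec_{\Sigma_k}N$ for every $k$, so Theorem \ref{Mijajlovic1} gives $K\prec_{\Sigma_k}N$ for every $k$, i.e.\ $K\prec N$. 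This is exactly the observation already made just before the statement.

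For $(2)\Rightarrow(1)$ I would argue by contraposition: assuming $M\models\mathbf{I}\Delta_0$ but $M\not\models\PA$, I produce an $N\models\PA^-$ with $M\prec N$ and $\sup_N(M)\not\prec N$. Since $\PA=\bigcup_k\mathbf{I}\Sigma_k$ and $M\models\mathbf{I}\Delta_0=\mathbf{I}\Sigma_0$, there is a least $n\geq 0$ with $M\not\models\mathbf{I}\Sigma_{n+1}$. Using the deductive equivalence of $\mathbf{S}\Sigma_{n+1}=\mathbf{I}\Delta_0+\CS(\Sigma_{n+1})$ with $\mathbf{I}\Sigma_{n+1}$ recorded above, together with $M\models\mathbf{I}\Delta_0$, this means $M\not\models\CS(\Sigma_{n+1})$; by Proposition \ref{OProp3} also $M\not\models\CSm(\Sigma_{n+1})$. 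Hence there is a $\Sigma_{n+1}$ formula $\varphi(x,y)$ (parameter-free, and for readability with single variables --- the tuple case is identical) and some $u\in M$ such that for every $v\in M$,
\[
  M\models \exists x<u\,\bigl(\exists y\,\varphi(x,y)\wedge\neg\,\exists y<v\,\varphi(x,y)\bigr).
\]
In the language with a constant $\bar m$ for each $m\in M$ and one fresh constant $c$, I then consider
\[
  T=\ElemDiag(M)\cup\{\,c<\bar u,\ \exists y\,\varphi(c,y)\,\}\cup\{\,\forall y<\bar m\,\neg\varphi(c,y):m\in M\,\}.
\]
Every finite subset of $T$ mentions only finitely many bounds $\bar m_1,\dots,\bar m_k$; setting $m^\ast=\max_i m_i$, the displayed failure of collection at $v=m^\ast$ supplies an element of $M$ below $u$ witnessing the finite subset, which is therefore satisfiable in $M$ itself. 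By compactness $T$ has a model $N$, so $M\prec N$ and $a:=c^N$ satisfies $a<u$, hence $a\in\sup_N(M)$ (and in fact $a\notin M$, since a witness for $\varphi(a,\cdot)$ in $M$ would violate a bounding axiom).

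To finish, suppose toward a contradiction that $\sup_N(M)\prec N$. Since $N\models\exists y\,\varphi(a,y)$ with $a\in\sup_N(M)$, downward elementarity gives a witness $y_0\in\sup_N(M)$ with $\sup_N(M)\models\varphi(a,y_0)$, and upward elementarity gives $N\models\varphi(a,y_0)$; but $y_0\le m_0$ for some $m_0\in M$, so $N\models\exists y<\overline{m_0+1}\,\varphi(a,y)$, contradicting the axiom $\forall y<\overline{m_0+1}\,\neg\varphi(c,y)$ of $T$. Thus $\sup_N(M)\not\prec N$, so $N$ witnesses the failure of (2), completing the contrapositive. I expect the crux to be the finite satisfiability of $T$: the bounding axioms $\forall y<\bar m\,\neg\varphi(c,y)$ force every witness of $\varphi(a,\cdot)$ above all of $M$, and their consistency is guaranteed precisely by the failure of \emph{strong} collection (which, unlike $\Coll(\Sigma_{n+1})$, does not require the totality hypothesis $\forall x<u\,\exists y\,\varphi$), so the reduction of $\PA$ to $\bigcup_n\CS(\Sigma_{n+1})$ via $\mathbf{S}\Sigma_{n+1}\equiv\mathbf{I}\Sigma_{n+1}$ is what makes the argument go through. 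I note that once full $\prec$ is assumed the preservation step needs no induction; had one only the weaker $\Sigma_{n+1}$-elementarity, one would instead invoke $\sup_N(M)\prec_{\Sigma_n}N$ from Theorem \ref{Kaye3} to push $\varphi(a,y_0)$ upward.
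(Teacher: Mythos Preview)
Your proof is correct. The paper does not prove this theorem directly (it is cited from Kaye), but your compactness construction for $(2)\Rightarrow(1)$ via the failure of $\CSm(\Sigma_{n+1})$ is exactly the argument used in the $(1\Rightarrow 2)$ direction of Theorem~\ref{MT1}, and the paper's own route to Kaye's result is through that theorem together with Corollary~\ref{CorMT1} (and, for the full statement, Corollary~4.7).
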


\setcounter{section}{2}
\setcounter{thm}{14}

Inspired by Theorems \ref{Kaye1}, \ref{Mijajlovic1}, \ref{Kaye3} and \ref{Kaye2}, we introduce the following properties on models, which are our main research interests. 

\begin{defn}
Let $M \models \PA^-$ and $n \geq 0$. 
\begin{itemize}
    \item We say that $M$ satisfies the condition $\eex{n}$ iff for any $N \models \PA^-$, if $M \prec N$, then $\sup_N(M) \prec_{\Sigma_{n}} N$. 

    \item We say that $M$ satisfies the condition $\cof{n}$ iff for any $N \models \PA^-$, if $M \prec N$, then $M \prec_{\Sigma_{n}} \sup_N(M)$. 

    \item We say that $M$ satisfies the condition $\COF{n}$ iff for any $K \models \PA^-$, if $M \dcf K$, then $M \prec_{\Sigma_{n}} K$. 

\end{itemize}
\end{defn}

It is easy to see that every model satisfying $\COF{n}$ also satisfies $\cof{n}$. 
Notice that every model of $\PA^-$ trivially satisfies $\eex{0}$. 
Also, every model satisfies $\COF{1}$. 

\begin{prop}\label{GA0}
Every model of $\PA^-$ satisfies $\COF{1}$. 
\end{prop}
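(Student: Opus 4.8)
The goal is to show that every $M \models \PA^-$ satisfies $\COF{1}$, which unwinds to: for any $K \models \PA^-$ with $M \dcf K$ (that is, $M \prec_{\Delta_0} K$ and $M \cf K$), we have $M \prec_{\Sigma_1} K$. Since we already have $M \prec_{\Delta_0} K$ by hypothesis, the only content is to promote $\Delta_0$-elementarity to $\Sigma_1$-elementarity, i.e. to handle one existential block. So the plan is to take an arbitrary $\Sigma_1$ formula $\exists \vec{y}\,\theta(\vec{x},\vec{y})$ with $\theta \in \Delta_0$ and parameters $\vec{a} \in M$, and verify the two directions of the biconditional $M \models \exists \vec{y}\,\theta(\vec{a},\vec{y}) \iff K \models \exists \vec{y}\,\theta(\vec{a},\vec{y})$.

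The forward direction is immediate and uses only $M \subseteq K$ together with absoluteness of $\Delta_0$ formulas: a witness $\vec{b} \in M$ for $\theta$ in $M$ remains a witness in $K$ because $\Delta_0$ formulas are preserved in end-extensions, and more generally their truth with fixed parameters transfers upward under $\prec_{\Delta_0}$. First I would record this trivial direction. The substantive direction is the reverse one, and this is where cofinality does the work. Suppose $K \models \exists \vec{y}\,\theta(\vec{a},\vec{y})$, witnessed by some $\vec{c} \in K$. Because $M \cf K$, each component of $\vec{c}$ lies below some element of $M$; picking an upper bound $u \in M$ with $K \models \vec{c} < u$ (componentwise, or a single bound for the tuple), we obtain $K \models \exists \vec{y} < u\,\theta(\vec{a},\vec{y})$. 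The point is that $\exists \vec{y} < u\,\theta(\vec{a},\vec{y})$ is a bounded, hence $\Delta_0$, formula with parameters $\vec{a}, u \in M$.

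Now I would invoke $M \prec_{\Delta_0} K$ on this bounded formula: since $K \models \exists \vec{y} < u\,\theta(\vec{a},\vec{y})$ and this is a $\Delta_0$ statement about parameters in $M$, we get $M \models \exists \vec{y} < u\,\theta(\vec{a},\vec{y})$, and a fortiori $M \models \exists \vec{y}\,\theta(\vec{a},\vec{y})$. This completes the reverse direction and hence the equivalence for all $\Sigma_1$ formulas, giving $M \prec_{\Sigma_1} K$ and therefore $\COF{1}$.

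I do not anticipate a genuine obstacle here, since the result is flagged in the text as holding for every model; the only mild care needed is bookkeeping for tuples $\vec{y}$ and the choice of a common bound $u \in M$ for the tuple of witnesses $\vec{c}$, which is unproblematic because in a model of $\PA^-$ one can bound a finite tuple by a single element (e.g. take $u$ larger than each coordinate). The essential mechanism is simply that cofinality lets us bound any $K$-witness by an $M$-element, converting an unbounded existential into a bounded one that $\Delta_0$-elementarity can then reflect down into $M$.
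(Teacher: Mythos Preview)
Your proof is correct and follows essentially the same route as the paper: use cofinality to bound the $K$-witness by an element of $M$, turning the $\Sigma_1$ statement into a $\Delta_0$ statement with parameters in $M$, and then reflect it down via $M \prec_{\Delta_0} K$. The paper omits the forward direction as trivial and does not dwell on the tuple bookkeeping, but the argument is the same.
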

\begin{proof}
Let $M, K \models \PA^-$ be such that $M \dcf K$. 
Let $\vec{a}$ be any elements of $M$ and $\varphi(\vec{x}, \vec{y})$ be any $\Delta_0$ formula such that $K \models \exists \vec{x}\, \varphi(\vec{x}, \vec{a})$. 
Since $M \cf K$, we find some $\vec{b} \in M$ such that $K \models \exists \vec{x} < \vec{b}\, \varphi(\vec{x}, \vec{a})$. 
Since $M \prec_{\Delta_0} K$, we have $M \models \exists \vec{x} < \vec{b}\, \varphi(\vec{x}, \vec{a})$, and hence $M \models \exists \vec{x}\, \varphi(\vec{x}, \vec{a})$. 
Thus $M \prec_{\Sigma_1} K$ holds. 
\end{proof}

In general, $\eex{n}$ implies $\cof{n+1}$. 

\begin{prop}\label{eexcof}
For any $n \geq 0$ and $M \models \PA^-$, if $M$ satisfies $\eex{n}$, then $M$ also satisfies $\cof{n+1}$. 
\end{prop}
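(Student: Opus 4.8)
The plan is to work directly with $K := \sup_N(M)$, which by definition satisfies $M \cf K \ee N$, and to exploit the hypothesis $M \prec N$ together with the conclusion supplied by $\eex{n}$, namely $K \prec_{\Sigma_n} N$. The goal is to verify $M \prec_{\Sigma_{n+1}} K$, so I would fix an arbitrary $\Sigma_{n+1}$ formula $\varphi(\vec{x}) = \exists \vec{y}\, \psi(\vec{x}, \vec{y})$ with $\psi \in \Pi_n$ and parameters $\vec{a} \in M$, and establish the equivalence $M \models \varphi(\vec{a}) \iff K \models \varphi(\vec{a})$.

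First I would record the standard observation that $\Sigma_n$-elementarity coincides with $\Pi_n$-elementarity, since every $\Pi_n$ formula is logically equivalent to the negation of a $\Sigma_n$ formula. Combining $M \prec N$ with $K \prec_{\Sigma_n} N$ then yields $M \prec_{\Sigma_n} K$: for any $\Sigma_n$ formula $\sigma$ and $\vec{a} \in M \subseteq K$ one has $M \models \sigma(\vec{a}) \iff N \models \sigma(\vec{a}) \iff K \models \sigma(\vec{a})$, the first equivalence from $M \prec N$ and the second from $K \prec_{\Sigma_n} N$. In particular this gives both $M \prec_{\Pi_n} K$ and $K \prec_{\Pi_n} N$, which are the two transfer principles I will use.

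For the forward direction, from $M \models \varphi(\vec{a})$ I extract a witness $\vec{b} \in M$ with $M \models \psi(\vec{a}, \vec{b})$; since $\psi \in \Pi_n$ and $M \prec_{\Pi_n} K$, also $K \models \psi(\vec{a}, \vec{b})$, whence $K \models \varphi(\vec{a})$. The subtler direction is the backward one, and this is where I expect the only real obstacle: a witness $\vec{b}$ for $K \models \psi(\vec{a}, \vec{b})$ lies in $K$ and need not belong to $M$, so $M \prec_{\Pi_n} K$ by itself cannot descend $\psi(\vec{a}, \vec{b})$ to $M$. The remedy is to route through $N$ rather than down into $M$ directly: since $K \prec_{\Pi_n} N$, the witness gives $N \models \psi(\vec{a}, \vec{b})$, hence $N \models \varphi(\vec{a})$; and because $\varphi(\vec{a})$ is a sentence whose only parameters $\vec{a}$ lie in $M$, the full elementarity $M \prec N$ descends it to $M \models \varphi(\vec{a})$. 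This settles both directions, yielding $M \prec_{\Sigma_{n+1}} K = \sup_N(M)$, which is exactly $\cof{n+1}$.
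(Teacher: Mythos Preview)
Your proof is correct and follows essentially the same route as the paper's: both arguments use $\eex{n}$ to get $K \prec_{\Sigma_n} N$ and then route the backward direction through $N$ via $M \prec N$. The only difference is cosmetic: the paper checks just the implication $K \models \varphi(\vec a) \Rightarrow M \models \varphi(\vec a)$ and then invokes the standard fact that downward preservation of $\Sigma_{n+1}$ already yields full $\Sigma_{n+1}$-elementarity, whereas you spell out the forward direction explicitly using $M \prec_{\Pi_n} K$.
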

\begin{proof}
Suppose that $M$ satisfies $\eex{n}$. 
Let $N \models \PA^-$ be any model such that $M \prec N$. 
Let $\varphi(\vec{x})$ be any $\Sigma_{n+1}$ formula and $\vec{a}$ be any tuple of elements of $M$. 
Suppose $\sup_N(M) \models \varphi(\vec{a})$. 
By the condition $\eex{n}$, we have $\sup_N(M) \prec_{\Sigma_n} N$, and so $N \models \varphi(\vec{a})$. 
Since $M \prec N$, we get $M \models \varphi(\vec{a})$. 
Thus, we have proved $M \prec_{\Sigma_{n+1}} \sup_N(M)$. 
\end{proof}

Theorem \ref{Kaye1} says that every model of $\Coll(\Sigma_1)$ satisfies $\COF{2}$, and that for $n \geq 1$, every model of $\mathbf{B}\Sigma_{n+1}$ satisfies $\COF{n+2}$. 

In the present paper, we show that the properties $\eex{n}$, $\cof{n}$ and $\COF{n}$ exactly capture the behavior of several collection schemes over models of $\PA^-$. 
Our main results are as follows: For any $n \geq 0$ and $M \models \PA^-$, 
\begin{itemize}
	\item $M \models \CS(\Sigma_{n+1})$ if and only if $M$ satisfies $\eex{n+1}$. \hfill (Theorem \ref{MT1})
	\item $M \models \Coll(\Sigma_{n+1})$ if and only if $M$ satisfies $\eex{n}$ and $\cof{n+2}$ if and only if $M$ satisfies $\eex{n}$ and $\COF{n+2}$. \hfill (Theorem \ref{MT2})
\end{itemize}

Furthermore, we will introduce the two conditions $\cofm{n}$ and $\coff{n}$ that are variants of $\cof{n}$, and prove similar results for other collection schemes by using these conditions. 
The implications and equivalences obtained in the present paper are summarized in Figure \ref{Fig2}.  

\begin{figure}[ht]
\centering
\begin{tikzpicture}
\node (COLW-) at (0,6.5) {$\CWd(\Sigma_{n+2})$};
\node (COLW) at (4, 6.5) {$\CWm(\Sigma_{n+2})$};
\node (strong-) at (0,5.2) {$\CSm(\Sigma_{n+1})$};
\node (strong) at (4, 5.2) {$\CS(\Sigma_{n+1})$};
\node (colw) at (0,3.9) {$\CW(\Sigma_{n+1})$};
\node (col) at (4,3.9) {$\Coll(\Sigma_{n+1})$};
\node (cold) at (0,2.6) {$\Cd(\Sigma_{n+1})$};
\node (col-) at (4,2.6) {$\Cm(\Sigma_{n+1})$};
\node (colwd) at (0,1.3) {$\CWd(\Sigma_{n+1})$};
\node (colw-) at (4,1.3) {$\CWm(\Sigma_{n+1})$};

\node (MIGAm') at (9, 6.5) {$\eex{n+1}$\ \&\ $\cofm{n+3}$};
\node (MIn+1) at (9, 5.2) {$\eex{n+1}$};
\node (MIGA) at (9, 3.9) {$\eex{n}$\ \&\ $\cof{n+2}$};
\node (MIGAf) at (9, 2.6) {$\eex{n}$\ \&\ $\coff{n+2}$};
\node (MIGAm) at (9, 1.3) {$\eex{n}$\ \&\ $\cofm{n+2}$};

\draw [<->, double] (COLW-)--(COLW);
\draw [->, double] (COLW)--(strong);
\draw [<->, double] (strong)--(strong-);
\draw [->, double] (strong)--(col);
\draw [<->, double] (col)--(colw);
\draw [->, double] (col)--(col-);
\draw [<->, double] (cold)--(col-);
\draw [->, double] (col-)--(colw-);
\draw [<->, double] (colwd)--(colw-);

\draw [->, double] (4, 7.3)--(COLW);
\draw [->, double] (colw-)--(4, 0.5);

\draw [<->, double] (COLW)--(MIGAm');
\draw [<->, double] (strong)--(MIn+1);
\draw [<->, double] (col)--(MIGA);
\draw [<->, double] (col-)--(MIGAf);
\draw [<->, double] (colw-)--(MIGAm);

\draw[dashed] (5.5, 0.5)--(5.5,7.3);

\draw (4,5.9) node[left]{Prop.~\ref{w_to_s}};

\draw (4,4.6) node[left]{Prop.~\ref{OProp1}};

\draw (2,5.2) node[above]{Prop.~\ref{OProp3}};
\draw (6.5,5.2) node[above]{Thm.~\ref{MT1}};

\draw (2,3.9) node[above]{Prop.~\ref{OProp2}};
\draw (6.5,3.9) node[above]{Thm.~\ref{MT2}};

\draw (2,2.6) node[above]{Thm.~\ref{MT4}};
\draw (6.5,2.6) node[above]{Thm.~\ref{MT4}};

\draw (2,1.3) node[above]{Thm.~\ref{MT3}};
\draw (6.5,1.3) node[above]{Thm.~\ref{MT3}};

\end{tikzpicture}
\caption{Implications for models of $\PA^-$}\label{Fig2}
\end{figure}

As applications of our results, for several theories $T$, we show that every $\Delta_0$-elementary cofinal extension of a model of $T$ is also a model of $T$. 
For example, as an easy consequence of Proposition \ref{GA0}, we have the following corollary:

\begin{cor}\label{preserve}
   Let $M, K \models \PA^-$ be such that $M \dcf K$. 
   \begin{enumerate}
       \item If $M \models \mathbf{I}\Delta_0$, then $K \models \mathbf{I}\Delta_0$. 
       \item If $M \models \mathbf{I}\Pi_1^-$, then $K \models \mathbf{I}\Pi_1^-$.
   \end{enumerate}
\end{cor}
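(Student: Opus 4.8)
The plan is to derive both clauses directly from Proposition \ref{GA0}, which already yields $M \prec_{\Sigma_1} K$ whenever $M \dcf K$. The first thing I would record is that $\Sigma_1$- and $\Pi_1$-elementarity coincide: since $M \prec_{\Gamma} K$ is by definition a biconditional, applying it to a $\Sigma_1$ formula $\varphi$ and to its negation shows that $M \prec_{\Sigma_1} K$ holds if and only if $M \prec_{\Pi_1} K$. Thus under the hypothesis $M \dcf K$ I may freely transfer $\Pi_1$ sentences, and more generally $\Pi_1$ formulas with parameters in $M$, back and forth between $M$ and $K$. Everything then reduces to expressing the two theories in a form to which this transfer applies.

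For clause (1), the obstacle is that an instance of $\Delta_0$-induction may carry parameters lying in $K \setminus M$, so I cannot push such an instance directly into $M$. I would circumvent this by using the standard reformulation of $\mathbf{I}\Delta_0$ over $\PA^-$ as the scheme of bounded induction: for each $\Delta_0$ formula $\varphi(x,\vec z)$, the sentence asserting $\forall \vec z\, \forall w\,(\varphi(0,\vec z) \wedge \forall x < w(\varphi(x,\vec z) \to \varphi(x+1,\vec z)) \to \varphi(w,\vec z))$. Each such sentence is $\Pi_1$, since its matrix is $\Delta_0$ and all displayed quantifiers are either universal or bounded, and this scheme is equivalent to $\mathbf{I}\Delta_0$ over $\PA^-$ by the usual least-number argument. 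Hence if $M \models \mathbf{I}\Delta_0$, then $M$ satisfies every such $\Pi_1$ sentence, and by $\Pi_1$-elementarity so does $K$; therefore $K \models \mathbf{I}\Delta_0$.

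For clause (2) the axioms are parameter-free, so I would argue more directly and need not reformulate the theory. Let $\varphi(x)$ be a parameter-free $\Pi_1$ formula and suppose $K \models \varphi(0) \wedge \forall x(\varphi(x) \to \varphi(x+1))$. I would first transfer the induction hypotheses into $M$: the base case $\varphi(0)$ is a $\Pi_1$ sentence and so holds in $M$, while the inductive step is argued pointwise, namely for $a \in M$ with $M \models \varphi(a)$, $\Pi_1$-elementarity gives $K \models \varphi(a)$, the step in $K$ gives $K \models \varphi(a+1)$, and $\Pi_1$-elementarity returns $M \models \varphi(a+1)$; hence $M \models \forall x(\varphi(x) \to \varphi(x+1))$. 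Applying $\mathbf{I}\Pi_1^-$ in $M$ yields $M \models \forall x\, \varphi(x)$, which is again a $\Pi_1$ sentence and so transfers back to give $K \models \forall x\, \varphi(x)$.

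The only genuinely delicate point is the treatment of parameters together with the inductive step, which is not itself a $\Pi_1$ formula. In clause (1) this is resolved once and for all by the $\Pi_1$ reformulation of $\mathbf{I}\Delta_0$, and in clause (2) by transferring the inductive step one instance at a time rather than as a single formula. Everything else is a routine application of Proposition \ref{GA0} together with the coincidence of $\Sigma_1$- and $\Pi_1$-elementarity.
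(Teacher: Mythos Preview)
Your argument is correct. For clause (1) you do exactly what the paper does: invoke Proposition \ref{GA0} to obtain $M \prec_{\Sigma_1} K$ and then use a $\Pi_1$ axiomatization of $\mathbf{I}\Delta_0$; the paper simply states that such an axiomatization exists, while you spell out the bounded-induction form. For clause (2) your route diverges from the paper's: the paper appeals to the fact (from \cite[Proposition~3.1]{KPD}) that $\mathbf{I}\Pi_1^-$ is axiomatized by a set of $\Sigma_2$ sentences, which go upward along any $\Sigma_1$-elementary extension, whereas you bypass this black box and transfer the induction hypothesis pointwise via $\Pi_1$-elementarity before applying $\mathbf{I}\Pi_1^-$ in $M$ and sending the $\Pi_1$ conclusion back. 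Your version is more self-contained, since it does not rely on the external axiomatizability result; the paper's is shorter and identifies the structural reason (the $\Sigma_2$ complexity of the theory) behind the transfer.
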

\begin{proof}
By Proposition \ref{GA0}, we have $M \prec_{\Sigma_1} K$. 
Then, these clauses follow from the facts that $\mathbf{I}\Delta_0$ is axiomatized by a set of $\Pi_1$ sentences and that $\mathbf{I}\Pi_1^-$ is axiomatized by a set of $\Sigma_2$ sentences (Cf.~\cite[Proposition 3.1]{KPD}). 
\end{proof}

David Belanger and Tin Lok Wong independently proved the following theorem concerning studies in this direction.\footnote{This result was informed by Wong through private communication.}

\begin{thm}[Belanger and Wong]\label{BW}
   Let $M, K \models \PA^-$ be such that $M \dcf K$. 
   \begin{enumerate}
       \item If $M \models \mathbf{I}\Sigma_{n+1}$, then $K \models \mathbf{I}\Sigma_{n+1}$. 
       \item If $M \models \mathbf{B}\Sigma_{n+1} + \exp$, then $K \models \mathbf{B}\Sigma_{n+1} + \exp$.
   \end{enumerate}
\end{thm}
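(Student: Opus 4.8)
The plan is to deduce both clauses from the paper's preservation results for $\Delta_0$-elementary cofinal extensions, combined with the standard axiomatic reformulations of $\mathbf{I}\Sigma_{n+1}$ and $\mathbf{B}\Sigma_{n+1}$. The guiding principle is that, under $M \dcf K$, the collection part of each theory survives the passage from $M$ to $K$ either directly (the strong collection scheme, by Theorem~\ref{Appl1}.2) or as a family of low-complexity \emph{sentences} transported along the $\Sigma_{n+2}$-elementarity supplied by Theorem~\ref{Appl1}.1. Since $\mathbf{I}\Delta_0$ is preserved by Corollary~\ref{preserve}, in each clause it remains only to preserve the collection part.

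For clause (1) I would use the deductive equivalence $\mathbf{I}\Sigma_{n+1} \equiv \mathbf{S}\Sigma_{n+1} = \mathbf{I}\Delta_0 + \CS(\Sigma_{n+1})$ recalled in Section~\ref{Sec:pre}. Assume $M \models \mathbf{I}\Sigma_{n+1}$; then $M \models \mathbf{I}\Delta_0$ and $M \models \CS(\Sigma_{n+1})$. By Corollary~\ref{preserve}.1 we obtain $K \models \mathbf{I}\Delta_0$, and by Theorem~\ref{Appl1}.2 we obtain $K \models \CS(\Sigma_{n+1})$. Hence $K \models \mathbf{I}\Delta_0 + \CS(\Sigma_{n+1}) = \mathbf{S}\Sigma_{n+1}$, and the equivalence gives $K \models \mathbf{I}\Sigma_{n+1}$. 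This clause needs nothing beyond the strong-collection preservation already isolated in Theorem~\ref{Appl1}.2.

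For clause (2), write $\mathbf{B}\Sigma_{n+1} = \mathbf{I}\Delta_0 + \Coll(\Sigma_{n+1})$ and assume $M \models \mathbf{B}\Sigma_{n+1} + \exp$. Since $M \models \Coll(\Sigma_{n+1})$, Theorem~\ref{Appl1}.1 gives $M \prec_{\Sigma_{n+2}} K$, equivalently $M \prec_{\Pi_{n+2}} K$. I would then argue that the entire theory $\mathbf{B}\Sigma_{n+1} + \exp$ is $\Pi_{n+2}$-axiomatizable; granting this, every axiom holds in $M$ and, being a sentence, passes to $K$ along $M \prec_{\Pi_{n+2}} K$, so $K \models \mathbf{B}\Sigma_{n+1} + \exp$. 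The $\mathbf{I}\Delta_0$ part (axiomatized by $\Pi_1$ sentences, cf.~Corollary~\ref{preserve}) and $\exp$ (a $\Pi_2$ sentence) already lie in the $\Pi_{n+2}$ fragment, so the content is to bring the collection instances down to $\Pi_{n+2}$.

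The main obstacle is exactly this $\Pi_{n+2}$-axiomatizability, and it is where $\exp$ is used. A collection instance $\forall \vec z\, \forall \vec u\,(\forall \vec x < \vec u\, \exists \vec y\, \varphi \to \exists \vec v\, \forall \vec x < \vec u\, \exists \vec y < \vec v\, \varphi)$ with $\varphi \in \Sigma_{n+1}$ is only $\Pi_{n+4}$ on its face; over $\mathbf{B}\Sigma_{n+1}$ both antecedent and consequent contract to $\Sigma_{n+1}$ (bounded universal quantification preserves $\Sigma_{n+1}$, cf.~\cite{Kaye}), so the whole instance becomes provably $\Pi_{n+2}$. The delicate point is that the resulting $\Pi_{n+2}$ reformulations must \emph{re-derive} full collection over $\mathbf{I}\Delta_0 + \exp$ alone, without presupposing collection; this is arranged by using $\exp$-coding of bounded-length sequences to reduce tuple-collection to collection for a single $\Sigma_{n+1}$ formula and to perform the contraction $\forall \vec x < \vec u\, \exists \vec y < \vec v\, \varphi \leftrightarrow \exists \sigma\, \forall \vec x < \vec u\, \varphi(\vec x, (\sigma)_{\vec x})$ below an explicit $\exp$-bound. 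I expect the bookkeeping of this coding, rather than any conceptual difficulty, to be the bulk of the work. Note finally that a naive attempt to prove $K \models \Coll(\Sigma_{n+1})$ by pushing parameters down into $M$ fails, since in a cofinal extension the relevant parameters $\vec c, \vec u \in K$ need not lie in $M$; this is precisely why the argument must proceed through parameter-free $\Pi_{n+2}$ sentences.
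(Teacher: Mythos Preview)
The paper does not prove Theorem~\ref{BW}; it is quoted as a result of Belanger and Wong communicated privately, with no argument supplied. So there is no ``paper's proof'' to compare against, and your proposal should be judged on its own merits.

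Your treatment of clause~(1) is correct: the paper itself says that Theorem~\ref{Appl1}.2 is a refinement of Theorem~\ref{BW}.1, and combining it with Corollary~\ref{preserve}.1 and the equivalence $\mathbf{I}\Sigma_{n+1}\equiv\mathbf{I}\Delta_0+\CS(\Sigma_{n+1})$ gives exactly the conclusion. No circularity arises, since Theorem~\ref{Appl1} is proved independently of Theorem~\ref{BW}.

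Your treatment of clause~(2), however, has a genuine gap. The argument rests entirely on the assertion that $\mathbf{B}\Sigma_{n+1}+\exp$ is $\Pi_{n+2}$-axiomatizable, and the sketch you give does not establish this. You propose contracting the antecedent $\forall\vec{x}<\vec{u}\,\exists\vec{y}\,\varphi$ of a collection instance to $\Sigma_{n+1}$ via a coded choice sequence; but with $\vec{y}$ \emph{unbounded}, this contraction is precisely the finite axiom of choice for $\varphi$, which over $\mathbf{I}\Delta_0+\exp$ is equivalent to the very collection instance you are trying to axiomatize. The bounded version you write down, $\forall\vec{x}<\vec{u}\,\exists\vec{y}<\vec{v}\,\varphi\leftrightarrow\exists\sigma\,\forall\vec{x}<\vec{u}\,\varphi(\vec{x},(\sigma)_{\vec{x}})$, only handles the \emph{consequent}, which was never the obstacle; the antecedent remains genuinely $\Pi_{n+2}$ over the base. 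The paper itself cites only a $\Pi_{n+3}$ axiomatization of $\mathbf{B}\Sigma_{n+1}$ (Remark~\ref{RemMT2}), and your use of $M\prec_{\Sigma_{n+2}}K$ alone, discarding the cofinality of $M$ in $K$, is too weak: one expects $\Sigma_{n+2}$-elementary extensions of models of $\mathbf{B}\Sigma_{n+1}+\exp$ that fail $\mathbf{B}\Sigma_{n+1}$. A correct argument for clause~(2) must exploit $M\cf K$ more substantially than you do.
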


We will show that an analogous preservation property also holds for the theories $\Coll(\Sigma_{n+1})$, $\CWm(\Sigma_{n+1})$, $\mathbf{B}\Sigma_{n+1}^-$ and $\mathbf{I}\Sigma_{n+1}^-$ (See Theorem \ref{Appl1} and Corollary \ref{Cor_Appl2}).

\section{Strong collection schemes}\label{Sec:SC}

In this section, from the viewpoint of Gaifman's splitting theorem, we prove a theorem on the model theoretic characterization of $\CS(\Sigma_{n+1})$. 
As consequences of the result, we refine several already known results such as Theorems \ref{Kaye1} and \ref{Mijajlovic1}. 
Also, as an application of the result, we prove that every $\Delta_0$-elementary cofinal extension of a model of $\PA^- + \CS(\Sigma_{n+1})$ is also a model of $\CS(\Sigma_{n+1})$. 

\begin{thm}\label{MT1}
For any $M \models \PA^-$ and $n \geq 0$, the following are equivalent: 
\begin{enumerate}
 \item $M$ satisfies the condition $\eex{n+1}$. 
 That is, for any $N \models \PA^-$, if $M \prec N$, then $\sup_N(M) \prec_{\Sigma_{n+1}} N$. 

 \item $M \models \CS(\Sigma_{n+1})$. 

 \item \textup{($n \geq 1$):} $M \models \Coll(\Sigma_n)$ and for any $N \models \PA^- + \Coll(\Sigma_n)$, if $M \prec_{\Sigma_{n+1}} N$, then $\sup_N(M) \prec_{\Sigma_{n+1}} N$. \\
    \textup{($n = 0$):} For any $N \models \PA^-$, if $M \prec_{\Sigma_1} N$, then $\sup_N(M) \prec_{\Sigma_1} N$. 
    
 \item For any $N \models \PA^-$, if $M \prec_{\Sigma_{n+2}} N$, then $\sup_N(M) \prec_{\Sigma_{n+1}} N$. 

\end{enumerate}
\end{thm}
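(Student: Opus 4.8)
The plan is to prove Theorem~\ref{MT1} by establishing a cycle of implications among the four conditions, using the compactness method to construct the relevant models. The natural cycle is $(2) \Rightarrow (1) \Rightarrow (4) \Rightarrow (3) \Rightarrow (2)$, or perhaps more cleanly $(2) \Rightarrow (4) \Rightarrow (1)$ together with the equivalence of $(3)$ to these. The key observation is that $\CS(\Sigma_{n+1})$, being equivalent to its parameter-free version $\CSm(\Sigma_{n+1})$ by Proposition~\ref{OProp3}, asserts the existence of a bound $\vec{v}$ below which all satisfiable $\Sigma_{n+1}$ witnesses (for $\vec{x} < \vec{u}$) already appear. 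This is exactly the semantic content we need to control which $\Sigma_{n+1}$ facts pass between $\sup_N(M)$ and $N$.

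**The forward direction $(2) \Rightarrow (1)$.**

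First I would assume $M \models \CS(\Sigma_{n+1})$ and take any $N \models \PA^-$ with $M \prec N$; write $K = \sup_N(M)$, so $M \cf K \ee N$. To show $K \prec_{\Sigma_{n+1}} N$, since $K \ee N$ one direction (upward preservation of $\Sigma_{n+1}$ from $K$ to $N$, or rather the relevant $\Pi$-side) is handled by the end-extension structure; the substantive direction is showing that a $\Sigma_{n+1}$ formula $\exists \vec{y}\,\psi(\vec{y},\vec{a})$ with $\vec{a} \in K$ holding in $N$ already holds in $K$. Here $\vec{a} < \vec{b}$ for some $\vec{b} \in M$ since $M \cf K$. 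The trick is to apply $\CS(\Sigma_{n+1})$ \emph{inside $M$} (using $M \prec N$ to transfer the strong collection instance): the bound $\vec{v} \in M$ produced by strong collection, interpreted in $N$, guarantees that any satisfiable witness for parameters below $\vec{b}$ can be found below $\vec{v}$. Since $\vec{v} \in M \subseteq K$ and $K \ee N$, the witness $\vec{y} < \vec{v}$ lies in $K$, giving $K \models \exists \vec{y}\,\psi(\vec{y},\vec{a})$. The role of $\Sigma_{n+1}$ (rather than merely $\Delta_0$) matters because the inner matrix $\psi$ may itself be $\Sigma_n$, and one must check that $\psi$ holding in $N$ below a $K$-bound reflects into $K$ — this uses that $K \prec_{\Sigma_n} N$ should be bootstrapped, likely via Proposition~\ref{GA0} and an induction on $n$, or absorbed into the $\Delta_0$-elementary base case.

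**The converse and the compactness step.**

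For $(1) \Rightarrow (2)$ (or $(4) \Rightarrow (2)$), I would argue contrapositively: suppose $M \not\models \CS(\Sigma_{n+1})$, so there is a $\Sigma_{n+1}$ formula and a parameter $\vec{u} \in M$ such that no bound $\vec{v}$ works in $M$. The goal is to build $N \succ M$ (or $N \succ_{\Sigma_{n+2}} M$ for clause $(4)$) in which $\sup_N(M)$ fails to be $\Sigma_{n+1}$-elementary in $N$ --- concretely, an element realizing a witness that sits above all of $M$ yet is demanded by a $\Sigma_{n+1}$ fact true in $N$. \textbf{This compactness construction is the main obstacle.} The delicate point is simultaneously arranging (i) $M \prec N$ as a full elementary extension, (ii) that the failure of strong collection forces a witness $\vec{y}$ to be cofinally unbounded over $M$, so that it escapes into $N \setminus K$ while the corresponding $\Sigma_{n+1}$ assertion remains true in $N$ but false in $K = \sup_N(M)$. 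I would set up a type over $M$ (in the language with a fresh constant) expressing ``$\vec{c}$ witnesses $\varphi(\vec{x},\vec{c})$ for some $\vec{x} < \vec{u}$'' together with ``$\vec{c} > \vec{b}$'' for every $\vec{b} \in M$, and verify finite satisfiability precisely from the failure of the collection bound. For clause $(4)$, the extension must additionally be $\Sigma_{n+2}$-elementary, which requires enriching the type with the $\Sigma_{n+2}$-diagram of $M$ and checking consistency --- this is where I expect the bookkeeping to be heaviest.

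**Handling clause $(3)$.**

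Finally, clause $(3)$ weakens the hypothesis on $N$ to $\Sigma_{n+1}$-elementarity plus $\Coll(\Sigma_n)$, at the cost of assuming $M \models \Coll(\Sigma_n)$ (for $n \geq 1$). I would derive $(3)$ from $(2)$ by noting that $\CS(\Sigma_{n+1})$ implies $\Coll(\Sigma_n)$ (via Proposition~\ref{OProp1} applied one level down, or directly), and that the forward argument above only genuinely used $\Sigma_{n+1}$-elementarity of $M$ in $N$ together with enough collection in $N$ to manipulate bounded $\Sigma_n$ quantifiers --- so the full elementarity $M \prec N$ can be relaxed. Conversely, to recover $(2)$ from $(3)$, the compactness construction can be arranged to land inside $\PA^- + \Coll(\Sigma_n)$ (for instance by taking $N$ itself to be a model of $\Coll(\Sigma_n)$, which is automatic if $M$ already satisfies it and the extension is cofinal-controlled). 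The separate statement for $n = 0$ avoids the $\Coll(\Sigma_0)$ hypothesis because $\Delta_0$-collection over $\PA^-$ behaves trivially and Proposition~\ref{GA0} already supplies $M \prec_{\Sigma_1} K$ for free; the remaining content is purely the $\Sigma_1$-level strong collection, which matches the base case of the induction.
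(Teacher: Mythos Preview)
Your compactness construction for the converse direction has the roles of the variables reversed, and this is a genuine gap. You propose a fresh constant $\vec{c}$ standing for the \emph{witness} $\vec{y}$, with the type asserting $\exists \vec{x} < \vec{u}\, \varphi(\vec{x}, \vec{c})$ together with $\vec{c} > \vec{b}$ for every $\vec{b} \in M$. This type is indeed finitely satisfiable, but realizing it in some $N \succ M$ does not produce a violation of $\sup_N(M) \prec_{\Sigma_{n+1}} N$: the element $\vec{x}_0 < \vec{u}$ witnessing $\varphi(\vec{x}_0, \vec{c}^N)$ in $N$ may perfectly well have \emph{other} $\vec{y}$-witnesses lying inside $\sup_N(M)$, so $\sup_N(M) \models \exists \vec{y}\, \varphi(\vec{x}_0, \vec{y})$ cannot be ruled out. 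The correct move (which the paper carries out) is to introduce the constant for the \emph{bounded} variable $\vec{x}$ instead: take $\vec{\mathsf{c}} < \vec{u}$, assert $\exists \vec{y}\, \varphi(\vec{\mathsf{c}}, \vec{y})$, and add $\forall \vec{y} < \vec{b}\, \neg \varphi(\vec{\mathsf{c}}, \vec{y})$ for every $\vec{b} \in M$. Finite satisfiability follows directly from the negation of $\CSm(\Sigma_{n+1})$, and in the resulting $N$ one has $\vec{\mathsf{c}}^N \in \sup_N(M)$ with $N \models \exists \vec{y}\, \varphi(\vec{\mathsf{c}}^N, \vec{y})$ while no witness lies below any element of $M$; under the hypothesis $\sup_N(M) \prec_{\Sigma_{n+1}} N$ this forces the contradiction $\sup_N(M) \models \exists \vec{y}\, \varphi(\vec{\mathsf{c}}^N, \vec{y}) \land \forall \vec{y}\, \neg \varphi(\vec{\mathsf{c}}^N, \vec{y})$.

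On the forward direction $(2) \Rightarrow (1)$, your worry about bootstrapping $K \prec_{\Sigma_n} N$ via induction on $n$ is unnecessary. The paper sidesteps it using the Tarski--Vaught test: to prove $K \prec_{\Sigma_{n+1}} N$ it suffices to show that whenever $N \models \exists \vec{y}\, \varphi(\vec{a}, \vec{y})$ with $\varphi \in \Pi_n$ and $\vec{a} \in K$, there is a witness $\vec{d} \in K$ with $N \models \varphi(\vec{a}, \vec{d})$ --- evaluated in $N$, not in $K$. Strong collection in $M$, transferred to $N$, places such a witness below some $\vec{c} \in M \subseteq K$, and you are done with no induction. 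This also makes the passage to clause~(3) transparent: the strong-collection instance need only transfer as a $\Pi_{n+1}$ formula, which under the weaker hypothesis $M \prec_{\Sigma_{n+1}} N$ requires $\Coll(\Sigma_n)$ in both models to collapse the bounded quantifier in $\exists \vec{y} < \vec{c}\, \varphi$. Finally, note that the paper's cycle is $(1) \Rightarrow (2) \Rightarrow (3) \Rightarrow (4) \Rightarrow (1)$, with $(4) \Rightarrow (1)$ trivial and $(3) \Rightarrow (4)$ obtained because $\Coll(\Sigma_n)$ is $\Pi_{n+2}$-axiomatized and hence passes along $M \prec_{\Sigma_{n+2}} N$; your suggested direction $(1) \Rightarrow (4)$ is the non-trivial one and is never argued directly.
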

\begin{proof}
$(1 \Rightarrow 2)$: 
Suppose that $M$ satisfies the condition $\eex{n+1}$. 
It suffices to show that $M \models \CSm(\Sigma_{n+1})$ by Proposition \ref{OProp3}. 
Assume, towards a contradiction, that $M \not \models \CSm(\Sigma_{n+1})$. 
We then obtain some $\Sigma_{n+1}$ formula $\varphi(\vec{x}, \vec{y})$ and $\vec{a} \in M$ such that 
\begin{align}\label{CS}
	M \models \forall \vec{v}\, \exists \vec{x} < \vec{a}\, \bigl(\exists \vec{y}\, \varphi(\vec{x}, \vec{y}) \land \forall \vec{y} < \vec{v}\, \neg \varphi(\vec{x}, \vec{y}) \bigr). 
\end{align}
We prepare new constant symbols $\vec{\mathsf{c}}$. 
For each tuple $\vec{b} \in M$, let $T_{\vec{b}}$ be the $\LA \cup M \cup \{\vec{\mathsf{c}}\}$-theory defined by:
\[
	T_{\vec{b}} : = \ElemDiag(M) \cup \{\vec{\mathsf{c}} < \vec{a}\} \cup \{\exists \vec{y}\, \varphi(\vec{\mathsf{c}}, \vec{y}) \} \cup \{\forall \vec{y} < \vec{b}\, \neg \varphi(\vec{\mathsf{c}}, \vec{y})\}. 
\]
For such $\vec{b}$, by (\ref{CS}), we find $\vec{e} < \vec{a}$ such that 
\[
	M \models \exists \vec{y}\, \varphi(\vec{e}, \vec{y}) \land \forall \vec{y} < \vec{b}\, \neg \varphi(\vec{e}, \vec{y}). 
\]
This gives a model of $T_{\vec{b}}$ by taking $\vec{e}$ as the interpretation of the constant symbols $\vec{\mathsf{c}}$.
Hence, by the compactness theorem, we obtain a model of the theory
\[
	\ElemDiag(M) \cup \{\vec{\mathsf{c}} < \vec{a}\} \cup \{\exists \vec{y}\, \varphi(\vec{\mathsf{c}}, \vec{y}) \} \cup \{\forall \vec{y} < \vec{b}\, \neg \varphi(\vec{\mathsf{c}}, \vec{y}) \mid \vec{b} \in M\}, 
\]
and let $N$ be the restriction of the model to the language $\LA$. 
Then, $M \prec N$, and so we obtain that $\sup_N(M) \prec_{\Sigma_{n+1}} N$ by the condition $\eex{n+1}$.  
Since $N \models \vec{\mathsf{c}}^N < \vec{a}$, we have $\vec{\mathsf{c}}^N \in \sup_N(M)$. 
We then obtain $\sup_N(M) \models \exists \vec{y}\, \varphi(\vec{\mathsf{c}}^N, \vec{y})$ because $N \models \exists \vec{y}\, \varphi(\vec{\mathsf{c}}^N, \vec{y})$ and $\sup_N(M) \prec_{\Sigma_{n+1}} N$. 
On the other hand, since $N \models \forall \vec{y} < \vec{b}\, \neg \varphi(\vec{\mathsf{c}}^N, \vec{y})$ for all $\vec{b} \in M$, we obtain $N \models \neg \varphi(\vec{\mathsf{c}}^N, \vec{k})$ for all $\vec{k} \in \sup_N(M)$ because $M \cf \sup_N(M)$. 
Hence, $\sup_N(M) \models \forall \vec{y} \, \neg \varphi(\vec{\mathsf{c}}^N, \vec{y})$ because $\sup_N(M) \prec_{\Sigma_{n+1}} N$ again. 
This is a contradiction. 

$(2 \Rightarrow 3)$: Suppose that $M$ is a model of $\CS(\Sigma_{n+1})$. 
Then, $M \models \Coll(\Sigma_n)$ by Proposition \ref{OProp1}. 
Let $N$ be any model of $\PA^-$ with $M \prec_{\Sigma_{n+1}} N$. 
In the case of $n \geq 1$, we further assume $N \models \Coll(\Sigma_n)$. 
We would like to show $\sup_N(M) \prec_{\Sigma_{n+1}} N$. 
By the Tarski--Vaught test (cf.~\cite[Exercise 7.4]{Kaye}), it suffices to show that for any $\Pi_n$ formula $\varphi(\vec{x}, \vec{y})$ and any $\vec{a} \in \sup_N(M)$, if $N \models \exists \vec{y}\, \varphi(\vec{a}, \vec{y})$, then $N \models \varphi(\vec{a}, \vec{d})$ for some $\vec{d} \in \sup_N(M)$. 

Suppose that $N \models \exists \vec{y}\, \varphi(\vec{a}, \vec{y})$ for some $\Pi_n$ formula $\varphi(\vec{x}, \vec{y})$ and $\vec{a} \in \sup_N(M)$. 
We find some $\vec{b} \in M$ such that $\sup_N(M) \models \vec{a} < \vec{b}$. 
Since $M \models \CS(\Sigma_{n+1})$, we have 
\[
	M \models \forall \vec{u}\, \exists \vec{v}\, \forall \vec{x} < \vec{u}\, \bigl(\exists \vec{y}\, \varphi(\vec{x}, \vec{y}) \to \exists \vec{y} < \vec{v}\, \varphi(\vec{x}, \vec{y}) \bigr). 
\]
So, we find $\vec{c} \in M$ such that
\[
	M \models \forall \vec{x} < \vec{b}\, \bigl(\exists \vec{y}\, \varphi(\vec{x}, \vec{y}) \to \exists \vec{y} < \vec{c}\, \varphi(\vec{x}, \vec{y}) \bigr). 
\]
In the case of $n = 0$, the formula $\exists \vec{y} < \vec{w}\, \varphi(\vec{x}, \vec{y})$ is a $\Delta_0$ formula. 
In the case of $n \geq 1$, the formula $\exists \vec{y} < \vec{w}\, \varphi(\vec{x}, \vec{y})$ may be treated as a $\Pi_n$ formula in both $M$ and $N$ because they are models of $\PA^- + \Coll(\Sigma_n)$. 
    So, in either case, the formula $\forall \vec{x} < \vec{z}\, \bigl(\exists \vec{y}\, \varphi(\vec{x}, \vec{y}) \to \exists \vec{y} < \vec{w}\, \varphi(\vec{x}, \vec{y}) \bigr)$ is equivalent to a $\Pi_{n+1}$ formula. 
    Therefore, it follows from $M \prec_{\Sigma_{n+1}} N$ that
\[
	N \models \forall \vec{x} < \vec{b}\, \bigl(\exists \vec{y}\, \varphi(\vec{x}, \vec{y}) \to \exists \vec{y} < \vec{c}\, \varphi(\vec{x}, \vec{y}) \bigr). 
\]
Since $N \models \vec{a} < \vec{b}$ and $N \models \exists \vec{y}\, \varphi(\vec{a}, \vec{y})$, we obtain $N \models \exists \vec{y} < \vec{c}\, \varphi(\vec{a}, \vec{y})$. 
Hence, we find some $\vec{d} < \vec{c}$ such that $N \models \varphi(\vec{a}, \vec{d})$. 
Then, $\vec{d} \in \sup_N(M)$. 
This completes the proof. 

$(3 \Rightarrow 4)$: 
    Suppose that $M$ satisfies the condition stated in Clause 3. 
    In the case of $n = 0$, we are done. 
    So, we may assume $n \geq 1$. 
    Let $N \models \PA^-$ be such that $M \prec_{\Sigma_{n+2}} N$. 
    Since $M \models \Coll(\Sigma_n)$ and the theory $\PA^- + \Coll(\Sigma_n)$ is axiomatized by a set of $\Pi_{n+2}$ sentences, we have $N \models \Coll(\Sigma_n)$. 
    So, we conclude $\sup_N(M) \prec_{\Sigma_{n+1}} N$ by Clause 3. 

$(4 \Rightarrow 1)$: Trivial. 
\end{proof}

We immediately obtain the following corollary: 

\begin{cor}\label{CorMT1}
For any $M \models \mathbf{I}\Delta_{0}$ and $n \geq 0$, $M \models \mathbf{I}\Sigma_{n}$ if and only if $M$ satisfies the condition $\eex{n}$. 
\end{cor}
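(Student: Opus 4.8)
The plan is to derive the corollary directly from the equivalence $(1 \Leftrightarrow 2)$ of Theorem \ref{MT1} together with the already-recorded fact that $\mathbf{S}\Sigma_m := \mathbf{I}\Delta_0 + \CS(\Sigma_m)$ is deductively equivalent to $\mathbf{I}\Sigma_m$ for every $m \geq 1$ (cf.~\cite[Theorem 2.23]{HP} and \cite[Lemma 10.6 and Exercise 10.6]{Kaye}). So the whole argument is essentially bookkeeping across two ranges of $n$, since all the substantive model-theoretic content has been packed into Theorem \ref{MT1}.

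First I would dispose of the degenerate case $n = 0$. Here $\mathbf{I}\Sigma_0$ is just $\mathbf{I}\Delta_0$, which holds by hypothesis, while every model of $\PA^-$ trivially satisfies $\eex{0}$. Hence both sides of the stated equivalence are automatically true and there is nothing to prove in this case.

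For $n \geq 1$, write $n = m+1$ with $m \geq 0$, so that $\eex{n} = \eex{m+1}$ and $\CS(\Sigma_n) = \CS(\Sigma_{m+1})$. By the equivalence $(1 \Leftrightarrow 2)$ of Theorem \ref{MT1}, $M$ satisfies $\eex{m+1}$ if and only if $M \models \CS(\Sigma_{m+1})$, that is, if and only if $M \models \CS(\Sigma_n)$. Combining this with the hypothesis $M \models \mathbf{I}\Delta_0$, the condition $\eex{n}$ becomes equivalent to $M \models \mathbf{I}\Delta_0 + \CS(\Sigma_n) = \mathbf{S}\Sigma_n$. Finally, invoking $\mathbf{S}\Sigma_n \equiv \mathbf{I}\Sigma_n$ (valid precisely because $n \geq 1$) turns this into $M \models \mathbf{I}\Sigma_n$, which is exactly the desired equivalence.

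There is no real obstacle here, as the only points requiring care are organizational. One must treat $n = 0$ separately, because the equivalence $\mathbf{S}\Sigma_n \equiv \mathbf{I}\Sigma_n$ is only available for $n \geq 1$ and the base case $\eex{0}$ must instead be handled by its triviality. One should also note that the hypothesis $M \models \mathbf{I}\Delta_0$ is genuinely used only in the direction $\eex{n} \Rightarrow \mathbf{I}\Sigma_n$, where it supplies the $\mathbf{I}\Delta_0$ half needed to form $\mathbf{S}\Sigma_n$; in the converse direction $M \models \mathbf{I}\Sigma_n$ already entails both $\mathbf{I}\Delta_0$ and $\CS(\Sigma_n)$, and then $\eex{n}$ follows from Theorem \ref{MT1}.
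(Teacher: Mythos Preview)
Your argument is correct and follows essentially the same route as the paper: handle $n=0$ by triviality, and for $n \geq 1$ combine Theorem~\ref{MT1} with the known equivalence of $\mathbf{I}\Delta_0 + \CS(\Sigma_n)$ and $\mathbf{I}\Sigma_n$. Your more careful tracking of indices (writing $n = m+1$) and your remark on where the hypothesis $M \models \mathbf{I}\Delta_0$ is actually used are welcome clarifications, but do not deviate from the paper's approach.
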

\begin{proof}
The equivalence for $n = 0$ is trivial. 
The equivalence for $n \geq 1$ follows from Theorem \ref{MT1} because $\mathbf{I}\Delta_{0} + \CS(\Sigma_{n+1})$ is equivalent to $\mathbf{I}\Sigma_{n+1}$. 
\end{proof}

The following refinement of Mijajlovi{\'c}'s and Kaye's theorems (Theorems \ref{Mijajlovic1} and \ref{Kaye3}) follows from Theorem \ref{MT1}. 

\begin{cor}Let $M, N \models \PA^-$ and $n \geq 1$. 
\begin{enumerate}
    \item If $M \models \CS(\Sigma_{1})$ and $M \prec_{\Sigma_{1}} N$, then $\sup_N(M) \prec_{\Sigma_{1}} N$. 
    
    \item If $M \models \CS(\Sigma_{n+1})$, $N \models \Coll(\Sigma_n)$, and $M \prec_{\Sigma_{n+1}} N$, then $\sup_N(M) \prec_{\Sigma_{n+1}} N$. 
\end{enumerate}
\end{cor}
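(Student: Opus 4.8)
The plan is to read off both clauses directly from the implication $(2) \Rightarrow (3)$ of Theorem \ref{MT1}, after matching the index $n$ of the theorem with the index appearing in each clause. The point is that clause (3) of Theorem \ref{MT1} is, up to a shift of index, almost verbatim the conclusion we are asked to prove, so once $M \models \CS(\Sigma_{n+1})$ is known the corollary requires only that we specialize the universally quantified $N$ in clause (3) to the particular $N$ at hand.

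For the first clause, I would apply Theorem \ref{MT1} with the theorem's index equal to $0$, so that $\CS(\Sigma_{n+1})$ becomes $\CS(\Sigma_1)$. The hypothesis $M \models \CS(\Sigma_1)$ is precisely condition (2), hence condition (3) holds; in the case $n = 0$ this condition reads: for any $N \models \PA^-$, if $M \prec_{\Sigma_1} N$ then $\sup_N(M) \prec_{\Sigma_1} N$. Instantiating this at the given $N$ (which by assumption satisfies $M \prec_{\Sigma_1} N$) yields $\sup_N(M) \prec_{\Sigma_1} N$, as required. Note that no collection hypothesis on $N$ is needed here, matching the fact that the $n = 0$ version of clause (3) carries none.

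For the second clause, I would apply Theorem \ref{MT1} with the theorem's index equal to the given $n \geq 1$. From $M \models \CS(\Sigma_{n+1})$ (condition (2)) we obtain condition (3) in its $n \geq 1$ form: for every $N \models \PA^- + \Coll(\Sigma_n)$ with $M \prec_{\Sigma_{n+1}} N$ one has $\sup_N(M) \prec_{\Sigma_{n+1}} N$. The three hypotheses of the clause --- $M \models \CS(\Sigma_{n+1})$, $N \models \Coll(\Sigma_n)$ and $M \prec_{\Sigma_{n+1}} N$ --- are exactly what is needed to invoke this for our $N$; in particular the side condition $N \models \Coll(\Sigma_n)$ in the corollary is precisely the restriction on $N$ that appears in clause (3). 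The conclusion $\sup_N(M) \prec_{\Sigma_{n+1}} N$ follows immediately.

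There is essentially no obstacle in this argument: all the content sits in Theorem \ref{MT1}, and the only care required is the bookkeeping of the index shift (the corollary's $\CS(\Sigma_1)$ corresponds to the $n = 0$ instance of the theorem, while its $\CS(\Sigma_{n+1})$ for $n \geq 1$ corresponds to the $n \geq 1$ instance) and the observation that the hypothesis $N \models \Coll(\Sigma_n)$ in the second clause is not an extra assumption but exactly the domain restriction built into clause (3).
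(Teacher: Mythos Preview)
Your proposal is correct and matches the paper's own approach: the corollary is stated without proof, merely as a direct consequence of the implication $(2)\Rightarrow(3)$ in Theorem~\ref{MT1}, which is precisely the specialization you carry out.
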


As an application of Theorem \ref{MT1}, we prove the following theorem whose first clause is a refinement of Theorem \ref{Kaye1} and whose second clause is a refinement of the first clause of Theorem \ref{BW}. 

\begin{thm}\label{Appl1}
Let $n \geq 0$ and $M, K \models \PA^-$ be such that $M \dcf K$. 
\begin{enumerate}
    \item If $M \models \Coll(\Sigma_{n+1})$, then $M \prec_{\Sigma_{n+2}} K$. 
    \item If $M \models \CS(\Sigma_{n+1})$, then $K \models \CS(\Sigma_{n+1})$. 
\end{enumerate}
\end{thm}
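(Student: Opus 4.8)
The plan is to prove Clause~1 by a Gaifman-style induction on the $\Sigma$-level and then to deduce Clause~2 from it. For Clause~1 I will show $M \prec_{\Sigma_k} K$ for all $1 \le k \le n+2$ by induction on $k$. The base case $M \prec_{\Sigma_1} K$ is exactly Proposition~\ref{GA0}, since $M \dcf K$. For the inductive step, assume $M \prec_{\Sigma_k} K$ (hence also $M \prec_{\Pi_k} K$) for some $k$ with $1 \le k \le n+1$, and note that $M \models \Coll(\Sigma_k)$ because $\Coll(\Sigma_{n+1}) \vdash \Coll(\Sigma_k)$ for $k \le n+1$. Fix a $\Sigma_{k+1}$ formula $\exists \vec y\, \psi(\vec x, \vec y)$ with $\psi \in \Pi_k$ and $\vec a \in M$. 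The upward direction is immediate: a witness in $M$ for $M \models \exists \vec y\, \psi(\vec a, \vec y)$ is a $\Pi_k$ instance that transfers to $K$ by $M \prec_{\Pi_k} K$, needing neither collection nor cofinality.

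The downward direction is where cofinality and collection enter. Suppose $K \models \exists \vec y\, \psi(\vec a, \vec y)$. Since $M \cf K$, there is $\vec b \in M$ with $K \models \exists \vec y < \vec b\, \psi(\vec a, \vec y)$. By the bounded quantifier elimination proposition stated above (cf.\ Kaye \cite[Proposition 7.1]{Kaye}), applied over $\PA^- + \Coll(\Sigma_k)$, there is a $\Pi_k$ formula $\psi''(\vec x, \vec z)$ with $\PA^- + \Coll(\Sigma_k) \vdash \exists \vec y < \vec z\, \psi \leftrightarrow \psi''$; crucially, one checks that the implication $\exists \vec y < \vec z\, \psi \to \psi''$ is already logically valid, the collection hypothesis being needed only for the converse. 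Hence $K \models \psi''(\vec a, \vec b)$ --- even though $K$ need not satisfy any collection scheme --- and, as $\psi'' \in \Pi_k$, the inductive hypothesis $M \prec_{\Pi_k} K$ gives $M \models \psi''(\vec a, \vec b)$. The full equivalence does hold in $M$, so $M \models \exists \vec y < \vec b\, \psi(\vec a, \vec y)$, and in particular $M \models \exists \vec y\, \psi(\vec a, \vec y)$. This completes the induction and yields $M \prec_{\Sigma_{n+2}} K$, refining Theorem~\ref{Kaye1}.

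For Clause~2 I will use Clause~1 to reduce the problem to a single transfer. Since $M \models \CS(\Sigma_{n+1})$ implies $M \models \Coll(\Sigma_{n+1})$ by Proposition~\ref{OProp1}, Clause~1 gives $M \prec_{\Sigma_{n+2}} K$. By Proposition~\ref{OProp3} it suffices to show $K \models \CSm(\Sigma_{n+1})$. Fix a $\Sigma_{n+1}$ formula $\varphi(\vec x, \vec y)$ and $\vec a \in K$; by cofinality choose $\vec a' \in M$ with $K \models \vec a < \vec a'$. Since $M \models \CSm(\Sigma_{n+1})$, there is $\vec v_0 \in M$ with $M \models \forall \vec x < \vec a'\, (\exists \vec y\, \varphi(\vec x, \vec y) \to \exists \vec y < \vec v_0\, \varphi(\vec x, \vec y))$. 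The key observation is that this formula (with parameters $\vec a', \vec v_0$) is $\Pi_{n+2}$: its matrix $\exists \vec y\, \varphi \to \exists \vec y < \vec v_0\, \varphi$ is a Boolean combination of $\Sigma_{n+1}$ formulas, hence lies in $\Pi_{n+2}$, and prefixing a bounded universal quantifier keeps it in $\Pi_{n+2}$. Therefore $M \prec_{\Sigma_{n+2}} K$ (equivalently $M \prec_{\Pi_{n+2}} K$) transfers it to $K$, and restricting from $\vec x < \vec a'$ to $\vec x < \vec a$ shows that $\vec v_0$ witnesses the required instance of $\CSm(\Sigma_{n+1})$ at $\vec a$ in $K$.

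The main obstacle is the downward direction of Clause~1: because $K$ is assumed only to model $\PA^-$, none of the collection-dependent manipulations are available inside $K$, so the whole argument must be arranged so that only the logically valid half of the bounded quantifier elimination is invoked in $K$, with the collection-dependent half confined to $M$. Respecting this asymmetry is exactly what lets cofinality supply the bound $\vec b$ while the induction climbs one $\Sigma$-level per application of $\Coll(\Sigma_k)$, and it is the point at which our hypothesis is genuinely weaker than the $\mathbf{I}\Delta_0 + \Coll$ used in Theorem~\ref{Kaye1}. The remaining steps --- the complexity bookkeeping in Clause~2 and the observations that $M \models \Coll(\Sigma_k)$ for $k \le n+1$ --- are routine.
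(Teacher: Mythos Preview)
Your argument for Clause~2 is correct and is essentially the direct argument given in the paper's footnote. The gap is in your inductive proof of Clause~1, specifically at the step $k \to k+1$ for $k \ge 2$.

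You claim that for $\psi \in \Pi_k$ the implication $\exists \vec y < \vec z\,\psi \to \psi''$ is logically valid, with only the converse needing $\Coll(\Sigma_k)$. This is true for $k = 1$ but fails already for $k = 2$. Take $\psi = \forall w_1\,\exists w_2\,\delta$ with $\delta \in \Delta_0$. Unwinding Kaye's construction, the $\Pi_2$ formula $\psi''$ is
\[
\forall v_1\,\exists y < z\,\exists v_2\,\forall w_1 < v_1\,\exists w_2 < v_2\,\delta.
\]
The passage from $\exists y < z\,\psi$ to $\forall v_1\,\exists y < z\,\forall w_1 < v_1\,\exists w_2\,\delta$ is indeed logical (restrict $\forall w_1$ to $\forall w_1 < v_1$), but the further passage to $\psi''$ --- producing the bound $v_2$ on the witnesses $w_2$ --- is exactly an instance of $\Coll(\Sigma_1)$, which $K$ is not assumed to satisfy. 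In fact for $k \ge 2$ \emph{neither} direction of the equivalence is provable in $\PA^-$ alone: the converse $\psi'' \to \exists y < z\,\psi$ needs $\Coll(\Sigma_2)$. The intermediate formula $\forall v_1\,\exists y < z\,\forall w_1 < v_1\,\exists w_2\,\delta$ does satisfy your asymmetry, but it is not literally $\Pi_2$, so $M \prec_{\Pi_2} K$ cannot transfer it. Your careful remark that ``only the logically valid half \dots\ is invoked in $K$'' thus fails to hold at levels $k \ge 2$, and the induction breaks down there.

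The paper circumvents exactly this obstacle by running a \emph{simultaneous} induction on $n$ for the two clauses: at stage $n$ one first proves Clause~2, obtaining $K \models \CS(\Sigma_{n+1})$ and hence $K \models \Coll(\Sigma_{n+1})$; then, in proving Clause~1 at stage $n+1$, both $M$ and $K$ satisfy $\Coll(\Sigma_{n+1})$, so the formula $\exists \vec x < \vec b\,\forall \vec y < \vec c\,\varphi$ (with $\varphi \in \Sigma_{n+1}$) is genuinely $\Sigma_{n+1}$ in both models and transfers via the inductive hypothesis $M \prec_{\Sigma_{n+2}} K$. Your decoupled strategy --- prove Clause~1 outright, then deduce Clause~2 --- cannot work as written precisely because some amount of collection in $K$ is needed to drive the induction for Clause~1, and Clause~2 is what supplies it.
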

\begin{proof}
The case of $n = 0$ for Clause 1 is exactly Clause 1 of Theorem \ref{Kaye1}, and so we are done. 
We simultaneously prove the following two statements by induction on $n$. 
\begin{enumerate}
    \item If $M \models \Coll(\Sigma_{n+2})$, then $M \prec_{\Sigma_{n+3}} K$. 
    \item If $M \models \CS(\Sigma_{n+1})$, then $K \models \CS(\Sigma_{n+1})$. 
\end{enumerate}
We assume that these statements hold for all $k < n$. 

Firstly, we prove Clause 2. 
Suppose $M \models \CS(\Sigma_{n+1})$. 
We have $M \models \Coll(\Sigma_{n+1})$ by Proposition \ref{OProp1}. 
By the induction hypothesis for Clause 1, we obtain $M \prec_{\Sigma_{n+2}} K$. 
Let $N \models \PA^-$ be any model such that $K \prec N$. 
Then, we have $M \prec_{\Sigma_{n+2}} N$. 
Since $M \models \CS(\Sigma_{n+1})$, by Theorem \ref{MT1}, we obtain $\sup_N(M) \prec_{\Sigma_{n+1}} N$. 
Since $M \cf K$, we get $\sup_N(M) = \sup_N(K)$, and thus $\sup_N(K) \prec_{\Sigma_{n+1}} N$. 
We have proved that $K$ satisfies the condition $\eex{n+1}$. 
By Theorem \ref{MT1} again, we obtain $K \models \CS(\Sigma_{n+1})$. 
\footnote{The following direct argument of this part, which does not use Theorem \ref{MT1}, is due to Tin Lok Wong: Suppose $M \models \CS(\Sigma_{n+1})$. 
Let $\vec{a} \in K$ be any elements and $\varphi(\vec{x}, \vec{y})$ be any $\Sigma_{n+1}$ formula. 
Since $M \cf K$, we find $\vec{b} \in M$ such that $\vec{a} < \vec{b}$. 
Then, for some $\vec{c} \in M$, we have $M \models \forall \vec{x} < \vec{b}\, \bigl(\exists \vec{y}\, \varphi(\vec{x}, \vec{y}) \to \exists \vec{y} < \vec{c}\, \varphi(\vec{x}, \vec{y}) \bigr)$. 
This formula is logically equivalent to some $\Pi_{n+2}$ formula, and so it is also true in $K$ because $M \prec_{\Sigma_{n+2}} K$ by the induction hypothesis. 
In particular, $K \models \forall \vec{x} < \vec{a}\, \bigl(\exists \vec{y}\, \varphi(\vec{x}, \vec{y}) \to \exists \vec{y} < \vec{c}\, \varphi(\vec{x}, \vec{y}) \bigr)$. 
We have shown that $K \models \CSm(\Sigma_{n+1})$. 
By Proposition \ref{OProp3}, we conclude $K \models \CS(\Sigma_{n+1})$.}

Secondly, we prove Clause 1. 
Suppose $M \models \Coll(\Sigma_{n+2})$. 
Then, $M \models \CS(\Sigma_{n+1})$ by Proposition \ref{w_to_s}. 
We have already proved that $K \models \CS(\Sigma_{n+1})$ in Clause 2. 
Let $\vec{a} \in M$ and $\varphi(\vec{x}, \vec{y}, \vec{w})$ be any $\Sigma_{n+1}$ formula such that $K \models \exists \vec{x}\, \forall \vec{y}\, \varphi(\vec{x}, \vec{y}, \vec{a})$. 
Since $M \cf K$, there exist $\vec{b} \in M$ such that for all $\vec{c} \in M$, we have $K \models \exists \vec{x} < \vec{b}\, \forall \vec{y} < \vec{c}\, \varphi(\vec{x}, \vec{y}, \vec{a})$. 
Since both $M$ and $K$ are models of $\Coll(\Sigma_{n+1})$, the above formula can be treated as a $\Sigma_{n+1}$ formula. 
By the induction hypothesis, we have $M \prec_{\Sigma_{n+2}} K$. 
Then, $M \models \exists \vec{x} < \vec{b}\, \forall \vec{y} < \vec{c}\, \varphi(\vec{x}, \vec{y}, \vec{a})$ and hence $M \models \forall \vec{v}\, \exists \vec{x} < \vec{b}\, \forall \vec{y} < \vec{v}\, \varphi(\vec{x}, \vec{y}, \vec{a})$. 
By applying $\Coll(\Sigma_{n+2})$, we get $M \models \exists \vec{x} < \vec{b}\, \forall \vec{y}\, \varphi(\vec{x}, \vec{y}, \vec{a})$. 
So, we conclude $M \models \exists \vec{x} \, \forall \vec{y}\, \varphi(\vec{x}, \vec{y}, \vec{a})$. 
\end{proof}



\section{Collection schemes}\label{Sec:Coll}

It follows from Theorem \ref{Appl1} that every model of $\PA^- + \Coll(\Sigma_{n+1})$ satisfies the condition $\COF{n+2}$. 
Continuing this line of observation, we prove the following theorem on the model theoretic characterization of $\Coll(\Sigma_{n+1})$. 

\begin{thm}\label{MT2}
For any $M \models \PA^-$ and $n \geq 0$, the following are equivalent: 
\begin{enumerate}
     \item $M \models \Coll(\Sigma_{n+1})$. 

    \item $M$ satisfies the conditions $\eex{n}$ and $\COF{n+2}$. 

	\item $M$ satisfies the conditions $\eex{n}$ and $\cof{n+2}$.  

\end{enumerate}
\end{thm}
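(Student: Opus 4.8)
The plan is to prove the cycle $(1 \Rightarrow 2 \Rightarrow 3 \Rightarrow 1)$, using Theorems \ref{MT1} and \ref{Appl1} for the first two implications and reserving a compactness argument for the last. For $(1 \Rightarrow 2)$, the condition $\COF{n+2}$ is immediate from Theorem \ref{Appl1}.1, which already yields $M \prec_{\Sigma_{n+2}} K$ whenever $M \models \Coll(\Sigma_{n+1})$ and $M \dcf K$. For $\eex{n}$, the case $n = 0$ is trivial, and for $n \geq 1$ I would first observe $\PA^- + \Coll(\Sigma_{n+1}) \vdash \CS(\Sigma_n)$: indeed $\Coll(\Sigma_{n+1})$ is equivalent to $\CW(\Sigma_{n+1})$ (Proposition \ref{OProp2}), which entails its parameter-free weakening $\CWm(\Sigma_{n+1})$, and the latter proves $\CS(\Sigma_n)$ by Proposition \ref{w_to_s}; then Theorem \ref{MT1} (at parameter $n-1$) turns $M \models \CS(\Sigma_n)$ into $\eex{n}$. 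For $(2 \Rightarrow 3)$, since $\eex{n}$ is common to both conditions, I only need that $\COF{n+2}$ implies $\cof{n+2}$: for any $N \models \PA^-$ with $M \prec N$ one has $M \cf \sup_N(M)$, and $\Delta_0$-absoluteness along $\sup_N(M) \ee N$ combined with $M \prec N$ gives $M \prec_{\Delta_0} \sup_N(M)$, so $M \dcf \sup_N(M)$ and $\COF{n+2}$ applies, which is exactly $\cof{n+2}$.

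The substance is $(3 \Rightarrow 1)$, where I would argue by contradiction with a compactness construction modeled on the proof of Theorem \ref{MT1}. Suppose $M \not\models \Coll(\Sigma_{n+1})$. Absorbing the parameters $\vec{z}$, I obtain a $\Sigma_{n+1}$ formula $\varphi(\vec{x}, \vec{y})$ and $\vec{a} \in M$ with $M \models \forall \vec{x} < \vec{a}\, \exists \vec{y}\, \varphi(\vec{x}, \vec{y})$ while $M \models \forall \vec{v}\, \exists \vec{x} < \vec{a}\, \forall \vec{y} < \vec{v}\, \neg\varphi(\vec{x}, \vec{y})$. The second clause gives finite satisfiability (take a common bound in $M$ of finitely many $\vec{b}$'s), so compactness produces $N \models \PA^-$ with $M \prec N$ and a fresh constant tuple realized by some $\vec{c}$ with $N \models \vec{c} < \vec{a}$ and $N \models \forall \vec{y} < \vec{b}\, \neg\varphi(\vec{c}, \vec{y})$ for every $\vec{b} \in M$; in particular $\vec{c} \in \sup_N(M)$. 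Writing $K = \sup_N(M)$, the hypotheses give $M \prec_{\Sigma_{n+2}} K$ (from $\cof{n+2}$) and $K \prec_{\Sigma_n} N$ (from $\eex{n}$).

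The key step is to locate a witness for $\vec{c}$ inside $K$ and then push it up to $N$. The sentence $\forall \vec{x} < \vec{a}\, \exists \vec{y}\, \varphi(\vec{x}, \vec{y})$ is $\Pi_{n+2}$, so $M \prec_{\Sigma_{n+2}} K$ transfers it to $K$; evaluating at $\vec{c} < \vec{a}$ yields some $\vec{d} \in K$ with $K \models \varphi(\vec{c}, \vec{d})$. Since $\varphi$ is $\Sigma_{n+1}$ and $K \prec_{\Sigma_n} N$, a $\Sigma_{n+1}$ truth holding in $K$ with a witness in $K$ survives to $N$ (choose a $\Pi_n$ witness inside $K$ and transfer it upward along $\prec_{\Sigma_n}$), so $N \models \varphi(\vec{c}, \vec{d})$. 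But $\vec{d} \in K = \sup_N(M)$ forces $\vec{d} < \vec{b}$ for some $\vec{b} \in M$, and then the construction gives $N \models \neg\varphi(\vec{c}, \vec{d})$, a contradiction; hence $M \models \Coll(\Sigma_{n+1})$. I expect this transfer to be the delicate point: one must keep the witness $\vec{d}$ below $M$ while simultaneously lowering a $\Pi_{n+2}$ truth from $M$ to $K$ and raising a $\Sigma_{n+1}$ truth from $K$ to $N$, each at precisely the elementarity level supplied by the two hypotheses, and the bookkeeping of which quantifier complexity transfers in which direction is where the argument needs care.
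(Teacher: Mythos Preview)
Your proof is correct, and for $(1 \Rightarrow 2)$ and $(2 \Rightarrow 3)$ it matches the paper exactly. For $(3 \Rightarrow 1)$ the underlying idea is the same---use compactness to produce an elementary extension $N$ and a ``bad'' $\vec{c} < \vec{a}$ in $\sup_N(M)$ with no $\varphi$-witness in $\sup_N(M)$, then exploit $\eex{n}$ and $\cof{n+2}$---but the execution differs slightly. The paper argues the contrapositive rather than by contradiction: it does not use the premise $\forall \vec{x} < \vec{a}\, \exists \vec{y}\, \varphi$, takes a \emph{generic} $N$ with $\sup_N(M) \neq N$, and then applies the hypothesis $\forall \vec{v}\, \exists \vec{x} < \vec{a}\, \forall \vec{y} < \vec{v}\, \neg\varphi$ inside $N$ at a nonstandard bound $\vec{e}$ to locate $\vec{c}$. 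It then uses $\eex{n}$ first (to reflect $\neg\varphi(\vec{c},\vec{d})$, which is $\Pi_{n+1}$, from $N$ down to $K$ for all $\vec{d} \in K$), obtaining $K \models \exists \vec{x} < \vec{a}\, \forall \vec{y}\, \neg\varphi$, and finishes with $\cof{n+2}$ to pull this $\Sigma_{n+2}$ sentence back to $M$. Your route is the dual: a tailored compactness construction (as in Theorem~\ref{MT1}) to obtain $\vec{c}$ directly, then $\cof{n+2}$ first to push the $\Pi_{n+2}$ premise from $M$ to $K$, then $\eex{n}$ to lift the resulting $\Sigma_{n+1}$ witness back up to $N$ for a contradiction. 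Both arguments are equally short; the paper's version has the minor advantage of not needing the premise of the collection instance and of reusing the same generic-$N$ setup for the later Theorems~\ref{MT3} and~\ref{MT4}.
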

\begin{proof}
$(1 \Rightarrow 2)$: Suppose $M \models \Coll(\Sigma_{n+1})$. 
By Theorem \ref{Appl1}, $M$ satisfies $\COF{n+2}$. 
It suffices to prove that $M$ satisfies $\eex{n}$. 
The case $n = 0$ is trivial, and so we may assume $n > 0$. 
By Proposition \ref{w_to_s}, we have $M \models \CS(\Sigma_n)$. 
It follows from Theorem \ref{MT1} that $M$ satisfies $\eex{n}$. 

$(2 \Rightarrow 3)$: Trivial. 

$(3 \Rightarrow 1)$: Suppose that $M$ satisfies the conditions $\eex{n}$ and $\cof{n+2}$. 
We prove that the contrapositive of each instance of $\Coll(\Sigma_{n+1})$ holds in $M$. 
For any $\Sigma_{n+1}$ formula $\varphi(\vec{x}, \vec{y}, \vec{z})$ and $\vec{a}, \vec{b} \in M$, we assume
\[
	M \models \forall \vec{v}\, \exists \vec{x} < \vec{a}\, \forall \vec{y} < \vec{v}\, \neg \varphi(\vec{x}, \vec{y}, \vec{b}). 
\]
By the compactness argument, we obtain an $N \models \PA^-$ such that $M \prec N$ and $\sup_N(M) \neq N$. 
We fix some $\vec{e} \in N \setminus \sup_N(M)$. 
Since $M \prec N$, we have
\[
	N \models \forall \vec{v}\, \exists \vec{x} < \vec{a}\, \forall \vec{y} < \vec{v}\, \neg \varphi(\vec{x}, \vec{y}, \vec{b}),
\]
and so we find some $\vec{c} \in N$ with $\vec{c} < \vec{a}$ such that for all $\vec{d} \in N$ with $\vec{d} < \vec{e}$, we have $N \models \neg \varphi(\vec{c}, \vec{d}, \vec{b})$. 
Then, $\vec{c} \in \sup_N(M)$. 
Also, since $\vec{e} \in N \setminus \sup_N(M)$ and $\sup_N(M) \ee N$, we get that $N \models \neg \varphi(\vec{c}, \vec{d}, \vec{b})$ holds for all $\vec{d} \in \sup_N(M)$. 
Since $\neg \varphi$ is a $\Pi_{n+1}$ formula and $\sup_N(M) \prec_{\Sigma_{n}} N$ holds by the condition $\eex{n}$, we obtain $\sup_N(M) \models \neg \varphi(\vec{c}, \vec{d}, \vec{b})$. 
Thus, $\sup_N(M) \models \exists \vec{x} < \vec{a}\, \forall \vec{y}\, \neg \varphi(\vec{x}, \vec{y}, \vec{b})$. 
By the condition $\cof{n+2}$, we have $M \prec_{\Sigma_{n+2}} \sup_N(M)$. 
Thus, we conclude $M \models \exists \vec{x} < \vec{a}\, \forall \vec{y}\, \neg \varphi(\vec{x}, \vec{y}, \vec{b})$. 
We have proved that $M \models \Coll(\Sigma_{n+1})$. 
\end{proof}

We immediately obtain the following corollary: 

\begin{cor}\label{cofCOF}
    For any $n \geq 0$ and $M \models \PA^-$ satisfying $\eex{n}$, $M$ satisfies $\cof{n+2}$ if and only if $M$ satisfies $\COF{n+2}$. 
\end{cor}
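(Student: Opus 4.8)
The plan is to derive the equivalence directly from Theorem \ref{MT2}, since the corollary merely extracts the two nontrivial routes through the three equivalent conditions of that theorem under the standing hypothesis $\eex{n}$. One direction requires no real work: it was already observed in the text that every model satisfying $\COF{n+2}$ also satisfies $\cof{n+2}$. Hence if $M$ satisfies $\COF{n+2}$ it satisfies $\cof{n+2}$, and this holds regardless of whether $\eex{n}$ is assumed.

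For the converse, I would assume that $M$ satisfies both $\eex{n}$ and $\cof{n+2}$. This is exactly Clause 3 of Theorem \ref{MT2}, so that theorem immediately yields $M \models \Coll(\Sigma_{n+1})$. Feeding this conclusion back into the implication $(1 \Rightarrow 2)$ of Theorem \ref{MT2}, I obtain that $M$ satisfies $\eex{n}$ and $\COF{n+2}$; in particular $M$ satisfies $\COF{n+2}$, as required. Combining the two directions gives the stated biconditional.

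There is essentially no obstacle here, as the entire content resides in Theorem \ref{MT2}: the corollary is just the observation that, once $\eex{n}$ is fixed, both $\cof{n+2}$ and $\COF{n+2}$ are each equivalent (in conjunction with $\eex{n}$) to $\Coll(\Sigma_{n+1})$, and so they are interchangeable. The only point demanding attention is to keep the hypothesis $\eex{n}$ in play throughout the argument, since it is precisely this assumption that upgrades $\cof{n+2}$ to the collection scheme and thereby back to $\COF{n+2}$.
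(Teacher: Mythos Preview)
Your proof is correct and follows exactly the approach the paper intends: the corollary is stated as an immediate consequence of Theorem \ref{MT2}, and you have simply unpacked the passage $(3)\Rightarrow(1)\Rightarrow(2)$ together with the trivial implication $\COF{n+2}\Rightarrow\cof{n+2}$.
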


By combining Corollary \ref{cofCOF} and Proposition \ref{GA0}, we obtain the following refinement of Proposition \ref{eexcof}. 

\begin{cor}\label{eexcof2}
For any $n \geq 0$ and $M \models \PA^-$, if $M$ satisfies $\eex{n}$, then $M$ also satisfies $\COF{n+1}$. 
\end{cor}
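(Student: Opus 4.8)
The plan is to deduce the corollary from the three earlier results by a short case analysis on $n$, exploiting that the conclusion $\COF{n+1}$ is a genuine strengthening of the conclusion $\cof{n+1}$ of Proposition \ref{eexcof}. Throughout I would keep in mind that, since every $\Sigma_{n-1}$ formula is (as a syntactic class, modulo dummy quantifiers) a $\Sigma_n$ formula, agreement on $\Sigma_n$ formulas entails agreement on $\Sigma_{n-1}$ formulas; hence the condition $\eex{n}$ downward implies $\eex{n-1}$. This trivial monotonicity is the only observation I need beyond the cited results.

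For the base case $n = 0$, there is essentially nothing to do: the claim is that $M$ satisfies $\COF{1}$, and this holds for \emph{every} model of $\PA^-$ by Proposition \ref{GA0}, irrespective of the (vacuously satisfied) hypothesis $\eex{0}$.

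For $n \geq 1$, I would argue in three steps. First, from the hypothesis $\eex{n}$ together with the monotonicity noted above, $M$ also satisfies $\eex{n-1}$. Second, Proposition \ref{eexcof} applied to $M$ (with its index equal to $n$) yields that $M$ satisfies $\cof{n+1}$. Third, I would invoke Corollary \ref{cofCOF} with the index $n-1$ in place of $n$: since $M$ satisfies $\eex{n-1}$ and $(n-1)+2 = n+1$, that corollary gives the equivalence of $\cof{n+1}$ and $\COF{n+1}$ for $M$. Combining the second and third steps, $M$ satisfies $\COF{n+1}$, as desired.

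I do not anticipate any serious obstacle, as the argument is purely a bookkeeping combination of Proposition \ref{GA0} (handling $n = 0$), Proposition \ref{eexcof}, and Corollary \ref{cofCOF} (handling $n \geq 1$). The only points requiring care are the index shift $m = n-1$ when applying Corollary \ref{cofCOF} — which is exactly what forces the separate treatment of $n = 0$, where that shift would be illegal — and the verification that $\eex{n}$ persists down to $\eex{n-1}$.
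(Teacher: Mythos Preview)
Your proof is correct and follows essentially the same approach as the paper, which simply states that the result follows by combining Corollary~\ref{cofCOF} and Proposition~\ref{GA0}. You have spelled out the details more carefully, in particular making explicit the use of Proposition~\ref{eexcof} and the monotonicity $\eex{n} \Rightarrow \eex{n-1}$ needed to invoke Corollary~\ref{cofCOF} at the shifted index $n-1$; these are exactly the ingredients the paper's one-line justification relies on implicitly.
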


\begin{rem}\label{RemMT2}
For models $M$ of $\mathbf{I}\Delta_0$, the implication $(3 \Rightarrow 1)$ of Theorem \ref{MT2} is immediately proved by using Theorem \ref{Clote} and Corollary \ref{CorMT1}. 
For, suppose $M \models \mathbf{I}\Delta_0$ and $M$ satisfies the conditions $\eex{n}$ and $\cof{n+2}$. 
By Corollary \ref{CorMT1}, $M$ is a model of $\mathbf{I}\Sigma_n$. 
We can easily find an $N \models \mathbf{I}\Sigma_n$ such that $M \prec N$ and $\sup_N(M) \neq N$ by using the compactness theorem. 
We have $M \prec_{\Sigma_{n+2}} \sup_N(M) \prec_{\Sigma_n} N$ by the conditions $\eex{n}$ and $\cof{n+2}$. 
By Theorem \ref{Clote}, we have $\sup_N(M) \models \mathbf{B}\Sigma_{n+1}$, and hence $M \models \mathbf{B}\Sigma_{n+1}$ because $\mathbf{B}\Sigma_{n+1}$ is axiomatized by a set of $\Pi_{n+3}$ sentences (cf.~\cite[Exercise 10.2.(a)]{Kaye}). 
\end{rem}

By refining the argument presented in Remark \ref{RemMT2}, we show that for models of $\mathbf{I}\Delta_0$ satisfying $\eex{n}$, the condition $\cof{n+2}$ is equivalent to some weaker conditions. 

\begin{prop}\label{cof_prop2}
Let $n \geq 0$. 
If $M \models \mathbf{I}\Delta_0$ satisfies $\eex{n}$, then the following are equivalent: 
\begin{enumerate}
    \item $M$ satisfies $\cof{n+2}$. 
    \item For any $N \models \PA^-$, if $M \prec N$, then there exists an $N' \models \PA^-$ such that $N \prec N'$ and $M \prec_{\Sigma_{n+2}} \sup_{N'}(M)$. 
    \item There exists an $N \models \PA^-$ such that $M \prec N$, $N \neq \sup_{N}(M)$ and $M \prec_{\Sigma_{n+2}} \sup_{N}(M)$. 
\end{enumerate}
\end{prop}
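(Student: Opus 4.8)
The plan is to establish the cycle $(1)\Rightarrow(2)\Rightarrow(3)\Rightarrow(1)$, keeping $M\models\mathbf{I}\Delta_0$ with $\eex{n}$ as a standing hypothesis. The implication $(1\Rightarrow 2)$ is immediate: given $N\models\PA^-$ with $M\prec N$, take $N'=N$; then $N\prec N'$ holds trivially and $M\prec_{\Sigma_{n+2}}\sup_{N'}(M)$ is exactly $\cof{n+2}$ applied to $N$.

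For $(2\Rightarrow 3)$, I would first manufacture a non-cofinal elementary extension of $M$ by the standard compactness argument: adjoin to $\ElemDiag(M)$ a fresh constant $\mathsf{c}$ together with the sentences asserting $\mathsf{c}>b$ for every $b\in M$. Every finite fragment is satisfiable in $M$ itself (interpret $\mathsf{c}$ as a sufficiently large element), so there is $N_0\models\PA^-$ with $M\prec N_0$ and some $e:=\mathsf{c}^{N_0}\in N_0\setminus\sup_{N_0}(M)$, whence $\sup_{N_0}(M)\neq N_0$. Applying hypothesis (2) to $N_0$ gives $N'\models\PA^-$ with $N_0\prec N'$ and $M\prec_{\Sigma_{n+2}}\sup_{N'}(M)$. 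Since $M\prec N_0\prec N'$ and $e$ lies above every element of $M$ in $N_0$, the same is true in $N'$, so $e\in N'\setminus\sup_{N'}(M)$ and $\sup_{N'}(M)\neq N'$. Hence $N:=N'$ witnesses (3).

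The heart of the argument is $(3\Rightarrow 1)$. Let $N$ be as in (3), and fix once and for all a tuple $\vec{e}$ of elements of $N\setminus\sup_N(M)$, each of which then lies above every element of $\sup_N(M)$. I claim this one extension already forces every instance of $\Coll(\Sigma_{n+1})$ in $M$, and I would verify the contrapositive exactly as in the $(3\Rightarrow 1)$ part of Theorem \ref{MT2}. Assuming $M\models\forall\vec{v}\,\exists\vec{x}<\vec{a}\,\forall\vec{y}<\vec{v}\,\neg\varphi(\vec{x},\vec{y},\vec{b})$ for a $\Sigma_{n+1}$ formula $\varphi$, transfer it to $N$ via $M\prec N$ and instantiate the bound $\vec{v}$ by $\vec{e}$, obtaining $\vec{c}<\vec{a}$ with $N\models\forall\vec{y}<\vec{e}\,\neg\varphi(\vec{c},\vec{y},\vec{b})$; then $\vec{c}\in\sup_N(M)$, and since $\vec{e}$ dominates $\sup_N(M)$ we get $N\models\neg\varphi(\vec{c},\vec{d},\vec{b})$ for every $\vec{d}\in\sup_N(M)$. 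Now $\neg\varphi$ is $\Pi_{n+1}$, so the downward transfer along $\sup_N(M)\prec_{\Sigma_n}N$ — which holds by $\eex{n}$ — yields $\sup_N(M)\models\forall\vec{y}\,\neg\varphi(\vec{c},\vec{y},\vec{b})$, hence $\sup_N(M)\models\exists\vec{x}<\vec{a}\,\forall\vec{y}\,\neg\varphi(\vec{x},\vec{y},\vec{b})$. This last sentence is $\Sigma_{n+2}$, so $M\prec_{\Sigma_{n+2}}\sup_N(M)$ from (3) brings it back down to $M$. As $\varphi,\vec{a},\vec{b}$ were arbitrary, $M\models\Coll(\Sigma_{n+1})$, and Theorem \ref{MT2} then delivers $\cof{n+2}$, i.e.\ condition (1).

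I expect the only genuine obstacle to be the bookkeeping in $(3\Rightarrow 1)$: recognizing that a \emph{single} witness $\vec{e}$ and a \emph{single} extension $N$ suffice uniformly across all formulas and parameters, so that one good extension validates the whole scheme $\Coll(\Sigma_{n+1})$. This is also precisely why the shortcut of Remark \ref{RemMT2} — passing through Theorem \ref{Clote} to obtain $\sup_N(M)\models\mathbf{B}\Sigma_{n+1}$ — is unavailable here: that route requires $N\models\mathbf{I}\Sigma_n$, whereas in (3) the extension $N$ is only assumed to model $\PA^-$. The direct computation above, by contrast, never uses any induction in $N$ and relies solely on $\eex{n}$ and the $\Sigma_{n+2}$-elementarity supplied by (3).
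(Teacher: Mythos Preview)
Your proof is correct, and the cycle $(1)\Rightarrow(2)\Rightarrow(3)\Rightarrow(1)$ is executed cleanly. For $(1\Rightarrow 2)$ and $(2\Rightarrow 3)$ you do exactly what the paper does. For $(3\Rightarrow 1)$, however, your approach differs from the paper's, and your closing remark about the Remark~\ref{RemMT2} shortcut being ``unavailable'' is mistaken.

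The paper's $(3\Rightarrow 1)$ proceeds precisely via that shortcut: from $M\models\mathbf{I}\Delta_0$ and $\eex{n}$ one has $M\models\mathbf{I}\Sigma_n$ by Corollary~\ref{CorMT1}; since $M\prec N$, also $N\models\mathbf{I}\Sigma_n$, so Theorem~\ref{Kaye3} gives $\sup_N(M)\prec_{\Sigma_n}N$, Theorem~\ref{Clote} gives $\sup_N(M)\models\mathbf{B}\Sigma_{n+1}$, and this transfers down to $M$ along $M\prec_{\Sigma_{n+2}}\sup_N(M)$ because $\mathbf{B}\Sigma_{n+1}$ is $\Pi_{n+3}$-axiomatized. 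Your claim that ``$N$ is only assumed to model $\PA^-$'' overlooks that $M\prec N$ forces $N$ to inherit $\mathbf{I}\Sigma_n$ from $M$.

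That said, your direct argument---re-running the $(3\Rightarrow 1)$ computation of Theorem~\ref{MT2} inside the single $N$ supplied by clause~(3)---is more elementary and in fact stronger: it never invokes Theorems~\ref{Clote} or~\ref{Kaye3}, and crucially it never uses $M\models\mathbf{I}\Delta_0$. Your proof therefore establishes Proposition~\ref{cof_prop2} for arbitrary $M\models\PA^-$ satisfying $\eex{n}$, which is exactly what Problem~\ref{cof_prop2}+1 in the paper asks. The key observation you make explicit, and which the paper's route obscures, is that a \emph{single} proper elementary extension $N$ with $M\prec_{\Sigma_{n+2}}\sup_N(M)$ already witnesses every instance of $\Coll(\Sigma_{n+1})$ uniformly, since the element $e\in N\setminus\sup_N(M)$ and the conditions $\eex{n}$ and $M\prec_{\Sigma_{n+2}}\sup_N(M)$ are reused across all formulas and parameters.
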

\begin{proof}
Let $M \models \mathbf{I}\Delta_0$ satisfy $\eex{n}$. 
By Corollary \ref{CorMT1}, we have $M \models \mathbf{I}\Sigma_{n}$. 

$(1 \Rightarrow 2)$: Obvious by letting $N' = N$. 

$(2 \Rightarrow 3)$: Suppose that $M$ satisfies the condition of Clause 2. 
By the compactness argument, we find some $N \models \PA^-$ such that $M \prec N$ and $\sup_N(M) \neq N$. 
By Clause 2, we also find some $N' \models \PA^-$ such that $N \prec N'$ and $M \prec_{\Sigma_{n+2}} \sup_{N'}(M)$. 
We have $M \prec N'$, $N' \neq \sup_{N'}(M)$ and $M \prec_{\Sigma_{n+2}} \sup_{N'}(M)$. 

$(3 \Rightarrow 1)$: Let $N \models \PA^-$ be such that $M \prec N$, $N \neq \sup_N(M)$ and $M \prec_{\Sigma_{n+2}} \sup_N(M)$. 
By Theorem \ref{Kaye3}, $N$ is a proper $\Sigma_{n}$-elementary extension of $\sup_N(M)$. 
Since $N \models \mathbf{I}\Sigma_n$, by Theorem \ref{Clote}, we have $\sup_{N}(M) \models \mathbf{B}\Sigma_{n+1}$. 
As in the argument in Remark \ref{RemMT2}, we get $M \models \mathbf{B}\Sigma_{n+1}$. 
By Theorem \ref{MT2}, $M$ satisfies the condition $\cof{n+2}$. 
\end{proof}

The second condition in Proposition \ref{cof_prop2} originates from Kaye \cite{Kaye91}.

\begin{prob}
    Can the assumption $M \models \mathbf{I}\Delta_0$ in Proposition \ref{cof_prop2} be weakened to $M \models \PA^-$?
\end{prob}

As a straightforward consequence of Theorems \ref{MT1} and \ref{MT2}, we obtain the following refinement of Theorem \ref{Kaye2}. 

\begin{cor}
For $M \models \PA^-$, the following are equivalent: 
\begin{enumerate}
    \item $M \models \bigcup_{n \in \omega} \Coll(\Sigma_n)$.
    \item For any $N \models \PA^-$, if $M \prec N$, then $M \prec \sup_N(M) \prec N$. 
    \item For any $N \models \PA^-$, if $M \prec N$, then $\sup_N(M) \prec N$. 
\end{enumerate}
\end{cor}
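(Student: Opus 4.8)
The plan is to translate all three clauses into the language of the properties $\eex{n}$ and $\cof{n}$ and then read off the equivalences from Theorem \ref{MT2} together with Proposition \ref{eexcof}. First I would record the trivial observation that, since $\Coll(\Sigma_{n+1})$ proves $\Coll(\Sigma_n)$, Clause 1 is equivalent to the statement that $M \models \Coll(\Sigma_{n+1})$ for every $n \geq 0$. Next, unwinding the definitions: Clause 3 says exactly that $M$ satisfies $\eex{n}$ for all $n \geq 0$, because for $N$ with $M \prec N$ the assertion $\sup_N(M) \prec N$ is nothing but $\sup_N(M) \prec_{\Sigma_n} N$ holding for every $n$; and Clause 2 is Clause 3 together with the extra requirement $M \prec \sup_N(M)$, i.e.\ that $M$ satisfies $\cof{m}$ for all $m$. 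Since every formula is $\Sigma_m$ for some $m \geq 1$, it is enough to control $\cof{m}$ for $m \geq 1$.

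With this dictionary in place I would prove the cycle $1 \Rightarrow 2 \Rightarrow 3 \Rightarrow 1$. For $1 \Rightarrow 2$, fix $N \models \PA^-$ with $M \prec N$. By Theorem \ref{MT2}, Clause 1 yields that $M$ satisfies $\eex{n}$ and $\cof{n+2}$ for every $n \geq 0$. The conditions $\eex{n}$ for all $n$ give $\sup_N(M) \prec_{\Sigma_n} N$ for every $n$, hence $\sup_N(M) \prec N$. The conditions $\cof{n+2}$ for all $n$ give $\cof{m}$ for every $m \geq 2$; adjoining $\cof{1}$, which follows from the trivially valid $\eex{0}$ via Proposition \ref{eexcof}, yields $\cof{m}$ for all $m \geq 1$ and hence $M \prec \sup_N(M)$. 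Thus $M \prec \sup_N(M) \prec N$, which is Clause 2. The implication $2 \Rightarrow 3$ is immediate, simply dropping the assertion $M \prec \sup_N(M)$.

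The only step requiring a genuine idea is $3 \Rightarrow 1$, where I must recover the collection-side information demanded by Theorem \ref{MT2} from the purely end-extension information supplied by Clause 3. Clause 3 gives that $M$ satisfies $\eex{n}$ for all $n$; in particular $\eex{n+1}$ holds for each $n \geq 0$, and Proposition \ref{eexcof} converts each of these into $\cof{n+2}$. Consequently $M$ satisfies both $\eex{n}$ and $\cof{n+2}$ for every $n \geq 0$, so Theorem \ref{MT2} yields $M \models \Coll(\Sigma_{n+1})$ for each $n$, i.e.\ Clause 1. I expect no real obstacle beyond careful bookkeeping with the hierarchy indices; the one substantive move is precisely this use of Proposition \ref{eexcof} to manufacture the hypotheses $\cof{n+2}$ that Theorem \ref{MT2} requires, turning the one-sided Clause 3 back into the full collection scheme.
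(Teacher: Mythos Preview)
Your proof is correct. The cycle $1 \Rightarrow 2 \Rightarrow 3 \Rightarrow 1$ works exactly as you describe, and your index bookkeeping is clean (in particular, your explicit handling of $\cof{1}$ via Proposition~\ref{eexcof} applied to $\eex{0}$ is a detail the paper leaves implicit).

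The only notable difference from the paper is in the step $(3 \Rightarrow 1)$. You stay entirely within the $\Coll$ world: from $\eex{n+1}$ you extract $\cof{n+2}$ via Proposition~\ref{eexcof}, pair it with $\eex{n}$, and invoke Theorem~\ref{MT2} to obtain $\Coll(\Sigma_{n+1})$. The paper instead routes through strong collection: from $\eex{n}$ for all $n$ it applies Theorem~\ref{MT1} to conclude $M \models \CS(\Sigma_n)$ for all $n$, and then Proposition~\ref{OProp1} yields $\Coll(\Sigma_n)$. Both arguments are short; the paper's version has the mild advantage of not needing to assemble two separate hypotheses for Theorem~\ref{MT2}, while yours has the advantage of using only the characterization theorem for $\Coll$ itself rather than importing the one for $\CS$.
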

\begin{proof}
$(1 \Rightarrow 2)$: Suppose $M \models \bigcup_{n \in \omega} \Coll(\Sigma_n)$. 
By Theorem \ref{MT2}, $M$ satisfies $\eex{n}$ and $\cof{n}$ for all $n \in \omega$. 
This means that $M$ satisfies Clause 2. 

$(2 \Rightarrow 3)$: Trivial. 

$(3 \Rightarrow 1)$: Suppose that $M$ satisfies Clause 3. 
Then, $M$ satisfies $\eex{n}$ for all $n \in \omega$. 
By Theorem \ref{MT1}, $M \models \CS(\Sigma_n)$ for all $n \in \omega$. 
Then, $M \models \bigcup_{n \in \omega} \Coll(\Sigma_n)$ by Proposition \ref{OProp1}. 
\end{proof}

We propose the following problem. 

\begin{prob}\label{preserveProb}
Let $n \geq 0$ and $M, K \models \PA^-$ be such that $M \dcf K$. 
\begin{enumerate}
    \item Does $M \models \Coll(\Sigma_{n+1})$ imply $K \models \Coll(\Sigma_{n+1})$?
    \item If $M$ satisfies $\cof{n+1}$, then does $K$ satisfy $\cof{n+1}$?
\end{enumerate}
\end{prob}

Belanger and Wong's Theorem \ref{BW} provides the affirmative answer to the first clause of Problem \ref{preserveProb} in the case of $M \models \mathbf{I}\Delta_0 + \exp$. 
By Theorem \ref{MT2}, we have that $M \models \Coll(\Sigma_1)$ if and only if $M$ satisfies $\cof{2}$. 
So, Theorem \ref{BW} also provides the affirmative answer to the second clause of Problem \ref{preserveProb} in the case of $M \models \mathbf{I}\Delta_0 + \exp$ and $n = 1$.

\section{Weak parameter-free collection schemes}\label{Sec:WColl-}

In this subsection, we prove a model theoretic characterization of the weak parameter-free collection scheme $\CWm(\Sigma_{n+1})$. 
As a consequence, we show that every $\Delta_0$-elementary cofinal extension of a model of one of the theories $\PA^- + \CWm(\Sigma_{n+1})$, $\mathbf{B}\Sigma_{n+1}^-$ and $\mathbf{I}\Sigma_{n+1}^-$ is also a model of the theory. 

\begin{defn}
Let $M, K \models \PA^-$ be such that $M \subseteq K$ and let $n \geq 0$.  
We write $M \equiv_{\Sigma_{n}} K$ iff $M$ and $K$ satisfy the same $\Sigma_n$ sentences. \end{defn}

We introduce the following weak variations of the conditions $\eex{n}$, $\cof{n}$ and $\COF{n}$.   

\begin{defn}\label{Def_equiv}
Let $M \models \PA^-$ and $n \geq 0$. 
\begin{itemize}
		\item We say that $M$ satisfies the condition $\eexm{n}$ iff for any $N \models \PA^-$, if $M \prec N$, then $\sup_N(M) \equiv_{\Sigma_{n}} N$. 

\item We say that $M$ satisfies the condition $\cofm{n}$ iff for any $N \models \PA^-$, if $M \prec N$, then $M \equiv_{\Sigma_{n}} \sup_N(M)$. 

\item We say that $M$ satisfies the condition $\COFm{n}$ iff for any $K \models \PA^-$, if $M \dcf K$, then $M \equiv_{\Sigma_{n}} K$. 
\end{itemize}
\end{defn}

For any models $M, N \models \PA^-$ with $M \prec N$, it is easy to see that $M \equiv_{\Sigma_n} \sup_N(M)$ if and only if $\sup_N(M) \equiv_{\Sigma_n} N$. 
So, we have the following proposition and we may focus only on the conditions $\cofm{n}$ and $\COFm{n}$:

\begin{prop}
For any $M \models \PA^-$ and $n \geq 0$, $M$ satisfies $\eexm{n}$ if and only if $M$ satisfies $\cofm{n}$. 
\end{prop}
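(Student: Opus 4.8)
The plan is to derive the proposition immediately from the observation recorded just before its statement, namely that for $M, N \models \PA^-$ with $M \prec N$ one has $M \equiv_{\Sigma_n} \sup_N(M)$ if and only if $\sup_N(M) \equiv_{\Sigma_n} N$. First I would verify this observation. Since $M \prec N$, the two structures are elementarily equivalent, so in particular $M \equiv_{\Sigma_n} N$; that is, $M$ and $N$ satisfy exactly the same $\Sigma_n$ sentences. Because `satisfying the same $\Sigma_n$ sentences' is an equivalence relation on $\LA$-structures, the relation $M \equiv_{\Sigma_n} N$ lets me pass freely between $M$ and $N$: if $M \equiv_{\Sigma_n} \sup_N(M)$, then transitivity through $M$ yields $\sup_N(M) \equiv_{\Sigma_n} N$, and if $\sup_N(M) \equiv_{\Sigma_n} N$, then transitivity through $N$ yields $M \equiv_{\Sigma_n} \sup_N(M)$. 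Note that I do not even need the sandwich $M \cf \sup_N(M) \ee N$ for this step; only the hypothesis $M \prec N$ is used.

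With the observation in hand, the proposition is a matter of comparing the two definitions. Both $\eexm{n}$ and $\cofm{n}$ are universally quantified over exactly the same class of models, namely all $N \models \PA^-$ with $M \prec N$. For each such fixed $N$, the observation shows that the consequent of $\eexm{n}$, that $\sup_N(M) \equiv_{\Sigma_n} N$, holds precisely when the consequent of $\cofm{n}$, that $M \equiv_{\Sigma_n} \sup_N(M)$, holds. Hence the two universally quantified statements are equivalent, and $M$ satisfies $\eexm{n}$ if and only if $M$ satisfies $\cofm{n}$.

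I do not anticipate a substantive obstacle here; the whole content lies in the remark that $M \prec N$ already forces $M \equiv_{\Sigma_n} N$, after which everything reduces to transitivity of $\equiv_{\Sigma_n}$. The only point requiring care is to confirm that the ranges of the quantifier over $N$ in the definitions of $\eexm{n}$ and $\cofm{n}$ genuinely coincide, so that the equivalence established pointwise in $N$ lifts to the quantified conditions without any mismatch.
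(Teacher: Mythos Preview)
Your proof is correct and follows exactly the approach indicated in the paper: the paper does not give a separate proof but simply notes beforehand that, since $M \prec N$ implies $M \equiv_{\Sigma_n} N$, the equivalence $M \equiv_{\Sigma_n} \sup_N(M) \iff \sup_N(M) \equiv_{\Sigma_n} N$ is immediate, which is precisely the transitivity argument you spell out.
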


Theorem \ref{MT2} states that the combination of the conditions $\eex{n}$ and $\cof{n+1}$ characterizes $\Coll(\Sigma_{n+1})$. 
If $\cof{n+1}$ is weakened to $\cofm{n+1}$, then we obtain the following characterization of $\CWm(\Sigma_{n+1})$. 

\begin{thm}\label{MT3}
For any $M \models \PA^-$ and $n \geq 0$, the following are equivalent: 
\begin{enumerate}
	\item $M \models \CWd(\Sigma_{n+1})$. 

 \item $M \models \CWm(\Sigma_{n+1})$. 

 \item $M$ satisfies the conditions $\eex{n}$ and $\COFm{n+2}$. 

	\item $M$ satisfies the conditions $\eex{n}$ and $\cofm{n+2}$. 
\end{enumerate}
\end{thm}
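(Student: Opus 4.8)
The plan is to establish the cycle $(1)\Rightarrow(2)\Rightarrow(3)\Rightarrow(4)\Rightarrow(1)$, of which two steps are essentially free. The implication $(1)\Rightarrow(2)$ is the general fact $\CWd(\Sigma_{n+1})\vdash\CWm(\Sigma_{n+1})$: a parameter-free instance is just the $\CWd$ instance with empty parameter block $\vec z$. The implication $(3)\Rightarrow(4)$ holds because, whenever $M\prec N$, the structure $\sup_N(M)$ is itself a $\Delta_0$-elementary cofinal extension of $M$ (we have $M\cf\sup_N(M)$, and $\sup_N(M)\ee N$ together with $M\prec N$ forces $M\prec_{\Delta_0}\sup_N(M)$), so applying $\COFm{n+2}$ to $K=\sup_N(M)$ yields $M\equiv_{\Sigma_{n+2}}\sup_N(M)$, which is exactly $\cofm{n+2}$.

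For $(2)\Rightarrow(3)$ I would verify $\eex{n}$ and $\COFm{n+2}$ separately. The condition $\eex{n}$ is trivial for $n=0$, and for $n\geq1$ it follows from Theorem \ref{MT1} once we note $\CWm(\Sigma_{n+1})\vdash\CS(\Sigma_n)$ (Proposition \ref{w_to_s}). For $\COFm{n+2}$, fix $M\dcf K$. The key preliminary is $M\prec_{\Sigma_{n+1}}K$: for $n=0$ this is Proposition \ref{GA0}, and for $n\geq1$ we have $\CWm(\Sigma_{n+1})\vdash\CS(\Sigma_n)\vdash\Coll(\Sigma_n)$, so Theorem \ref{Appl1}(1) gives $M\prec_{\Sigma_{n+1}}K$ and Theorem \ref{Appl1}(2) gives $K\models\CS(\Sigma_n)$, whence $M,K\models\Coll(\Sigma_n)$. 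Writing a $\Sigma_{n+2}$ sentence as $\exists\vec x\,\forall\vec y\,\psi$ with $\psi\in\Sigma_n$, the upward direction is easy: a witness $\vec c\in M$ for $M\models\exists\vec x\,\forall\vec y\,\psi$ satisfies the $\Pi_{n+1}$ statement $\forall\vec y<\vec b\,\psi(\vec c,\vec y)$ for every $\vec b\in M$, which transfers to $K$ via $M\prec_{\Sigma_{n+1}}K$; since $K$ is cofinal over $M$, this gives $K\models\forall\vec y\,\psi(\vec c,\vec y)$.

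The downward direction of $\COFm{n+2}$ is the crux. Suppose $K\models\forall\vec y\,\psi(\vec c,\vec y)$ for some $\vec c\in K$, pick $\vec b\in M$ with $\vec c<\vec b$, and assume toward a contradiction that $M\models\forall\vec x\,\exists\vec y\,\neg\psi$. Since $\psi$ is parameter-free and $\neg\psi\in\Sigma_{n+1}$, I would apply the parameter-free scheme $\CWm(\Sigma_{n+1})$ to obtain some $\vec v_0\in M$ with $M\models\forall\vec x<\vec b\,\exists\vec y<\vec v_0\,\neg\psi$. Over $\Coll(\Sigma_n)$ this formula is equivalent to a $\Pi_n$ formula in both $M$ and $K$ (bounded quantifier absorption, cf.~Kaye \cite[Proposition 7.1]{Kaye}; trivial for $n=0$), so $M\prec_{\Sigma_{n+1}}K$ transports it to $K$, giving $K\models\forall\vec x<\vec b\,\exists\vec y<\vec v_0\,\neg\psi$ and in particular $K\models\exists\vec y\,\neg\psi(\vec c,\vec y)$, contradicting $K\models\forall\vec y\,\psi(\vec c,\vec y)$. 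I expect this to be the main obstacle: it is the only place where the scheme $\CWm$ itself is invoked rather than its consequences $\CS(\Sigma_n)$, $\Coll(\Sigma_n)$, and it requires combining parameter-free collection with the $\Coll(\Sigma_n)$-based complexity reduction so that the resulting formula is low enough ($\Pi_n$) to cross $M\prec_{\Sigma_{n+1}}K$. The fact that $\CWm$ is used only on the parameter-free $\neg\psi$ is precisely what makes the weak condition $\COFm{n+2}$ (rather than $\COF{n+2}$) come out.

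For $(4)\Rightarrow(1)$ I would prove the contrapositive of a fixed $\CWd(\Sigma_{n+1})$ instance for $\varphi(\vec x,\vec y,\vec z)$. If it fails, then $M\models\forall\vec z\,\forall\vec x\,\exists\vec y\,\varphi$ while, for some $\vec a,\vec b\in M$, $M\models\forall\vec v\,\exists\vec x<\vec a\,\forall\vec y<\vec v\,\neg\varphi(\vec x,\vec y,\vec b)$. By compactness take $N\succ M$ with some $\vec e\in N\setminus\sup_N(M)$, and inside $N$ instantiate $\vec v=\vec e$ to find $\vec c<\vec a$ (so $\vec c\in\sup_N(M)$) with $N\models\neg\varphi(\vec c,\vec d,\vec b)$ for all $\vec d\in\sup_N(M)$. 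By $\eex{n}$ we have $\sup_N(M)\prec_{\Sigma_n}N$, which carries the $\Pi_{n+1}$ formula $\neg\varphi$ downward, so $\sup_N(M)\models\forall\vec y\,\neg\varphi(\vec c,\vec y,\vec b)$ and hence $\sup_N(M)\models\exists\vec z\,\exists\vec x\,\forall\vec y\,\neg\varphi$. Since this last formula is a \emph{parameter-free} $\Sigma_{n+2}$ sentence, the weak condition $\cofm{n+2}$ already suffices to pull it back to $M$, contradicting $M\models\forall\vec z\,\forall\vec x\,\exists\vec y\,\varphi$ and completing the cycle.
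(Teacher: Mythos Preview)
Your argument is correct and follows essentially the same cycle $(1)\Rightarrow(2)\Rightarrow(3)\Rightarrow(4)\Rightarrow(1)$ as the paper, with the same ingredients (Proposition~\ref{w_to_s} and Theorem~\ref{MT1} for $\eex{n}$, Theorem~\ref{Appl1} for $M\prec_{\Sigma_{n+1}}K$ and $K\models\Coll(\Sigma_n)$, and the compactness argument from Theorem~\ref{MT2} for $(4)\Rightarrow(1)$). The only cosmetic difference is in the downward half of $\COFm{n+2}$: the paper transfers $\exists\vec x<\vec a\,\forall\vec y<\vec b\,\varphi$ from $K$ to $M$ and then applies the contrapositive of $\CWm$, whereas you assume $M\models\forall\vec x\,\exists\vec y\,\neg\psi$, apply $\CWm$ directly, and transfer $\forall\vec x<\vec b\,\exists\vec y<\vec v_0\,\neg\psi$ to $K$ for a contradiction---these are the same move in dual form.
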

\begin{proof}
$(1 \Rightarrow 2)$: Trivial. 

$(2 \Rightarrow 3)$: Suppose $M \models \CWm(\Sigma_{n+1})$. 
We show that $M$ satisfies $\eex{n}$. 
If $n = 0$, $M$ trivially satisfies $\eex{0}$. 
If $n \geq 1$, by Proposition \ref{w_to_s}, $M \models \CS(\Sigma_{n})$. 
By Theorem \ref{MT1}, $M$ satisfies $\eex{n}$. 

We prove that $M$ satisfies $\COFm{n+2}$. 
Let $K \models \PA^-$ be such that $M \dcf K$, and we show $M \equiv_{\Sigma_{n+2}} K$. 
By Corollary \ref{eexcof2}, we have that $M$ satisfies $\COF{n+1}$, and thus $M \prec_{\Sigma_{n+1}} K$ holds. 
Then, it suffices to show that $K \models \psi$ implies $M \models \psi$ for all $\Sigma_{n+2}$ sentences $\psi$.

Let $\varphi(\vec{x}, \vec{y})$ be any $\Sigma_{n}$ formula such that $K \models \exists \vec{x}\, \forall \vec{y}\, \varphi(\vec{x}, \vec{y})$. 
Since $M \cf K$, there exist $\vec{a} \in M$ such that for all $\vec{b} \in M$, $K \models \exists \vec{x} < \vec{a}\, \forall \vec{y} < \vec{b}\, \varphi(\vec{x}, \vec{y})$. 
If $n = 0$, this formula is $\Delta_0$.
If $n \geq 1$, since $M \models \CS(\Sigma_n)$, we have that $K \models \CS(\Sigma_n)$ by Theorem \ref{Appl1}. 
In particular, both $M$ and $K$ are models of $\Coll(\Sigma_n)$, and hence that formula above may be regarded as $\Sigma_n$ in $M$ and $K$. 
Thus, we have $M \models \exists \vec{x} < \vec{a}\, \forall \vec{y} < \vec{b}\, \varphi(\vec{x}, \vec{y})$ because $M \prec_{\Sigma_{n+1}} K$. 
Therefore, $\exists \vec{u}\, \forall \vec{v}\, \exists \vec{x} < \vec{u}\, \forall \vec{y} < \vec{v}\, \varphi(\vec{x}, \vec{y})$ is true in $M$. 
By applying $\CWm(\Sigma_{n+1})$, we obtain $M \models \exists \vec{x}\, \forall \vec{y}\, \varphi(\vec{x}, \vec{y})$. 

$(3 \Rightarrow 4)$: Trivial. 

$(4 \Rightarrow 1)$: Suppose that $M$ satisfies the conditions $\eex{n}$ and $\cofm{n+2}$. 
We prove that the contrapositive of each instance of $\CWd(\Sigma_{n+1})$ holds in $M$. 
For any $\Sigma_{n+1}$ formula $\varphi(\vec{x}, \vec{y}, \vec{z})$, we assume
\[
	M \models \exists \vec{z}\, \exists \vec{u}\, \forall \vec{v}\, \exists \vec{x} < \vec{u}\, \forall \vec{y} < \vec{v}\, \neg \varphi(\vec{x}, \vec{y}, \vec{z}). 
\]
Then, we find $\vec{a}, \vec{b} \in M$ such that $M \models \forall \vec{v}\, \exists \vec{x} < \vec{a}\, \forall \vec{y} < \vec{v}\, \neg \varphi(\vec{x}, \vec{y}, \vec{b})$. 
By the same argument as in the proof of Theorem \ref{MT2}, we obtain that $\sup_N(M) \models \exists \vec{x} < \vec{a}\, \forall \vec{y}\, \neg \varphi(\vec{x}, \vec{y}, \vec{b})$ by using the condition $\eex{n}$. 
So, we have $\sup_N(M) \models \exists \vec{z}\, \exists \vec{x}\, \forall \vec{y}\, \neg \varphi(\vec{x}, \vec{y}, \vec{z})$. 
Since $M \equiv_{\Sigma_{n+2}} \sup_N(M)$ by the condition $\cofm{n+2}$, we conclude $M \models \exists \vec{z}\, \exists \vec{x}\, \forall \vec{y}\, \neg \varphi(\vec{x}, \vec{y}, \vec{z})$.
We have proved that $M \models \CWd(\Sigma_{n+1})$. 

\end{proof}

It is known that each of $\PA^- + \CWm(\Sigma_{n+1})$ and the extensions $\mathbf{I}\Sigma_{n+1}^-$ and $\mathbf{B}\Sigma_{n+1}^-$ of $\PA^- + \CWm(\Sigma_{n+1})$ are axiomatized by some set of Boolean combinations of $\Sigma_{n+2}$ sentences (cf.~\cite[Propositions 3.2 and 3.3]{KPD}). 
Hence, we obtain the following corollary.

\begin{cor}\label{Cor_Appl2}
Let $n \geq 0$ and $M, K \models \PA^-$ be such that $M \dcf K$. 
\begin{enumerate}
    \item If $M \models \CWm(\Sigma_{n+1})$, then $K \models \CWm(\Sigma_{n+1})$.
    \item If $M \models \mathbf{B}\Sigma_{n+1}^-$, then $K \models \mathbf{B}\Sigma_{n+1}^-$.
    \item If $M \models \mathbf{I}\Sigma_{n+1}^-$, then $K \models \mathbf{I}\Sigma_{n+1}^-$.
\end{enumerate}
\end{cor}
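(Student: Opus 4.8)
The plan is to read the corollary off directly from Theorem \ref{MT3}, combined with the cited syntactic fact that each of the three theories is axiomatized by a set of Boolean combinations of $\Sigma_{n+2}$ sentences. The single mechanism behind all three clauses is this: a $\Delta_0$-elementary cofinal extension of a model of $\CWm(\Sigma_{n+1})$ agrees with the original model on all $\Sigma_{n+2}$ sentences, and such agreement is exactly enough to transfer any theory of the stated syntactic complexity.

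First I would observe that in each clause $M$ satisfies $\CWm(\Sigma_{n+1})$: this is the hypothesis in Clause 1, and in Clauses 2 and 3 it holds because $\mathbf{B}\Sigma_{n+1}^-$ and $\mathbf{I}\Sigma_{n+1}^-$ are extensions of $\PA^- + \CWm(\Sigma_{n+1})$. Hence Theorem \ref{MT3} (the implication $(2 \Rightarrow 3)$) applies and yields that $M$ satisfies the condition $\COFm{n+2}$. Since $M \dcf K$ by hypothesis, the very definition of $\COFm{n+2}$ gives $M \equiv_{\Sigma_{n+2}} K$, that is, $M$ and $K$ satisfy precisely the same $\Sigma_{n+2}$ sentences.

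Next I would note that agreement on $\Sigma_{n+2}$ sentences automatically yields agreement on $\Pi_{n+2}$ sentences (by taking negations), and hence on every Boolean combination of $\Sigma_{n+2}$ sentences. By the cited results \cite[Propositions 3.2 and 3.3]{KPD}, each of $\PA^- + \CWm(\Sigma_{n+1})$, $\mathbf{B}\Sigma_{n+1}^-$ and $\mathbf{I}\Sigma_{n+1}^-$ is axiomatized by such a set of sentences; since $M$ models the relevant theory and $K$ agrees with $M$ on every one of its axioms, $K$ models that theory as well. This establishes all three clauses uniformly.

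Since every genuine step is already in place, there is no real obstacle remaining: the content has been absorbed into Theorem \ref{MT3}—specifically into the appearance of $\COFm{n+2}$ rather than $\COF{n+2}$, which is exactly what makes $\Sigma_{n+2}$-\emph{equivalence} (rather than $\Sigma_{n+2}$-elementarity) suffice—and into the syntactic axiomatizability facts of \cite{KPD}. The only points requiring care are bookkeeping: confirming that $\mathbf{B}\Sigma_{n+1}^-$ and $\mathbf{I}\Sigma_{n+1}^-$ genuinely extend $\PA^- + \CWm(\Sigma_{n+1})$ so that Theorem \ref{MT3} is applicable, and recalling that $K \models \PA^-$ is part of the hypothesis, so that only the collection and induction axioms need to be transferred.
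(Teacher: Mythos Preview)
Your proposal is correct and follows exactly the paper's own argument: use Theorem \ref{MT3} to obtain $\COFm{n+2}$ and hence $M \equiv_{\Sigma_{n+2}} K$, then invoke the axiomatizability of each theory by Boolean combinations of $\Sigma_{n+2}$ sentences. The paper does not even spell out a separate proof, treating the corollary as immediate from precisely these two ingredients.
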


In the case of $M \models \mathbf{I}\Delta_0$, the following proposition is proved in the similar way as in the proof of Proposition \ref{cof_prop2} by using Theorem \ref{MT3} and the fact that $\mathbf{B}\Sigma_{n+1}^-$ is axiomatized by some set of Boolean combinations of $\Sigma_{n+2}$ sentences. 

\begin{prop}\label{cofm_prop}
Let $n \geq 0$. 
If $M \models \mathbf{I}\Delta_0$ satisfies $\eex{n}$, then the following are equivalent: 
\begin{enumerate}
    \item $M$ satisfies $\cofm{n+2}$. 
    \item For any $N \models \PA^-$, if $M \prec N$, then there exists an $N' \models \PA^-$ such that $N \prec N'$ and $M \equiv_{\Sigma_{n+2}} \sup_{N'}(M)$. 
    \item There exists an $N \models \PA^-$ such that $M \prec N$, $N \neq \sup_{N}(M)$ and $M \equiv_{\Sigma_{n+2}} \sup_{N}(M)$. 
\end{enumerate}
\end{prop}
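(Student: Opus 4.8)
The plan is to follow the template established by the proof of Proposition~\ref{cof_prop2}, substituting the characterization Theorem~\ref{MT3} for Theorem~\ref{MT2} and replacing the condition $\cof{n+2}$ throughout by its weaker variant $\cofm{n+2}$. Since $M \models \mathbf{I}\Delta_0$ satisfies $\eex{n}$, Corollary~\ref{CorMT1} gives $M \models \mathbf{I}\Sigma_n$ at the outset, which licenses the use of Theorems~\ref{Kaye3} and \ref{Clote} below.

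The implications $(1 \Rightarrow 2)$ and $(2 \Rightarrow 3)$ would be handled exactly as in Proposition~\ref{cof_prop2}. For $(1 \Rightarrow 2)$, I would simply take $N' = N$; the hypothesis $\cofm{n+2}$ gives $M \equiv_{\Sigma_{n+2}} \sup_N(M)$ directly. For $(2 \Rightarrow 3)$, I would first invoke the compactness argument (as used in the proof of Theorem~\ref{MT2}) to produce some $N \models \PA^-$ with $M \prec N$ and $\sup_N(M) \neq N$; applying Clause~2 to this $N$ yields an $N'$ with $N \prec N'$ and $M \equiv_{\Sigma_{n+2}} \sup_{N'}(M)$. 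Since $N \neq \sup_N(M)$ and $N \prec N'$, the model $N'$ is a proper extension of $\sup_{N'}(M)$, witnessing Clause~3 with $N'$ in place of $N$.

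The substantive direction is $(3 \Rightarrow 1)$, which mirrors the corresponding step in Proposition~\ref{cof_prop2}. Given $N \models \PA^-$ with $M \prec N$, $N \neq \sup_N(M)$ and $M \equiv_{\Sigma_{n+2}} \sup_N(M)$, I would apply Theorem~\ref{Kaye3} to conclude that $N$ is a proper $\Sigma_n$-elementary extension of $\sup_N(M)$; here I use that $M \prec N$ forces $N \models \mathbf{I}\Sigma_n$ as well (being $\Sigma_n$-elementarily equivalent to $M$ via $\sup_N(M)$, or directly since $\mathbf{I}\Sigma_n$ is $\Pi_{n+1}$-axiomatized and $M \prec N$). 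Since $N \models \mathbf{I}\Sigma_n$ and $\sup_N(M)$ has the proper $\Sigma_n$-elementary end-extension $N$, Theorem~\ref{Clote} gives $\sup_N(M) \models \mathbf{B}\Sigma_{n+1}$, hence a fortiori $\sup_N(M) \models \CWm(\Sigma_{n+1})$.

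The one place where the argument must genuinely \emph{differ} from Proposition~\ref{cof_prop2}, and which I expect to be the main obstacle, is the transfer of $\CWm(\Sigma_{n+1})$ (equivalently $\mathbf{B}\Sigma_{n+1}^-$) from $\sup_N(M)$ back down to $M$. In Proposition~\ref{cof_prop2} this step used that $\mathbf{B}\Sigma_{n+1}$ is $\Pi_{n+3}$-axiomatized together with $M \prec_{\Sigma_{n+2}} \sup_N(M)$. Here I only have the weaker $M \equiv_{\Sigma_{n+2}} \sup_N(M)$, so I must instead use that $\mathbf{B}\Sigma_{n+1}^-$ is axiomatized by a set of Boolean combinations of $\Sigma_{n+2}$ sentences (the fact cited from \cite[Propositions 3.2 and 3.3]{KPD}, as noted in the preamble to the proposition). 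Since $M$ and $\sup_N(M)$ satisfy the same $\Sigma_{n+2}$ sentences, they satisfy the same Boolean combinations of $\Sigma_{n+2}$ sentences, so $\mathbf{B}\Sigma_{n+1}^-$ transfers from $\sup_N(M)$ to $M$. Thus $M \models \mathbf{B}\Sigma_{n+1}^- \vdash \CWm(\Sigma_{n+1})$. Finally, having shown $M \models \CWm(\Sigma_{n+1})$, Theorem~\ref{MT3} (the equivalence $2 \Leftrightarrow 4$) yields that $M$ satisfies $\cofm{n+2}$, completing the cycle.
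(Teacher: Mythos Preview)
Your proposal is correct and follows exactly the approach the paper indicates: the paper gives no detailed proof, stating only that one proceeds as in Proposition~\ref{cof_prop2}, using Theorem~\ref{MT3} in place of Theorem~\ref{MT2} and the fact that $\mathbf{B}\Sigma_{n+1}^-$ is axiomatized by Boolean combinations of $\Sigma_{n+2}$ sentences---precisely the substitutions you make. One minor slip: $\mathbf{I}\Sigma_n$ is $\Pi_{n+2}$-axiomatized, not $\Pi_{n+1}$, but since you are using full elementarity $M \prec N$ this is immaterial.
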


\section{Parameter-free collection schemes}\label{Sec:Coll-}

In this section, we prove the model theoretic characterization of the scheme $\CWm(\Sigma_{n+1})$. 
As in the previous sections, we introduce several notions. 

\begin{defn}
Let $M, K \models \PA^-$ be such that $M \subseteq K$ and $n \geq 0$. 
\begin{itemize}
    \item $M \prec_{\Sigma_{n+1}}^< K$ iff for any $\vec{a} \in M$ and any $\Pi_n$ formula $\varphi(\vec{x})$, we have $M \models \exists \vec{x} < \vec{a}\, \varphi(\vec{x})$ if and only if $K \models \exists \vec{x} < \vec{a}\, \varphi(\vec{x})$. 
\end{itemize}
\end{defn}

\begin{defn}
Let $M \models \PA^-$ and $n \geq 0$. 
\begin{itemize}
    \item We say that $M$ satisfies the condition $\eexf{n+1}$ iff for any $N \models \PA^-$, if $M \prec N$, then $\sup_N(M) \prec_{\Sigma_{n+1}}^< N$. 

    \item We say that $M$ satisfies the condition $\coff{n+1}$ iff for any $N \models \PA^-$, if $M \prec N$, then $M \prec_{\Sigma_{n+1}}^< \sup_N(M)$. 

    \item We say that $M$ satisfies the condition $\COFf{n+1}$ iff for any $K \models \PA^-$, if $M \dcf K$, then $M \prec_{\Sigma_{n+1}}^< K$. 

\end{itemize}
\end{defn}

It is easy to see that $\cof{n+1}$ (resp.~$\COF{n+1}$) implies $\coff{n+1}$ (resp.~$\COFf{n+1}$), and $\coff{n+1}$ (resp.~$\COFf{n+1}$) implies $\cofm{n+1}$ (resp.~$\COFm{n+1}$). 
By the following proposition, we may focus only on the conditions $\coff{n+1}$ and $\COFf{n+1}$. 

\begin{prop}
For any $M \models \PA^-$ and $n \geq 0$, if $M$ satisfies $\eex{n}$, then $M$ also satisfies $\eexf{n+1}$. 
\end{prop}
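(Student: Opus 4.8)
The plan is to unpack both conditions and reduce $\eexf{n+1}$ to $\eex{n}$ by a witness-relocation argument that exploits the end-extension structure of $\sup_N(M)$ inside $N$. First I would fix any $N \models \PA^-$ with $M \prec N$ and set $K := \sup_N(M)$, so that $M \cf K \ee N$. The hypothesis $\eex{n}$ gives $K \prec_{\Sigma_n} N$; since $\prec_{\Sigma_n}$ is a two-way biconditional on $\Sigma_n$ formulas, it also preserves $\Pi_n$ formulas in both directions between $K$ and $N$ (apply the biconditional to the $\Sigma_n$ formula logically equivalent to the negation). To establish $K \prec_{\Sigma_{n+1}}^< N$, I fix $\vec{a} \in K$ and a $\Pi_n$ formula $\varphi(\vec{x})$ and prove that $K \models \exists \vec{x} < \vec{a}\, \varphi(\vec{x})$ if and only if $N \models \exists \vec{x} < \vec{a}\, \varphi(\vec{x})$.

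The upward direction is routine: a witness $\vec{b} < \vec{a}$ in $K$ with $K \models \varphi(\vec{b})$ remains a witness in $N$, using that $K \subseteq N$ preserves the bound $\vec{b} < \vec{a}$ and that $\Pi_n$ formulas pass upward from $K$ to $N$.

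The downward direction is where the end-extension is essential, and I expect this to be the main (though short) point. Suppose $N \models \exists \vec{x} < \vec{a}\, \varphi(\vec{x})$ and pick $\vec{b} \in N$ with $\vec{b} < \vec{a}$ and $N \models \varphi(\vec{b})$. Because $\vec{a} \in K$ and $K \ee N$, the end-extension property forces each component of $\vec{b}$, lying below the corresponding component of $\vec{a}$, to belong to $K$; hence $\vec{b} \in K$. Now $\vec{b} \in K$, $\varphi$ is $\Pi_n$, and $\Pi_n$ formulas pass downward from $N$ to $K$, so $K \models \varphi(\vec{b})$ and therefore $K \models \exists \vec{x} < \vec{a}\, \varphi(\vec{x})$.

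Finally I would note that the argument is uniform in $n$: for $n = 0$ the relation $\prec_{\Sigma_0} = \prec_{\Delta_0}$ supplied by $\eex{0}$ is automatic for the end-extension $K \ee N$, and the same relocation-plus-absoluteness reasoning applies verbatim. The only genuinely substantive observation is that the unbounded witness produced by $N$ is trapped below a parameter from $K$, so the hypothesis $K \ee N$ pulls it back into $K$; everything else is bookkeeping about the $\Sigma_n$/$\Pi_n$ biconditional provided by $\eex{n}$.
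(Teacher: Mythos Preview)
Your proof is correct and follows essentially the same approach as the paper: fix $N \succ M$, set $K = \sup_N(M)$, and use the end-extension $K \ee N$ to pull any witness $\vec{b} < \vec{a}$ from $N$ back into $K$, then invoke the $\Sigma_n$-elementarity given by $\eex{n}$ to transfer the $\Pi_n$ formula $\varphi(\vec{b})$. The paper is terser about the upward direction (it simply says it ``directly follows from $\eex{n}$''), but your spelled-out version is exactly the intended argument.
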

\begin{proof}
Suppose that $M$ satisfies $\eex{n}$. 
Let $N \models \PA^-$ be any model such that $M \prec N$. 
Let $\varphi(\vec{x})$ be any $\Pi_n$ formula and $\vec{a} \in \sup_N(M)$. 
Suppose $N \models \exists \vec{x} < \vec{a}\, \varphi(\vec{x})$, then we find some $\vec{b} \in N$ such that $N \models \vec{b} < \vec{a} \land \varphi(\vec{b})$. 
Since $\sup_N(M) \ee N$, we have $\vec{b} \in \sup_N(M)$. 
So, we obtain $\sup_N(M) \models \varphi(\vec{b})$ because $\sup_N(M) \prec_{\Sigma_n} N$ by $\eex{n}$. 
We conclude $\sup_N(M) \models \exists \vec{x} < \vec{a}\, \varphi(\vec{x})$. 
The converse direction directly follows from $\eex{n}$. 
\end{proof}

We prove the following characterization theorem. 

\begin{thm}\label{MT4}
For any $M \models \PA^-$ and $n \geq 0$, the following are equivalent: 
\begin{enumerate}
    \item $M \models \Cd(\Sigma_{n+1})$. 

    \item $M \models \Cm(\Sigma_{n+1})$. 

    \item $M$ satisfies the conditions $\eex{n}$ and $\COFf{n+2}$. 

    \item $M$ satisfies the conditions $\eex{n}$ and $\coff{n+2}$. 
\end{enumerate}
\end{thm}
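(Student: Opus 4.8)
The plan is to prove the cycle $(1 \Rightarrow 2 \Rightarrow 3 \Rightarrow 4 \Rightarrow 1)$, following the templates of Theorems \ref{MT2} and \ref{MT3}. Two of the implications are essentially immediate. For $(1 \Rightarrow 2)$ one only distributes the quantifier block $\forall\vec{z}$, since $\Cd(\Sigma_{n+1}) \vdash \Cm(\Sigma_{n+1})$. For $(3 \Rightarrow 4)$ it suffices to observe that $\COFf{n+2}$ implies $\coff{n+2}$: whenever $M \prec N$ one has $M \dcf \sup_N(M)$ (cofinality is built into $\sup_N(M)$, and $M \prec_{\Delta_0} \sup_N(M)$ because $\sup_N(M) \ee N$ preserves $\Delta_0$ formulas while $M \prec N$), so applying $\COFf{n+2}$ to $K = \sup_N(M)$ gives $M \prec_{\Sigma_{n+2}}^{<} \sup_N(M)$; the condition $\eex{n}$ is common to Clauses 3 and 4 and carries over verbatim.

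$(2 \Rightarrow 3)$ runs parallel to the corresponding step of Theorem \ref{MT3}. I would first secure $\eex{n}$: it is trivial for $n = 0$, and for $n \geq 1$ a one-line logical argument gives $\Cm(\Sigma_{n+1}) \vdash \CWm(\Sigma_{n+1})$, whence $\CS(\Sigma_n)$ by Proposition \ref{w_to_s}, so $\eex{n}$ follows from Theorem \ref{MT1}. For $\COFf{n+2}$, fix $K$ with $M \dcf K$, a bound $\vec{a} \in M$, and a parameter-free $\Pi_{n+1}$ formula $\varphi(\vec{x})$. By Corollary \ref{eexcof2} we already have $M \prec_{\Sigma_{n+1}} K$, which handles $M \models \exists\vec{x}<\vec{a}\,\varphi(\vec{x}) \Rightarrow K \models \exists\vec{x}<\vec{a}\,\varphi(\vec{x})$ (a $\Pi_{n+1}$ witness in $M$ transfers). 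The converse I argue contrapositively: assuming $M \models \forall\vec{x}<\vec{a}\,\neg\varphi(\vec{x})$ and writing $\neg\varphi = \exists\vec{y}\,\psi$ with $\psi$ parameter-free $\Pi_n$, I apply $\Cm(\Sigma_{n+1})$ to produce $\vec{v}_0 \in M$ with $M \models \forall\vec{x}<\vec{a}\,\exists\vec{y}<\vec{v}_0\,\psi$ and transfer this formula to $K$. The transfer is legitimate because the formula is $\Delta_0$ when $n = 0$ and, when $n \geq 1$, is equivalent to a $\Pi_n$ formula over $\Coll(\Sigma_n)$ (Kaye \cite[Proposition 7.1]{Kaye}); here one uses that $K \models \CS(\Sigma_n)$, hence $K \models \Coll(\Sigma_n)$ by Proposition \ref{OProp1}, via the second clause of Theorem \ref{Appl1}. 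This yields $K \models \forall\vec{x}<\vec{a}\,\neg\varphi(\vec{x})$, as needed.

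$(4 \Rightarrow 1)$ is the substantive step. I would verify the contrapositive of a generic instance of $\Cd(\Sigma_{n+1})$ with bound $\vec{a}$: assuming $M \models \forall\vec{v}\,\exists\vec{x}<\vec{a}\,\forall\vec{y}<\vec{v}\,\neg\varphi(\vec{x},\vec{y},\vec{b})$ for a fixed $\vec{b} \in M$, the goal is $M \models \exists\vec{z}\,\exists\vec{x}<\vec{a}\,\forall\vec{y}\,\neg\varphi(\vec{x},\vec{y},\vec{z})$. By compactness I obtain $N \models \PA^-$ with $M \prec N$ and $\sup_N(M) \neq N$; then, exactly as in the proof of Theorem \ref{MT2} (fixing $\vec{e} \in N \setminus \sup_N(M)$ and invoking $\eex{n}$), one gets $\sup_N(M) \models \exists\vec{x}<\vec{a}\,\forall\vec{y}\,\neg\varphi(\vec{x},\vec{y},\vec{b})$. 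The new point is to transport this to $M$ using only $\coff{n+2}$. Since that condition concerns bounded existentials over \emph{parameter-free} $\Pi_{n+1}$ matrices, I would absorb the parameter $\vec{b}$ into the quantifier block: choosing $\vec{a}_1 \in M$ with $\vec{b} < \vec{a}_1$, the witness $\vec{z} = \vec{b}$ shows $\sup_N(M) \models \exists\vec{x}<\vec{a}\,\exists\vec{z}<\vec{a}_1\,\forall\vec{y}\,\neg\varphi(\vec{x},\vec{y},\vec{z})$, whose matrix $\forall\vec{y}\,\neg\varphi$ is parameter-free $\Pi_{n+1}$ and whose two bounded existentials merge into one. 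Applying $M \prec_{\Sigma_{n+2}}^{<} \sup_N(M)$ pulls this down to $M$, and dropping the bound on $\vec{z}$ gives the conclusion.

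The main obstacle is exactly the gap between $\coff{n+2}$ and the stronger $\cof{n+2}$ used for $\Coll(\Sigma_{n+1})$ in Theorem \ref{MT2}: the natural statement $\sup_N(M) \models \exists\vec{x}<\vec{a}\,\forall\vec{y}\,\neg\varphi(\vec{x},\vec{y},\vec{b})$ carries the parameter $\vec{b}$, and prefixing an \emph{unbounded} $\exists\vec{z}$ before transferring would demand $\cof{n+2}$. It is the distributed shape of $\Cd$ — the same witness $\vec{z} = \vec{b}$ occurring on both sides of the implication — that lets me bound $\vec{b}$ by an element of $M$ and reabsorb it as a bounded existential, keeping the matrix parameter-free. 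Getting this bookkeeping right, together with the routine split between the $n = 0$ case (needing only $\Delta_0$-elementarity) and the $n \geq 1$ case (needing the propagation of $\Coll(\Sigma_n)$ to cofinal extensions from Theorem \ref{Appl1}), is the only delicate ingredient; everything else is a direct adaptation of the earlier characterization proofs.
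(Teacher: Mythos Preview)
Your proposal is correct and follows essentially the same route as the paper: the same cycle $(1 \Rightarrow 2 \Rightarrow 3 \Rightarrow 4 \Rightarrow 1)$, the same use of $\eex{n}$ via Proposition~\ref{w_to_s} and Theorem~\ref{MT1}, the same compactness-plus-$\eex{n}$ argument borrowed from Theorem~\ref{MT2}, and the same key move of absorbing the parameter $\vec{b}$ into a bounded existential (your $\vec{a}_1$ is the paper's $\vec{d}$) before invoking $\coff{n+2}$. The only cosmetic difference is that in $(2 \Rightarrow 3)$ you prove the direction $K \Rightarrow M$ via its contrapositive (applying $\Cm$ directly in $M$ and pushing the resulting bounded formula up to $K$), whereas the paper assumes $K \models \exists\vec{x}<\vec{a}\,\forall\vec{y}\,\varphi$, pulls it back to $M$ by cofinality, and applies the contrapositive form of $\Cm$; these are the same manipulation read in opposite directions.
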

\begin{proof}
$(1 \Rightarrow 2)$: Trivial. 

$(2 \Rightarrow 3)$: Suppose $M \models \Cm(\Sigma_{n+1})$. 
We have that $M$ satisfies $\eex{n}$ as in the proof of Theorem \ref{MT3}. 
Let $K \models \PA^-$ be such that $M \dcf K$. 
We show that $M \prec_{\Sigma_{n+2}}^< K$. 
By Corollary \ref{eexcof2}, we have $M \prec_{\Sigma_{n+1}} K$. 

Let $\vec{a} \in M$ and $\varphi(\vec{x}, \vec{y})$ be any $\Sigma_{n}$ formula such that $K \models \exists \vec{x} < \vec{a}\, \forall \vec{y}\, \varphi(\vec{x}, \vec{y})$. 
By the same argument as in the proof of Theorem \ref{MT3}, we have that $M$ satisfies $\forall \vec{v}\, \exists \vec{x} < \vec{a}\, \forall \vec{y} < \vec{v}\, \varphi(\vec{x}, \vec{y})$.
By applying $\Cm(\Sigma_{n+1})$, we conclude that $M \models \exists \vec{x} < \vec{a}\, \forall \vec{y}\, \varphi(\vec{x}, \vec{y})$. 

$(3 \Rightarrow 4)$: Trivial. 

$(4 \Rightarrow 1$): Suppose that $M$ satisfies the conditions $\eex{n}$ and $\coff{n+2}$. 
We prove that the contrapositive of each instance of $\Cd(\Sigma_{n+1})$ holds in $M$. 
For any $\vec{a} \in M$ and any $\Sigma_{n+1}$ formula $\varphi(\vec{x}, \vec{y}, \vec{z})$, we assume
\[
	M \models \exists \vec{z}\, \forall \vec{v}\, \exists \vec{x} < \vec{a}\, \forall \vec{y} < \vec{v}\, \neg \varphi(\vec{x}, \vec{y}, \vec{z}). 
\]
So, for some $\vec{b} \in M$, we have
\[
	M \models \forall \vec{v}\, \exists \vec{x} < \vec{a}\, \forall \vec{y} < \vec{v}\, \neg \varphi(\vec{x}, \vec{y}, \vec{b}). 
\]
By the same argument as in the proof of Theorem \ref{MT2}, we obtain that $\sup_N(M) \models \exists \vec{x} < \vec{a}\, \forall \vec{y}\, \neg \varphi(\vec{x}, \vec{y}, \vec{b})$ by using the condition $\eex{n}$. 
Then, for some $\vec{d} \in M$, we have $\sup_N(M) \models \exists \vec{z} < \vec{d}\, \exists \vec{x} < \vec{a}\, \forall \vec{y}\, \neg \varphi(\vec{x}, \vec{y}, \vec{z})$. 
Since $M \prec_{\Sigma_{n+2}}^< \sup_N(M)$ by the condition $\coff{n+2}$, we get $M \models \exists \vec{z} < \vec{d}\, \exists \vec{x} < \vec{a}\, \forall \vec{y}\, \neg \varphi(\vec{x}, \vec{y}, \vec{z})$. 
Thus, $M \models \exists \vec{z}\, \exists \vec{x} < \vec{a}\, \forall \vec{y}\, \neg \varphi(\vec{x}, \vec{y}, \vec{z})$. 
\end{proof}

We propose the following problems. 

\begin{prob}\label{preserveProb2}
Let $n \geq 0$ and $M, K \models \PA^-$ be such that $M \dcf K$. 
\begin{enumerate}
    \item Does $M \models \Cm(\Sigma_{n+1})$ imply $K \models \Cm(\Sigma_{n+1})$?
    \item If $M$ satisfies $\coff{n+1}$, then does $K$ satisfy $\coff{n+1}$?
\end{enumerate}
\end{prob}

\begin{prob}
For $n \geq 0$, does $\PA^- + \CWm(\Sigma_{n+1})$ prove $\Cm(\Sigma_{n+1})$?
\end{prob}

Cord\'on-Franco et al.~\cite[Proposition 5.6]{CFL} showed that $\mathbf{B}\Sigma_{n+1}^- \nvdash \Cm(\Sigma_{n+1})$ if and only if $\mathbf{I}\Delta_0 + \Cm(\Sigma_{n+1})$ is not axiomatized by any set of Boolean combinations of $\Sigma_{n+2}$ sentences. 
This equivalence also follows from the proof of Proposition \ref{cofm_prop}. 
Relating to this problem, we get the following proposition. 

\begin{prop}
For any $n \geq 0$, the following are equivalent: 
\begin{enumerate}
    \item $\mathbf{B}\Sigma_{n+1}^- \nvdash \Cm(\Sigma_{n+1})$. 
    \item There exist $M, N \models \mathbf{I}\Sigma_n$ such that $M \prec N$, $N \neq \sup_N(M)$, $M \equiv_{\Sigma_{n+2}} \sup_N(M)$ and $M \not \prec_{\Sigma_{n+2}}^< \sup_N(M)$. 
\end{enumerate}
\end{prop}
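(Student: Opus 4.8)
The plan is to translate the syntactic non-provability statement (1) into the model-theoretic conditions $\eex{n}$, $\cofm{n+2}$ and $\coff{n+2}$ via Theorems \ref{MT3} and \ref{MT4}, and then to bridge the gap between the \emph{universal} condition $\cofm{n+2}$ and the \emph{single-witness} formulation in (2) using Proposition \ref{cofm_prop}. Throughout I would use that $\mathbf{B}\Sigma_{n+1}^- = \mathbf{I}\Delta_0 + \CWm(\Sigma_{n+1})$, that $\mathbf{B}\Sigma_{n+1}^- \vdash \mathbf{I}\Sigma_n$, and that for $M \models \mathbf{I}\Delta_0$ we have $M \models \mathbf{I}\Sigma_n$ iff $M$ satisfies $\eex{n}$ (Corollary \ref{CorMT1}). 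I also record the elementary observation that $\prec_{\Sigma_{n+2}}$ refines $\prec_{\Sigma_{n+2}}^<$, since a formula $\exists \vec{x} < \vec{a}\, \varphi(\vec{x})$ with $\varphi \in \Pi_{n+1}$ is $\Sigma_{n+2}$.

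For $(1 \Rightarrow 2)$ I would fix a model $M \models \mathbf{B}\Sigma_{n+1}^-$ with $M \not\models \Cm(\Sigma_{n+1})$. Then $M \models \mathbf{I}\Sigma_n$, so $M \models \mathbf{I}\Delta_0$ and $M$ satisfies $\eex{n}$; moreover $M \models \CWm(\Sigma_{n+1})$, so Theorem \ref{MT3} gives that $M$ satisfies $\cofm{n+2}$. By Theorem \ref{MT4}, since $\eex{n}$ holds but $M \not\models \Cm(\Sigma_{n+1})$, the condition $\coff{n+2}$ must fail, so there is an $N \models \PA^-$ with $M \prec N$ and $M \not\prec_{\Sigma_{n+2}}^< \sup_N(M)$. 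Now $M \prec N$ and $M \models \mathbf{I}\Sigma_n$ force $N \models \mathbf{I}\Sigma_n$ (elementary extensions preserve the full first-order theory); $\cofm{n+2}$ gives $M \equiv_{\Sigma_{n+2}} \sup_N(M)$; and $N \neq \sup_N(M)$ is automatic, for if $N = \sup_N(M)$ then $M \cf N$, whence $M \prec N$ yields $M \prec_{\Sigma_{n+2}} N = \sup_N(M)$ and therefore $M \prec_{\Sigma_{n+2}}^< \sup_N(M)$, contradicting the choice of $N$. Thus $(M,N)$ witnesses (2).

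For $(2 \Rightarrow 1)$ I would take $M, N$ as in (2). From $M \models \mathbf{I}\Sigma_n$ we get $M \models \mathbf{I}\Delta_0$ and $M$ satisfies $\eex{n}$ (Corollary \ref{CorMT1}). The data $M \prec N$, $N \neq \sup_N(M)$, $M \equiv_{\Sigma_{n+2}} \sup_N(M)$ is exactly Clause 3 of Proposition \ref{cofm_prop}, so that proposition yields $\cofm{n+2}$; together with $\eex{n}$, Theorem \ref{MT3} gives $M \models \CWm(\Sigma_{n+1})$, i.e. $M \models \mathbf{B}\Sigma_{n+1}^-$. On the other hand, $M \not\prec_{\Sigma_{n+2}}^< \sup_N(M)$ shows $\coff{n+2}$ fails, so Theorem \ref{MT4} (again using $\eex{n}$) gives $M \not\models \Cm(\Sigma_{n+1})$. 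Hence $M$ is a model of $\mathbf{B}\Sigma_{n+1}^-$ refuting $\Cm(\Sigma_{n+1})$, so $\mathbf{B}\Sigma_{n+1}^- \nvdash \Cm(\Sigma_{n+1})$.

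The one genuinely non-formal step, which I expect to be the crux, is the passage in $(2 \Rightarrow 1)$ from the single witness $N$ to the universal statement $\cofm{n+2}$ required to certify $M \models \CWm(\Sigma_{n+1})$; this is precisely what Proposition \ref{cofm_prop} supplies, and getting its hypotheses to line up (that $M \models \mathbf{I}\Delta_0$ satisfies $\eex{n}$) is the point to verify carefully. Everything else is bookkeeping that combines Theorems \ref{MT3} and \ref{MT4} with the preservation of $\mathbf{I}\Sigma_n$ under elementary extensions and the implication $\prec_{\Sigma_{n+2}} \Rightarrow \prec_{\Sigma_{n+2}}^<$.
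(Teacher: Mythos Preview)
Your proof is correct and follows essentially the same route as the paper's: both directions hinge on Theorems \ref{MT3} and \ref{MT4} to translate between $\CWm(\Sigma_{n+1})$, $\Cm(\Sigma_{n+1})$ and the conditions $\eex{n}$, $\cofm{n+2}$, $\coff{n+2}$, with Proposition \ref{cofm_prop} supplying the passage from the single witness $N$ in (2) to the universal condition $\cofm{n+2}$. The only cosmetic difference is that the paper phrases $(1\Rightarrow 2)$ in terms of $\COFm{n+2}$ and $\COFf{n+2}$ rather than $\cofm{n+2}$ and $\coff{n+2}$, but these are equivalent under $\eex{n}$ by the same theorems.
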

\begin{proof}
    $(1 \Rightarrow 2)$: Suppose $\mathbf{B}\Sigma_{n+1}^- \nvdash \Cm(\Sigma_{n+1})$. 
    We obtain a model $M \models \mathbf{I}\Delta_0 + \CWm(\Sigma_{n+1})$ with $M \not \models \Cm(\Sigma_{n+1})$. 
    Then, $M$ is a model of $\mathbf{I}\Sigma_n$. 
    By Theorems \ref{MT3} and \ref{MT4}, $M$ satisfies $\eex{n}$ and $\COFm{n+2}$ but does not satisfy $\COFf{n+2}$. 
    Then, we obtain a model $N \models \mathbf{I}\Sigma_n$ such that $M \prec N$ and $M \not \prec_{\Sigma_{n+2}}^< \sup_N(M)$. 
    It follows from the condition $\COFm{n+2}$ that $M \equiv_{\Sigma_{n+2}} \sup_N(M)$. 
    Since $M \prec N$ and $M \not \prec_{\Sigma_{n+2}}^< \sup_N(M)$, we have $N \neq \sup_N(M)$. 

    $(2 \Rightarrow 1)$: Let $M, N \models \mathbf{I}\Sigma_n$ satisfy the conditions stated in Clause 2. 
    By Corollary \ref{CorMT1} and Proposition \ref{cofm_prop}, $M$ satisfies $\eex{n}$ and $\cofm{n+2}$. 
    Hence, by Theorem \ref{MT3}, $M \models \CWm(\Sigma_{n+1})$. 
    Since $M$ does not satisfy $\coff{n+2}$, by Theorem \ref{MT4}, we obtain $M \not \models \Cm(\Sigma_{n+1})$. 
    Therefore, we conclude $\mathbf{B}\Sigma_{n+1}^- \nvdash \Cm(\Sigma_{n+1})$. 
\end{proof}

We close this section with the following analogue of Propositions \ref{cof_prop2} and \ref{cofm_prop}. 

\begin{prop}\label{coff_prop}
Let $n \geq 0$. 
If $M \models \mathbf{I}\Delta_0$ satisfies $\eex{n}$, then the following are equivalent: 
\begin{enumerate}
    \item $M$ satisfies $\coff{n+2}$. 
    \item For any $N \models \PA^-$, if $M \prec N$, then there exists an $N' \models \PA^-$ such that $N \prec N'$ and $M \prec_{\Sigma_{n+2}}^< \sup_{N'}(M)$. 
    \item There exists an $N \models \PA^-$ such that $M \prec N$, $N \neq \sup_{N}(M)$ and $M \prec_{\Sigma_{n+2}}^< \sup_{N}(M)$. 
\end{enumerate}
\end{prop}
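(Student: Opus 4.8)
The plan is to run the same three-step scheme as in the proofs of Propositions \ref{cof_prop2} and \ref{cofm_prop}, replacing the characterization of $\Coll(\Sigma_{n+1})$ (resp.\ $\CWm(\Sigma_{n+1})$) by the characterization of $\Cm(\Sigma_{n+1})$ supplied by Theorem \ref{MT4}. At the outset I note that $M \models \mathbf{I}\Sigma_n$ by Corollary \ref{CorMT1}, so every elementary extension of $M$ is again a model of $\mathbf{I}\Sigma_n$. The implication $(1 \Rightarrow 2)$ is immediate by taking $N' = N$, and $(2 \Rightarrow 3)$ follows by first producing, via a routine compactness argument, some $N \models \PA^-$ with $M \prec N$ and $\sup_N(M) \neq N$, and then applying Clause 2 to obtain the required $N'$ (which then also satisfies $N' \neq \sup_{N'}(M)$). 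These two steps are verbatim analogues of the corresponding steps in Proposition \ref{cof_prop2}.

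The real work is in $(3 \Rightarrow 1)$. Fix $N$ as in Clause 3. Since $M$ satisfies $\eex{n}$ we have $\sup_N(M) \prec_{\Sigma_n} N$, and since $N \neq \sup_N(M)$ and $\sup_N(M) \ee N$, the model $N$ is a proper $\Sigma_n$-elementary end-extension of $\sup_N(M)$ with $N \models \mathbf{I}\Sigma_n$. Theorem \ref{Clote} therefore yields $\sup_N(M) \models \mathbf{B}\Sigma_{n+1}$, and in particular $\sup_N(M) \models \Cm(\Sigma_{n+1})$. In the two analogues, one now transfers the whole theory down to $M$ using a $\Pi_{n+3}$-axiomatization of $\mathbf{B}\Sigma_{n+1}$ (Proposition \ref{cof_prop2}) or a Boolean-$\Sigma_{n+2}$-axiomatization of $\mathbf{B}\Sigma_{n+1}^-$ (Proposition \ref{cofm_prop}). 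This is exactly where the present situation is harder: no such syntactically simple axiomatization for $\Cm(\Sigma_{n+1})$ is available --- indeed, its existence is equivalent to an open problem discussed just above --- so a theory-level transfer is not at hand, and I will instead transfer the instances of $\Cm(\Sigma_{n+1})$ one at a time.

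Concretely, I would verify the contrapositive of each instance of $\Cm(\Sigma_{n+1})$ in $M$ as in the proof of $(4 \Rightarrow 1)$ of Theorem \ref{MT4}, but reading off the single model $N$ of Clause 3 rather than an arbitrary elementary extension. Suppose $\varphi(\vec{x}, \vec{y})$ is $\Sigma_{n+1}$, $\vec{a} \in M$, and $M \models \forall \vec{v}\, \exists \vec{x} < \vec{a}\, \forall \vec{y} < \vec{v}\, \neg \varphi(\vec{x}, \vec{y})$. Using that $M \prec_{\Sigma_{n+1}} \sup_N(M)$ (which holds by Corollary \ref{eexcof2}, since $M \dcf \sup_N(M)$) and that $\Sigma_{n+1}$-elementarity is equally $\Pi_{n+1}$-elementarity, I push this hypothesis up to $\sup_N(M)$: for each bound $\vec{c} \in M$ I fix a witness $\vec{x}_0 < \vec{a}$ in $M$ with $M \models \forall \vec{y} < \vec{c}\, \neg \varphi(\vec{x}_0, \vec{y})$, observe that this $\Pi_{n+1}$ fact persists to $\sup_N(M)$, and then invoke $M \cf \sup_N(M)$ to cover every bound of $\sup_N(M)$ by monotonicity; hence $\sup_N(M) \models \forall \vec{v}\, \exists \vec{x} < \vec{a}\, \forall \vec{y} < \vec{v}\, \neg \varphi(\vec{x}, \vec{y})$. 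Applying $\Cm(\Sigma_{n+1})$ inside $\sup_N(M)$ gives $\sup_N(M) \models \exists \vec{x} < \vec{a}\, \forall \vec{y}\, \neg \varphi(\vec{x}, \vec{y})$. Since $\forall \vec{y}\, \neg \varphi$ is $\Pi_{n+1}$ and parameter-free, this last sentence is of exactly the bounded form governed by $M \prec_{\Sigma_{n+2}}^< \sup_N(M)$, so it descends to $M$, giving $M \models \exists \vec{x} < \vec{a}\, \forall \vec{y}\, \neg \varphi(\vec{x}, \vec{y})$. Thus $M \models \Cm(\Sigma_{n+1})$, and Theorem \ref{MT4} then delivers $\coff{n+2}$.

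The main obstacle is precisely this downward transfer. The delicate point is to keep every formula to which $\prec_{\Sigma_{n+2}}^<$ is applied genuinely parameter-free --- the outer bound $\vec{a}$ is permitted but the auxiliary bound $\vec{c}$ on $\vec{y}$ is not --- which is why the hypothesis must be moved upward by fixing witnesses $\vec{x}_0$ in $M$ and appealing only to $\Pi_{n+1}$-absoluteness together with cofinality, rather than by attempting to lift a bounded-$\Sigma_{n+2}$ formula carrying the parameter $\vec{c}$ directly. Once $M \models \Cm(\Sigma_{n+1})$ is secured, the equivalence closes through Theorem \ref{MT4} just as its analogues close through Theorems \ref{MT2} and \ref{MT3}.
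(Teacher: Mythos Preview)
Your argument is correct and follows the same overall strategy as the paper: establish $\sup_N(M)\models\mathbf{B}\Sigma_{n+1}$ via Theorem~\ref{Clote}, verify $M\models\Cm(\Sigma_{n+1})$ instance by instance by shuttling through $\sup_N(M)$, and then invoke Theorem~\ref{MT4}. The one difference is that you and the paper run the transfer in dual directions. The paper works with the direct form: starting from $M\models\forall\vec{x}<\vec{a}\,\exists\vec{y}\,\varphi$ with $\varphi\in\Pi_n$, it pushes this \emph{up} via $\prec_{\Sigma_{n+2}}^<$, applies collection in $\sup_N(M)$ to get $\exists\vec{v}\,\forall\vec{x}<\vec{a}\,\exists\vec{y}<\vec{v}\,\varphi$, and brings this \emph{down} via $M\prec_{\Sigma_{n+1}}\sup_N(M)$ after arguing (using $\Coll(\Sigma_n)$ in both models when $n\geq 1$) that the formula is $\Sigma_{n+1}$. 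You instead work with the contrapositive, pushing the failure of the conclusion up via $\prec_{\Sigma_{n+1}}$ plus cofinality and bringing the resulting $\exists\vec{x}<\vec{a}\,\forall\vec{y}\,\neg\varphi$ down via $\prec_{\Sigma_{n+2}}^<$. Your route has the small bonus that no $\Coll(\Sigma_n)$-based complexity reduction is needed for the downward step; the paper's route has a cleaner upward step. Both are fine.
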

\begin{proof}
Let $M \models \mathbf{I}\Delta_0$ satisfy $\eex{n}$. 
By Corollary \ref{CorMT1}, we have $M \models \mathbf{I}\Sigma_{n}$. 

$(1 \Rightarrow 2)$ and $(2 \Rightarrow 3)$ are proved in the similar way as in the proof of Proposition \ref{cof_prop2}. 

$(3 \Rightarrow 1)$: Let $N \models \PA^-$ be such that $M \prec N$, $N \neq \sup_N(M)$ and $M \prec_{\Sigma_{n+2}} \sup_N(M)$. 
By Theorem \ref{MT4}, it suffices to show $M \models \Cm(\Sigma_{n+1})$. 
Let $\vec{a} \in M$ and $\varphi(\vec{x}, \vec{y})$ be any $\Pi_n$ formula such that $M \models \forall \vec{x} < \vec{a}\, \exists \vec{y}\, \varphi(\vec{x}, \vec{y})$. 
Since $M \prec_{\Sigma_{n+2}}^< \sup_N(M)$, we have $\sup_N(M) \models \forall \vec{x} < \vec{a}\, \exists \vec{y}\, \varphi(\vec{x}, \vec{y})$. 
As in the proof of Proposition \ref{cof_prop2}, we obtain $\sup_{N}(M) \models \mathbf{B}\Sigma_{n+1}$, and hence $\sup_N(M) \models \exists \vec{v}\, \forall \vec{x} < \vec{a}\, \exists \vec{y} < \vec{v}\, \varphi(\vec{x}, \vec{y})$. 
If $n = 0$, this formula is $\Sigma_1$. 
If $n \geq 1$, it can also be regarded as $\Sigma_{n+1}$ because both $M$ and $\sup_N(M)$ are models of $\Coll(\Sigma_n)$. 
By Corollary \ref{eexcof2}, we have $M \prec_{\Sigma_{n+1}} \sup_N(M)$, and hence $M \models \exists \vec{v}\, \forall \vec{x} < \vec{a}\, \exists \vec{y} < \vec{v}\, \varphi(\vec{x}, \vec{y})$. 
We are done. 
\end{proof}

\section{$\cof{n+1}$ versus $\COF{n+1}$}\label{Sec:versus}

Corollary \ref{cofCOF} states that if $M \models \PA^-$ satisfies $\eex{n}$, then $\cof{n+2}$ and $\COF{n+2}$ are equivalent for $M$. 
So, $\cof{2}$ and $\COF{2}$ are equivalent. 

Then, we propose the following problem. 

\begin{prob}\label{cofCOFProb2}
    For $n \geq 0$ and $M \models \PA^-$, are $\cof{n+3}$ and $\COF{n+3}$ equivalent?
\end{prob}

In the case of $M \models \mathbf{I}\Delta_0 + \exp$, an improvement of Corollary \ref{cofCOF} follows from Belanger and Wong's theorem (Theorem \ref{BW}). 
In the proof of our improvement, we use the following lemma. 

\begin{lem}\label{ip}
For any $M, K \models \PA^-$, $M \prec_{\Sigma_1} K$ if and only if there exists an $N \models \PA^-$ such that $M \prec N$ and $K \prec_{\Delta_0} N$. 
\end{lem}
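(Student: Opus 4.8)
The plan is to prove the two directions separately. The backward (``if'') direction is a routine transfer of $\Sigma_1$ facts through the witnessing model, while the forward (``only if'') direction is a compactness construction carrying the real content. Throughout I treat $M \subseteq K$ as the standing hypothesis implicit in the notation $M \prec_{\Sigma_1} K$. For the backward direction, suppose $N \models \PA^-$ satisfies $M \prec N$ and $K \prec_{\Delta_0} N$, so $M \subseteq K \subseteq N$. Given $\vec{a} \in M$ and a $\Sigma_1$ formula $\varphi(\vec{x}) \equiv \exists \vec{y}\, \psi(\vec{x}, \vec{y})$ with $\psi \in \Delta_0$, I would argue: if $M \models \varphi(\vec{a})$, pick a witness $\vec{b} \in M$ with $M \models \psi(\vec{a}, \vec{b})$; since $M \prec N$ we get $N \models \psi(\vec{a}, \vec{b})$, and as $\vec{a}, \vec{b} \in M \subseteq K$ with $\psi$ a $\Delta_0$ formula, $K \prec_{\Delta_0} N$ yields $K \models \psi(\vec{a}, \vec{b})$, hence $K \models \varphi(\vec{a})$. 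Conversely, if $K \models \varphi(\vec{a})$, pick $\vec{b} \in K$ with $K \models \psi(\vec{a}, \vec{b})$; then $K \prec_{\Delta_0} N$ gives $N \models \psi(\vec{a}, \vec{b})$, so $N \models \varphi(\vec{a})$, and $M \prec N$ brings this $\Sigma_1$ fact back down to $M$. Thus $M \prec_{\Sigma_1} K$.

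For the forward direction, suppose $M \prec_{\Sigma_1} K$. Let $D_{\Delta_0}(K)$ denote the set of all $\Delta_0$ sentences with parameters from $K$ that are true in $K$; since the negation of a $\Delta_0$ formula is again $\Delta_0$, any model of $D_{\Delta_0}(K)$ agrees with $K$ on every $\Delta_0$ formula with parameters from $K$. I would consider the theory $T = \ElemDiag(M) \cup D_{\Delta_0}(K)$ in the language $\LA$ expanded by constants naming the elements of $K$, identifying the constant for $a \in M$ with the constant already present in $\ElemDiag(M)$. Any model $N$ of $T$, restricted to $\LA$, is a structure with $M \prec N$ (from $\ElemDiag(M)$) and $K \prec_{\Delta_0} N$ (from $D_{\Delta_0}(K)$, whose quantifier-free part already guarantees that $K$ embeds into $N$), and $N \models \PA^-$ because $\PA^- \subseteq \ElemDiag(M)$. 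It therefore remains to show that $T$ is consistent.

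By compactness it suffices to satisfy an arbitrary finite fragment, which I conjoin into a single formula $\theta(\vec{a})$ (an $\LA$-formula with $\vec{a} \in M$, true in $M$) together with a single $\Delta_0$ formula $\delta(\vec{a}', \vec{c}')$, where I split its parameters so that $\vec{a}' \in M$ and $\vec{c}'$ are the new constants naming elements of $K \setminus M$, with $K \models \delta(\vec{a}', \vec{c}')$. The crucial step is that $K \models \exists \vec{w}\, \delta(\vec{a}', \vec{w})$, and this is a $\Sigma_1$ sentence with parameters $\vec{a}' \in M$; hence $M \prec_{\Sigma_1} K$ transfers it to $M \models \exists \vec{w}\, \delta(\vec{a}', \vec{w})$. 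Choosing witnesses $\vec{e} \in M$ and interpreting $\vec{c}'$ as $\vec{e}$ turns $M$ itself into a model of $\theta(\vec{a}) \wedge \delta(\vec{a}', \vec{c}')$, so the fragment is satisfiable.

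I expect the main obstacle to be exactly this finite-satisfiability check, specifically the bookkeeping of parameters: one must carefully separate the parameters of each $\Delta_0$ fact of $K$ into those already lying in $M$, which are kept as genuine parameters, and those lying in $K \setminus M$, which must be existentially quantified so that the resulting statement is a bona fide $\Sigma_1$ sentence over $M$ to which $M \prec_{\Sigma_1} K$ applies. Once the witnesses are absorbed back into $M$, the structure $M$ discharges the entire finite fragment, and compactness delivers the desired $N$, completing the construction.
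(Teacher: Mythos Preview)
Your proof is correct and follows essentially the same approach as the paper: the paper's proof simply asserts that the backward direction is obvious and that the forward direction reduces to the consistency of $\ElemDiag(M) \cup \Delta_0\text{-}\ElemDiag(K)$, which follows from $M \prec_{\Sigma_1} K$. You have spelled out exactly this argument in full, including the finite-satisfiability check via compactness and the parameter bookkeeping that the paper leaves implicit.
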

\begin{proof}
The implication $(\Leftarrow)$ is obvious. 
To show the implication $(\Rightarrow)$, is suffices to prove the consistency of the theory $\ElemDiag(M) \cup \Delta_0\text{-}\ElemDiag(K)$, which follows from $M \prec_{\Sigma_1} K$. 
\end{proof}

\begin{prop}\label{cofCOF2}
For any $n \geq 0$ and $M \models \mathbf{I}\Delta_0 + \exp$ satisfying $\eex{n}$, $M$ satisfies $\cof{n+3}$ if and only if $M$ satisfies $\COF{n+3}$. 
\end{prop}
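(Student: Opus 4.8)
The backward direction $\COF{n+3} \Rightarrow \cof{n+3}$ is immediate, since every model satisfying $\COF{n+3}$ satisfies $\cof{n+3}$. So the plan is to prove $\cof{n+3} \Rightarrow \COF{n+3}$, and the standing hypothesis $M \models \mathbf{I}\Delta_0 + \exp$ together with Belanger and Wong's Theorem \ref{BW} will be the essential ingredient. First I would extract a collection scheme in $M$: since $\Sigma_{n+2} \subseteq \Sigma_{n+3}$, the condition $\cof{n+3}$ trivially implies $\cof{n+2}$, so combining this with the assumed $\eex{n}$, Theorem \ref{MT2} yields $M \models \Coll(\Sigma_{n+1})$, and hence $M \models \mathbf{B}\Sigma_{n+1} + \exp$.

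Now fix $K \models \PA^-$ with $M \dcf K$; the goal is $M \prec_{\Sigma_{n+3}} K$. By Theorem \ref{BW}.2 the property $\mathbf{B}\Sigma_{n+1} + \exp$ is preserved under $\Delta_0$-elementary cofinal extensions, so $K \models \mathbf{B}\Sigma_{n+1}$; in particular $K \models \Coll(\Sigma_{n+1})$. Applying Theorem \ref{Appl1}.1 to $M \dcf K$ then gives $M \prec_{\Sigma_{n+2}} K$. Next I would manufacture a common elementary overstructure: by Proposition \ref{GA0} we have $M \prec_{\Sigma_1} K$, so Lemma \ref{ip} furnishes $N \models \PA^-$ with $M \prec N$ and $K \prec_{\Delta_0} N$. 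Writing $L := \sup_N(M)$, the point is that $K$ sits exactly between $M$ and $L$: since $M \cf K$ we get $K \subseteq L$ and $K \cf L$, and since $L \ee N$ forces $L \prec_{\Delta_0} N$, the two relations $K \prec_{\Delta_0} N$ and $L \prec_{\Delta_0} N$ give $K \prec_{\Delta_0} L$, i.e.\ $K \dcf L$. By $\cof{n+3}$ applied to $N$ we have $M \prec_{\Sigma_{n+3}} L$, and since $K \models \Coll(\Sigma_{n+1})$ and $K \dcf L$, Theorem \ref{Appl1}.1 gives $K \prec_{\Sigma_{n+2}} L$.

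Finally I would run a sandwich argument to prove $M \prec_{\Sigma_{n+3}} K$. Take $\vec a \in M$ and a $\Sigma_{n+3}$ formula $\varphi(\vec x) = \exists \vec y\, \psi(\vec x, \vec y)$ with $\psi \in \Pi_{n+2}$. For the forward direction, a witness of $M \models \varphi(\vec a)$ already lies in $M \subseteq K$, and $M \prec_{\Sigma_{n+2}} K$ (hence $\Pi_{n+2}$-absoluteness between $M$ and $K$) transfers $\psi$, giving $K \models \varphi(\vec a)$. For the backward direction, from $K \models \varphi(\vec a)$ pick a witness $\vec c \in K$ with $K \models \psi(\vec a, \vec c)$; since $K \prec_{\Sigma_{n+2}} L$ yields $\Pi_{n+2}$-absoluteness between $K$ and $L$, we get $L \models \psi(\vec a, \vec c)$, so $L \models \varphi(\vec a)$, and then $M \prec_{\Sigma_{n+3}} L$ transfers this back to $M \models \varphi(\vec a)$. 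This establishes $M \prec_{\Sigma_{n+3}} K$, i.e.\ $\COF{n+3}$.

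The main obstacle is obtaining $K \models \Coll(\Sigma_{n+1})$, equivalently $K \prec_{\Sigma_{n+2}} L$: the hypothesis $\cof{n+3}$ only controls suprema of elementary extensions of $M$, so to handle an \emph{arbitrary} $\Delta_0$-elementary cofinal extension $K$ one needs that $K$ itself carries enough collection, and this is precisely what the $\exp$ assumption buys through Theorem \ref{BW}. The remaining care is purely absoluteness bookkeeping — that end extensions are $\Delta_0$-elementary and that $\prec_{\Sigma_{n+2}}$ gives $\Pi_{n+2}$-absoluteness — which routes the $\Sigma_{n+3}$ witness through $L$ in the only available way.
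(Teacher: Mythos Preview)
Your proof is correct and follows essentially the same approach as the paper: extract $M \models \mathbf{B}\Sigma_{n+1} + \exp$ from $\eex{n}$ and $\cof{n+3}$ via Theorem~\ref{MT2}, pass to $K \models \mathbf{B}\Sigma_{n+1}$ via Theorem~\ref{BW}, build the common overstructure $N$ via Lemma~\ref{ip}, and sandwich $K$ between $M$ and $L = \sup_N(M) = \sup_N(K)$. The only cosmetic differences are that the paper invokes $\COF{n+2}$ for $K$ through Theorem~\ref{MT2} rather than Theorem~\ref{Appl1}.1, and compresses your explicit sandwich argument into the phrase ``by combining them''; your step establishing $M \prec_{\Sigma_{n+2}} K$ directly is a slight convenience but not strictly needed, since the forward direction can also be routed through $L$.
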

\begin{proof}
Suppose $M \models \mathbf{I}\Delta_0 + \exp$ satisfies $\eex{n}$ and $\cof{n+3}$. 
By Theorem \ref{MT2}, $M \models \mathbf{B}\Sigma_{n+1} + \exp$.
Let $K \models \PA^-$ be such that $M \dcf K$. 
By Proposition \ref{GA0}, we have $M \prec_{\Sigma_1} K$, and so we get an $N \models \PA^-$ such that $M \prec N$ and $K \prec_{\Delta_0} N$ by Lemma \ref{ip}. 
Then, by Theorem \ref{BW}, we obtain $K \models \mathbf{B}\Sigma_{n+1}$, and hence $K$ satisfies $\COF{n+2}$ by Theorem \ref{MT2} again. 
Therefore, $K \prec_{\Sigma_{n+2}} \sup_N(K) = \sup_N(M)$. 
Also by $\cof{n+3}$ for $M$, we have $M \prec_{\Sigma_{n+3}} \sup_N(M)$. 
By combining them, we obtain $M \prec_{\Sigma_{n+3}} K$. 
We have shown that $M$ satisfies $\COF{n+3}$. 
\end{proof}

As a consequence, $\cof{3}$ and $\COF{3}$ are equivalent in the case of $M \models \mathbf{I}\Delta_0 + \exp$. 

Recently, the following interesting theorem is announced by Mengzhou Sun and Tin Lok Wong. 

\begin{thm}[Sun and Wong]\label{MW}
Let $n \geq 0$. 
\begin{enumerate}
    \item For any countable model $M \models \mathbf{B}\Sigma_{n+1} + \exp + \neg \mathbf{I}\Sigma_{n+1}$,  we have that $M$ does not satisfy $\cof{n+4}$. 
    \item There exists a countable model $M \models \mathbf{B}\Sigma_{n+1} + \exp + \neg \mathbf{I}\Sigma_{n+1}$ that satisfies $\COF{n+3}$. 
    \item There exists a uncountable model $M \models \mathbf{B}\Sigma_{n+1} + \exp + \neg \mathbf{I}\Sigma_{n+1}$ that satisfies $\COF{k}$ for all $k \geq 1$. 
\end{enumerate}
\end{thm}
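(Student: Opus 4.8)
The plan is to handle the three clauses separately while sharing a set-up drawn from the characterisations already established. Fix $M \models \mathbf{B}\Sigma_{n+1} + \exp + \neg\mathbf{I}\Sigma_{n+1}$. Since $M \models \Coll(\Sigma_{n+1})$, Theorem~\ref{MT2} gives that $M$ satisfies $\eex{n}$, $\cof{n+2}$ and $\COF{n+2}$, so $M \prec_{\Sigma_{n+2}} \sup_N(M)$ is automatic for every $N \succ M$ and the entire statement concerns only the band between $\Sigma_{n+3}$ and $\Sigma_{n+4}$. As $M \models \mathbf{I}\Delta_0 + \exp$ and satisfies $\eex{n}$, Proposition~\ref{cofCOF2} moreover identifies $\cof{n+3}$ with $\COF{n+3}$ for $M$. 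Two further inputs make the clauses cohere. First, every model satisfying $\COF{m}$ satisfies $\cof{m}$ (Section~\ref{Sec:pre}), so the uncountable model of Clause~3 automatically satisfies $\cof{k}$ for all $k$; this is exactly what makes the countability hypothesis of Clause~1 indispensable. Second, from $\neg\mathbf{I}\Sigma_{n+1}$ together with the bounded-quantifier collapse for $\Sigma_{n+1}$ over $\Coll(\Sigma_{n+1})$ (the analogue of Kaye's Proposition~7.1), $M$ carries a proper $\Sigma_{n+1}$-definable initial segment $I \subsetneq M$ that is closed under successor; this cut is the engine behind every disagreement above the $\Sigma_{n+2}$ level, and it is preserved by any $\Delta_0$-elementary cofinal extension because such extensions are $\Sigma_{n+2}$-elementary over $M$ (Theorem~\ref{MT2}).

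For Clause~1 I would refute $\cof{n+4}$ by constructing a single $N \succ M$ whose cofinal core $K = \sup_N(M)$ properly contains $M$ and fails $M \prec_{\Sigma_{n+4}} K$. The idea is to realise, by compactness and an omitting-types argument using the countability of $M$, a type over $M$ keyed to the cut $I$ that inserts fresh elements into $K$ below a fixed bound $\vec b \in M$, chosen so that a $\Pi_{n+3}$ property witnessing a $\Sigma_{n+4}$ statement true in $M$ (a witness produced by $\Coll(\Sigma_{n+1})$ below $\vec b$) is falsified by those fresh elements. The countability of $M$ is used both to enumerate and diagonalise against the candidate witnesses and to carry out the omitting step. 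I expect this to be the main obstacle: one must force a genuine $\Sigma_{n+4}$ discrepancy while the ambient collection simultaneously forces agreement through $\Sigma_{n+3}$ to be unavoidable, so pinning down the precise $\Pi_{n+3}$ matrix and verifying the omitting condition is the delicate heart of the argument.

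For Clause~2 I would build a countable witness as an $\omega$-length chain $M_0 \dcf M_1 \dcf \cdots$ of $\Delta_0$-elementary cofinal extensions with $M = \bigcup_i M_i$, beginning from any countable $M_0 \models \mathbf{B}\Sigma_{n+1} + \exp + \neg\mathbf{I}\Sigma_{n+1}$. Since each step is cofinal, $M_0 \cf M$, so Theorem~\ref{BW} applied to $M_0 \dcf M$ keeps $\mathbf{B}\Sigma_{n+1} + \exp$, while $M_0 \prec_{\Sigma_{n+2}} M$ keeps $\neg\mathbf{I}\Sigma_{n+1}$ by preserving the cut $I$. At stage $i$ one treats the $i$-th triple (formula, parameter, bound) and, whenever the associated bounded $\Sigma_{n+3}$-demand is realisable in some $\Delta_0$-elementary cofinal extension, passes to such an extension realising it; the existence of such an extension is by the paper's compactness method and its admissibility by Theorem~\ref{BW}. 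In the limit every bounded $\Sigma_{n+3}$-fact available in any cofinal extension is already present below $M$, yielding $\COF{n+3}$ and hence $\cof{n+3}$; Clause~1 then shows this same $M$ cannot reach $\cof{n+4}$. The delicate point is to perform each step conservatively enough that $\mathbf{B}\Sigma_{n+1}$ and the cut survive while the prescribed witness is added.

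For Clause~3 I would iterate a cofinal type-realisation transfinitely to length $\omega_1$, producing a continuous \emph{elementary} tower $\langle M_\alpha : \alpha < \omega_1 \rangle$ of countable models whose union $M$ has cofinality $\omega_1$; since the tower is elementary over $M_0$, the theory $\mathbf{B}\Sigma_{n+1} + \exp + \neg\mathbf{I}\Sigma_{n+1}$ is retained for free, so here $\neg\mathbf{I}\Sigma_{n+1}$ costs nothing. At successor stages one realises, below the relevant bounds, every bounded $\Sigma_k$-demand that is realisable in a cofinal extension of the current stage (equivalently, in the cofinal core $\sup_K(M_\alpha)$ of any candidate $K$) while also pushing the top upward. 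Given any $K$ with $M \dcf K$ and any $\Sigma_k$ fact witnessed in $K$, cofinality of $M$ in $K$ places the witnesses below some $\vec b \in M$, hence in $\sup_K(M_\alpha)$ for the countable stage $M_\alpha$ where the parameters appear; by dovetailing so that each such local demand is met, the witness already lies in $M$, giving $M \prec_{\Sigma_k} K$ for all $k$, i.e.\ $\COF{k}$ for every $k$. The subtlety of this clause is to secure full $\Sigma_k$-elementarity of cofinal extensions in both directions and for all $k$ without thereby forcing $\mathbf{I}\Sigma_{n+1}$ (the frozen cut $I$ guarantees this), with the uncountable cofinality doing the essential work of capturing all bounded witnesses at countable stages; but the genuinely hard part of the whole theorem remains the impossibility in Clause~1, valid uniformly for every countable model.
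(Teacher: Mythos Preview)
The paper does not contain a proof of this theorem. It is stated as an announced result of Sun and Wong (``Recently, the following interesting theorem is announced by Mengzhou Sun and Tin Lok Wong''), and the paper then uses it as a black box to derive Propositions~\ref{MW2} and~\ref{cofCOF3}. So there is no proof in the paper to compare your proposal against.

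As for your sketch itself: it is a plausible outline, and the coherence checks you perform (that Clause~3 forces countability in Clause~1, that $\neg\mathbf{I}\Sigma_{n+1}$ survives along $\prec_{\Sigma_{n+2}}$-extensions via preservation of the definable cut, that Theorem~\ref{BW} keeps $\mathbf{B}\Sigma_{n+1}+\exp$ along the chains) are all correct and relevant. But the proposal is explicitly a plan rather than a proof: for Clause~1 you yourself flag that ``pinning down the precise $\Pi_{n+3}$ matrix and verifying the omitting condition is the delicate heart of the argument'' without actually doing so, and for Clauses~2 and~3 the crucial step---showing that the required bounded $\Sigma_{n+3}$ (resp.\ $\Sigma_k$) demands can always be realised in a further $\Delta_0$-elementary cofinal extension without destroying the ambient theory---is asserted rather than established. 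These are exactly the places where the Sun--Wong argument presumably does real work, and your sketch does not supply it.
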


The following proposition is obtained from the first clause of Theorem \ref{MW}. 

\begin{prop}\label{MW2}
    For $n \geq 0$ and any countable model $M \models \mathbf{I}\Delta_0 + \exp$, if $M$ satisfies $\cof{n+3}$, then $M \models \mathbf{B}\Sigma_{n+1}$. 
\end{prop}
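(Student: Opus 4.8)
The plan is to argue by induction on $n$, reducing the target $M \models \mathbf{B}\Sigma_{n+1}$ via Theorem \ref{MT2} to the two conditions $\eex{n}$ and $\cof{n+2}$, and using the first clause of Theorem \ref{MW} (in contrapositive form) to supply the missing induction. The only purely formal ingredient I will use repeatedly is the monotonicity $\cof{n+3} \Rightarrow \cof{n+2}$, valid because every $\Sigma_{n+2}$ formula is in particular a $\Sigma_{n+3}$ formula.

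For the base case $n = 0$, suppose $M \models \mathbf{I}\Delta_0 + \exp$ is countable and satisfies $\cof{3}$. Then $M$ satisfies $\cof{2}$, and $M$ trivially satisfies $\eex{0}$, so Theorem \ref{MT2} immediately gives $M \models \Coll(\Sigma_1)$, that is, $M \models \mathbf{B}\Sigma_1$. Note this case needs neither $\exp$ nor countability.

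For the inductive step, fix $n \geq 1$, assume the statement for $n-1$, and let $M \models \mathbf{I}\Delta_0 + \exp$ be countable with $\cof{n+3}$. First, $M$ satisfies $\cof{n+2} = \cof{(n-1)+3}$, so the induction hypothesis yields $M \models \mathbf{B}\Sigma_n$. Next I invoke the first clause of Theorem \ref{MW} at index $n-1$: it asserts that a countable model of $\mathbf{B}\Sigma_n + \exp + \neg \mathbf{I}\Sigma_n$ fails $\cof{n+3}$; contrapositively, since $M \models \mathbf{B}\Sigma_n + \exp$ is countable and does satisfy $\cof{n+3}$, we must have $M \models \mathbf{I}\Sigma_n$. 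By Corollary \ref{CorMT1} this means $M$ satisfies $\eex{n}$, and combining this with $\cof{n+2}$ in Theorem \ref{MT2} gives $M \models \Coll(\Sigma_{n+1})$, i.e. $M \models \mathbf{B}\Sigma_{n+1}$, completing the induction.

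The substance of the argument is concentrated in the single application of Sun and Wong's theorem, which closes the gap between $\mathbf{B}\Sigma_n$ (obtained from the induction hypothesis) and $\mathbf{I}\Sigma_n$ (needed to activate $\eex{n}$); every remaining step is bookkeeping with Theorem \ref{MT2} and Corollary \ref{CorMT1}. The point to watch is the index shift: Theorem \ref{MW} at parameter $m$ speaks about $\cof{m+4}$, so to reach $\cof{n+3}$ one sets $m = n-1$, which requires $n \geq 1$ and thereby forces the separate, easier base case $n = 0$ in which no such upgrade from $\mathbf{B}\Sigma_n$ to $\mathbf{I}\Sigma_n$ is needed.
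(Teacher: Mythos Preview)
Your proof is correct and follows essentially the same approach as the paper: induction on $n$, with the base case handled directly by Theorem \ref{MT2}, and the inductive step using the contrapositive of Theorem \ref{MW} (clause 1) to upgrade $\mathbf{B}\Sigma_n$ to $\mathbf{I}\Sigma_n$, then Corollary \ref{CorMT1} and Theorem \ref{MT2} to finish. The only difference is cosmetic indexing (you prove the claim at $n$ assuming $n-1$, whereas the paper proves it at $n+1$ assuming $n$), and you are more explicit about the monotonicity $\cof{n+3} \Rightarrow \cof{n+2}$ that the paper leaves implicit.
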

\begin{proof}
    We prove the proposition by induction on $n$. 
    The case of $n = 0$ follows from Theorem \ref{MT2}.  
    We suppose that the statement holds for $n$, and let $M$ be any countable model of $M \models \mathbf{I}\Delta_0 + \exp$ satisfying $\cof{n+4}$. 
    By the induction hypothesis, $M \models \mathbf{B}\Sigma_{n+1} + \exp$.
    So, by Theorem \ref{MW}, we obtain that $M \models \mathbf{I}\Sigma_{n+1}$. 
    By Corollary \ref{CorMT1}, $M$ satisfies $\eex{n+1}$, and so by Theorem \ref{MT2}, $M \models \mathbf{B}\Sigma_{n+2}$. 
\end{proof}

Proposition \ref{MW2} gives us the following affirmative answer to Problem \ref{preserveProb} in the case that $M$ is a countable model of $\mathbf{I} \Delta_0 + \exp$.  

\begin{prop}\label{cofCOF3}
For $n \geq 0$ and countable $M \models \mathbf{I}\Delta_0 + \exp$, we have that $\cof{n+4}$ and $\COF{n + 4}$ are equivalent. 
\end{prop}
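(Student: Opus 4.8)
The plan is to prove $\COF{n+4}$ and $\cof{n+4}$ equivalent for countable $M \models \mathbf{I}\Delta_0 + \exp$ by using the newly established Proposition \ref{MW2} to bootstrap $M$ up to $\mathbf{B}\Sigma_{n+1}$, and then invoke Proposition \ref{cofCOF2}. First I would note that one direction is free: since every model satisfying $\COF{k}$ also satisfies $\cof{k}$ (as remarked immediately after the definition of these conditions), the implication $\COF{n+4} \Rightarrow \cof{n+4}$ holds trivially. So the real content is $\cof{n+4} \Rightarrow \COF{n+4}$.

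For the nontrivial direction, I would assume $M \models \mathbf{I}\Delta_0 + \exp$ is countable and satisfies $\cof{n+4}$. By Proposition \ref{MW2} applied with the index shifted appropriately (writing $n+4 = (n+1)+3$), satisfaction of $\cof{n+4}$ yields $M \models \mathbf{B}\Sigma_{n+2}$. In particular $M \models \mathbf{B}\Sigma_{(n+1)+1} + \exp$, and by Theorem \ref{MT2} together with Corollary \ref{CorMT1} this means $M$ satisfies the condition $\eex{n+1}$. Now Proposition \ref{cofCOF2}, applied with the index $n$ replaced by $n+1$, states precisely that for $M \models \mathbf{I}\Delta_0 + \exp$ satisfying $\eex{n+1}$, the conditions $\cof{(n+1)+3} = \cof{n+4}$ and $\COF{(n+1)+3} = \COF{n+4}$ are equivalent. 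Since $M$ satisfies both $\eex{n+1}$ and $\cof{n+4}$, we conclude that $M$ satisfies $\COF{n+4}$, as desired.

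The main thing to get right is the index bookkeeping: the statement to be proved is about the threshold $n+4$, whereas Propositions \ref{MW2} and \ref{cofCOF2} are phrased with a free parameter that must be instantiated at $n+1$ rather than at $n$. The key conceptual step — and the only place where genuine new information enters — is Proposition \ref{MW2}, which converts the a priori weak hypothesis $\cof{n+4}$ into the genuine collection scheme $\mathbf{B}\Sigma_{n+2}$; everything downstream of that is an application of the already-established equivalence between $\cof{}$ and $\COF{}$ under the $\eex{}$ hypothesis. I do not anticipate a serious obstacle here, since both of the heavy lemmas are already available in the excerpt; the proof is essentially a short chain of implications, and the only care required is to verify that the $\mathbf{I}\Delta_0 + \exp$ and countability hypotheses propagate correctly through each invoked result (they do, since all three cited statements carry exactly these hypotheses).
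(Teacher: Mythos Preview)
Your proof is correct and follows essentially the same route as the paper: use Proposition~\ref{MW2} (shifted to index $n+1$) to obtain enough collection/induction to guarantee $\eex{n+1}$, then invoke Proposition~\ref{cofCOF2} at index $n+1$. The only cosmetic difference is that the paper states the conclusion of the first step as $M \models \mathbf{I}\Sigma_{n+1}$ and then cites Corollary~\ref{CorMT1} directly for $\eex{n+1}$, whereas you record $\mathbf{B}\Sigma_{n+2}$ and extract $\eex{n+1}$ via Theorem~\ref{MT2}; both are valid and amount to the same thing.
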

\begin{proof}
    Suppose that $M \models \mathbf{I}\Delta_0 + \exp$ is countable and satisfies $\cof{n+4}$. 
    By Proposition \ref{MW2}, $M \models \mathbf{I}\Sigma_{n+1}$. 
    By Corollary \ref{CorMT1}, $M$ satisfies $\eex{n+1}$. 
    Then, by Proposition \ref{cofCOF2}, $M$ satisfies $\COF{n+4}$. 
\end{proof}

The situation of the implications on properties for countable models of $\mathbf{I}\Delta_0 + \exp$ is visualized in Figure \ref{Fig3}. 
The second clause of Theorem \ref{MW} together with the facts that $\mathbf{B}\Sigma_{n+1} + \exp \nvdash \mathbf{I}\Sigma_{n+1}$ and $\mathbf{I}\Sigma_{n} + \exp \nvdash \mathbf{B} \Sigma_{n+1}$ shows that no more arrows can be added to the diagram. 
Also, the countability of models cannot be removed in Figure \ref{Fig3} because of the third clause of Theorem \ref{MW}. 

\begin{figure}[ht]
\centering
\begin{tikzpicture}
\node (B1) at (-3, 0) {$\mathbf{B}\Sigma_{1}$};
\node (c2) at (0, 0) {$\cof{2}$};
\node (C2) at (3, 0) {$\COF{2}$};
\node (I1) at (-3, 1) {$\mathbf{I}\Sigma_{1}$};
\node (e1) at (-1.5, 1) {$\eex{1}$};
\node (c3) at (1.5, 1) {$\cof{3}$};
\node (C3) at (3, 1) {$\COF{3}$};
\node (B2) at (-3, 2) {$\mathbf{B}\Sigma_{2}$};
\node (e1c3) at (0, 2) {$\eex{1}$\ \&\ $\cof{3}$};
\node (I2) at (-3, 3) {$\mathbf{I}\Sigma_{2}$};
\node (e2) at (-1.5, 3) {$\eex{2}$};
\node (c4) at (1.5, 3) {$\cof{4}$};
\node (C4) at (3, 3) {$\COF{4}$};
\node (B3) at (-3, 4) {$\mathbf{B}\Sigma_{3}$};
\node (e2c4) at (0, 4) {$\eex{2}$\ \&\ $\cof{4}$};

\draw [<->, double] (B1)--(c2);
\draw [<->, double] (c2)--(C2);

\draw [->, double] (e1)--(c2);
\draw [->, double] (c3)--(c2);

\draw [<->, double] (I1)--(e1);
\draw [<->, double] (c3)--(C3);

\draw [->, double] (e1c3)--(e1);
\draw [->, double] (e1c3)--(c3);

\draw [<->, double] (B2)--(e1c3);

\draw [->, double] (e2)--(e1c3);
\draw [->, double] (c4)--(e1c3);

\draw [<->, double] (I2)--(e2);
\draw [<->, double] (c4)--(C4);

\draw [->, double] (e2c4)--(e2);
\draw [->, double] (e2c4)--(c4);

\draw [<->, double] (B3)--(e2c4);

\draw [->, double] (-1.1, 4.7)--(e2c4);
\draw [->, double] (1.1, 4.7)--(e2c4);

\end{tikzpicture}
\caption{Implications for countable models of $\mathbf{I} \Delta_0 + \exp$}\label{Fig3}
\end{figure}

\section*{Acknowledgement}

We would like to thank Tin Lok Wong for giving us a lot of valuable information on recent developments.
We would also like to thank Mengzhou Sun for his helpful comment. 
The first author was supported by JSPS KAKENHI Grant Numbers JP19K14586 and JP23K03200.

\bibliographystyle{plain}
\bibliography{ref}

\begin{thebibliography}{10}

\bibitem{Clot}
Peter~G. Clote.
\newblock Partition relations in arithmetic.
\newblock In {\em Methods in mathematical logic ({C}aracas, 1983)}, volume 1130
  of {\em Lecture Notes in Math.}, pages 32--68. Springer, Berlin, 1985.

\bibitem{CFL}
Andr\'{e}s Cord\'{o}n-Franco, Alejandro Fern\'{a}ndez-Margarit, and
  F.~F\'{e}lix Lara-Mart\'{\i}n.
\newblock Fragments of arithmetic and true sentences.
\newblock {\em MLQ. Mathematical Logic Quarterly}, 51(3):313--328, 2005.

\bibitem{Gaif}
Haim Gaifman.
\newblock A note on models and submodels of arithmetic.
\newblock In {\em Conference in {M}athematical {L}ogic---{L}ondon '70 ({P}roc.
  {C}onf., {B}edford {C}oll., {L}ondon, 1970)}, volume 255 of {\em Lecture
  Notes in Math.}, pages 128--144. Springer, Berlin-New York, 1972.

\bibitem{HP}
Petr H\'{a}jek and Pavel Pudl\'{a}k.
\newblock {\em Metamathematics of first-order arithmetic}.
\newblock Perspectives in Mathematical Logic. Springer-Verlag, Berlin, 1993.

\bibitem{Kaye91}
Richard Kaye.
\newblock Model-theoretic properties characterizing {P}eano arithmetic.
\newblock {\em The Journal of Symbolic Logic}, 56(3):949--963, 1991.

\bibitem{Kaye}
Richard Kaye.
\newblock {\em Models of {P}eano arithmetic}, volume~15 of {\em Oxford Logic
  Guides}.
\newblock The Clarendon Press, Oxford University Press, New York, 1991.
\newblock Oxford Science Publications.

\bibitem{KPD}
Richard Kaye, Jeffrey~Bruce Paris, and Constantine Dimitracopoulos.
\newblock On parameter free induction schemas.
\newblock {\em The Journal of Symbolic Logic}, 53(4):1082--1097, 1988.

\bibitem{Mija}
\v{Z}arko Mijajlovi\'{c}.
\newblock Submodels and definable points in models of {P}eano arithmetic.
\newblock {\em Notre Dame Journal of Formal Logic}, 24(4):417--425, 1983.

\bibitem{PK}
Jeffrey~Bruce Paris and L.~A.~S. Kirby.
\newblock {$\Sigma \sb{n}$}-collection schemas in arithmetic.
\newblock In {\em Logic {C}olloquium '77 ({P}roc. {C}onf., {W}roc\l aw, 1977)},
  volume~96 of {\em Stud. Logic Found. Math.}, pages 199--209. North-Holland,
  Amsterdam-New York, 1978.

\bibitem{Pars}
Charles Parsons.
\newblock On a number theoretic choice schema and its relation to induction.
\newblock In {\em Intuitionism and {P}roof {T}heory ({P}roc. {C}onf.,
  {B}uffalo, {N}.{Y}., 1968)}, Stud. Logic Found. Math., pages 459--473.
  North-Holland, Amsterdam-London, 1970.

\bibitem{WP}
A.~Wilkie and J.~Paris.
\newblock On the existence of end extensions of models of bounded induction.
\newblock In {\em Logic, methodology and philosophy of science, {VIII}
  ({M}oscow, 1987)}, volume 126 of {\em Stud. Logic Found. Math.}, pages
  143--161. North-Holland, Amsterdam, 1989.

\end{thebibliography}

\end{document}